\def\la{\leftarrow}
\def\ra{\rightarrow}
\def\ua{\uparrow}
\def\da{\downarrow}
\newtheorem{Theorem}{Theorem}[section]
\newtheorem{Corollary}[Theorem]{Corollary}
\newtheorem{Lemma}[Theorem]{Lemma}
\newtheorem{Definition}[Theorem]{Definition}
\newtheorem{appxlem}{Lemma}[section]
\numberwithin{equation}{section}
\newcommand{\N}{{\mathbb N}}
\newcommand{\beq}{\begin{equation}}
\newcommand{\eeq}{\end{equation}}
\def\mystrut{\hbox{\vrule height12.0pt depth2pt width0pt}}
\def\mybox{\hbox to 12.0pt}
\def\norulefill{\leaders\hrule height0pt\hfill}
\def\nr#1{\multispan{#1}\norulefill}
\def\hr#1{\multispan{#1}\hrulefill}
\def\mybigbox{\hbox to 35.0pt}
\def\myverybigbox{\hbox to 60.0pt}
\def\sc{\scriptstyle}
\def\ds{\displaystyle}    
\def\ov{\overline}
\def\p{{\bf{d}}}
\def\s{{\bf{s}}}
\def\WE{\textrm{WE}}
\def\NS{\textrm{NS}}
\def\NE{\textrm{NE}}
\def\SE{\textrm{SE}}
\def\NW{\textrm{NW}}
\def\SW{\textrm{SW}}
\def\XY{\textrm{XY}}
\def\wgt{{\rm wgt}}
\def\neg{{\rm neg}}
\def\ch{{\rm ch}\,}
\def\x{\boldsymbol{x}}
\def\y{\boldsymbol{y}}
\def\z{\boldsymbol{z}}
\def\q{\boldsymbol{q}}
\def\t{\boldsymbol{t}}
\def\u{\boldsymbol{u}}
\def\a{\boldsymbol{a}}
\def\b{\boldsymbol{b}}
\def\c{\boldsymbol{c}}
\def\1{\boldsymbol{1}}
\begin{document}


\title{Half-turn symmetric alternating sign matrices and Tokuyama type factorisation
for orthogonal group characters}

\author{Ang\`ele M. Hamel\thanks{e-mail: ahamel@wlu.ca} \\
Department of Physics and Computer Science,\\
Wilfrid Laurier University, 
Waterloo, Ontario N2L 3C5, Canada\\
\\
and \\
\\
Ronald C. King\thanks{e-mail: R.C.King@maths.soton.ac.uk} \\
School of Mathematics, University of Southampton, \\
Southampton SO17 1BJ, England \\
}

\maketitle

\begin{abstract}
Half-turn symmetric alternating sign matrices (HTSASMs) are special variations of the well-known alternating sign matrices which have a long and fascinating history.  HTSASMs are interesting combinatorial objects in their own right and have been the focus of recent study.  Here we explore counting weighted HTSASMs with respect to a number of statistics to derive an orthogonal group version of Tokuyama's factorisation formula, which involves a deformation and expansion of Weyl's denominator formula multiplied by a general linear group character.  Deformations of  Weyl's original denominator formula to other root systems have been discovered by Okada and Simpson, and it is thus natural to ask for versions of Tokuyama's  factorisation formula involving 
other root systems. Here we obtain such a formula involving a deformation of Weyl's denominator formula for the  orthogonal group multiplied by a deformation of an orthogonal group character.
\end{abstract}


\section{Introduction}
\label{sec:in}

Half-turn symmetric alternating sign matrices (HTSASMs) are $N \times N$ matrices that are invariant under a 180 degree rotation and that contain as entries only $0, 1,$ or $-1$, arranged such that each row and column sums to $1$, while all the partial row and column sums are either $1$ or $0$. HTSASMs are a variation on the well-known alternating sign matrices which have a long and fascinating history  \cite{Bressoud}. HTSASMs are interesting combinatorial objects in their own right, and while they occur in some of the earliest papers on ASMs (Robbins \cite{Robbins}, Kuperberg \cite{Kuperberg}, Okada \cite{Okada}), they are also the focus of recent study: see, for example, \cite{Aval, Kuperberg, Okada, Razumov, Razumov1, Tabony}, among others.  Here we take a view closest to that of Okada \cite{Okada} and Simpson \cite{Simpson} and explore counting HTSASMs weighted with respect to a number of statistics to derive generalisations of deformations of Weyl's denominator formula in what Okada called the $B_n$ case and in what Simpson called the $B_n'$ case.

In order to do this we go further and derive corresponding generalisations of Tokuyama's identity~\cite{Tokuyama}. 
Taking advantage of the bijective correspondence between strict Gelfand-Tsetlin patterns with top row specified by a strict partition $\lambda$ 
of length $n$ and corresponding generalisations of ASMs known as $\lambda$-ASMs~\cite{Okada}, Tokuyama's identity can be expressed 
in the form:
\begin{equation}\label{eqn-tok-lambda}
          \sum_{A\in{\cal A}^\lambda_n}\ \wgt(A)  =  \sum_{A\in{\cal A}^\delta_n}\ \wgt(A)  \  \  s_\mu(\x)
\end{equation}
with
\begin{equation}\label{eqn-tok-delta}
           \sum_{A\in{\cal A}^\delta_n}\ \wgt(A)  = \prod_{1\leq i<j\leq n} (1 + t\, x_ix_j^{-1} )
\end{equation}
where $t$ is an arbitrary parameter embodied in each summation through the $t$-dependent specification of $\wgt(A)$ provided by Tokuyama.
In these equations $\lambda=\mu+\delta$,with $\delta$ the strict partition $(n,n-1,\ldots,1)$ and $\mu$ an ordinary partition with no
more than $n$ non-zero parts. The Schur function $s_\mu(x)$ is none other than the character of the irreducible representation of 
the general linear group $GL(n)$ of highest weight $\mu$ evaluated for a group element with eigenvalues $\x=(x_1,x_2,\ldots,x_n)$. 

The identity (\ref{eqn-tok-delta}) represents a deformation of Weyl's denominator 
formula for the reductive Lie algebra $gl(n)$ of the general linear group $GL(n)$ in the sense that on setting $t=-1$ 
and $x_i=e^{-\epsilon_i}$ for $i=1,2,\ldots,n$ one recovers Weyl's two formulae. In particular with this specialisation
the right hand side of (\ref{eqn-tok-delta}) takes the form $\prod_{\alpha\in\Delta_+} (1-e^{-\alpha})$ in which $\Delta_+$ is the set of 
all positive roots of $gl(n)$, that is $\alpha=\epsilon_i-\epsilon_j$ for $1\leq i<j\leq n$.
Similarly, the identity (\ref{eqn-tok-lambda}) represents a deformation of Weyl's character formula 
for $gl(n)$ since it can be shown that the two summations each reduce to sums over elements in the Weyl group of $gl(n)$;
that is, the symmetric group $S_n$. 

Deformations of  Weyl's denominator formula for other root systems analogous to (\ref{eqn-tok-delta})
were discovered by Okada \cite{Okada} and Simpson \cite{Simpson}, 
see for example Theorems ~\ref{the-Okada} and ~\ref{the-Simpson} below.
It is therefore natural to ask for analogues of Tokuyama's identity (\ref{eqn-tok-lambda})
for other root systems.
Some cases are already known, and previous work includes  \cite{Brubaker1}, \cite{Brubaker2}, \cite{Brubaker3}, \cite{Chinta}, \cite{HKWeyl}.

The impetus for our approach to this problem comes from the 2010  BIRS workshop ``Whittaker Functions, Crystal Bases, and Quantum Groups'' at which Ben Brubaker asked the first author whether we could discover a Tokuyama type factorisation formula based
on orthogonal group shifted tableaux
along the same lines as our previous work \cite{HKWeyl} on symplectic shifted tableaux.
He also made us aware of Tabony's~\cite{Tabony} progress in this direction using HTSASMs,
see Theorem~\ref{the-Tabony} below. 
From this query we developed a number of conjectures~\cite{HKFPSAC} for certain
$B_n,\; B'_n, \; C_n,\; C'_n,\; D_n,$ and $D'_n$ cases, some of which will be the subject of forthcoming papers,
while here we present proofs of our 
Tokuyama type identities 
in the cases $B_n$ and $B'_n$ associated with the root system of the Lie algebra of the orthogonal group $SO(2n+1)$.

The paper is organized as follows:  
Section~\ref{sec:htsasm} covers  definitions of the various types of ASMs including $\lambda$-HTSASMs. A number of known theorems regarding weighted sums of
these ASMs are gathered together in Section~\ref{sec:ASMweights} together with the statement of a new multi-parameter theorem, Theorem~\ref{the-result1},
whose proof will emerge only later. To establish this proof it is convenient to exploit two new combinatorial constructs; namely primed shifted tableaux and certain corresponding sets of non-intersecting lattice paths. These are introduced in Section~\ref{sec:PSTandLPs} and then appropriately weighted in
Section~\ref{sec:PSTandLPwgts}. Section~\ref{sec:LPresult} contains a lattice path proof of an intermediate theorem, Theorem~\ref{the-Pdet}, involving a determinant that is explicitly evaluated in Section~\ref{sec:det}. Following some preliminaries on Schur functions and orthogonal group characters in Section~\ref{sec:characters},
our key results are established in Section~\ref{sec:Tok} in the form of Theorems~\ref{the-result2} and~\ref{the-result3}. In  Section \ref{sec:corollaries} it is shown that corollaries of the main results include extensions of Tokuyama's factorisation identities (\ref{eqn-tok-lambda}) and (\ref{eqn-tok-delta}) 
to the ASMs of the $B'_n$ and $B_n$ cases, each case involving a deformation of an orthogonal group character. Further corollaries also include the theorems of Okada, Simpson, Tabony and our own Theorem~\ref{the-result1}. In a final Addendum it is shown that the universal weighting scheme of Brubaker and Schultz~\cite{BrubakerSchultz} can not only be accommodated in our formalism, but can be analysed in such a way that a hitherto unidentified quotient also emerges as a deformed character of the orthogonal group.   An Appendix provides an alternative derivation of one of the lemmas quoted in Section~\ref{sec:det}.

\section{Half-turn symmetric ASMs}
\label{sec:htsasm}


The setting for this work is a particular type of alternating sign matrix (ASM), variously described either as an ASM invariant under 180 degree rotation (Okada~\cite{Okada}) or as a half-turn symmetric ASM (Robbins~\cite{Robbins}, Kuperberg~\cite{Kuperberg}, Tabony~\cite{Tabony}).   
Using this perspective we can consider several types of HTSASMs, depending on whether the number of rows is even or odd, and depending on restrictions on the central row or column.  However, in this paper we consider only two types: 
even sided $2n\times 2n$ HTSASMs, and odd-sided $(2n+1)\times(2n+1)$ HTSASMs with a special central column consisting of a $1$ in the central $(n+1,n+1)$ position and zeroes elsewhere.
Since there is rotational symmetry we can reconstruct the entire ASM from its rightmost $n$ columns
Thus we will often use the right hand portion 
of the matrix, with a 
turn on the left hand boundary, so that for $i=1,2,\ldots,n$ row $i$ 
read from right to left continues as row $N+1-i$ read from left to right with $N=2n$ or $2n+1$ as appropriate.   

The following are examples of the two types:
{\bf Type ${\cal B}_n$} Even-sided half-turn symmetric ASM (Okada \cite{Okada}, Tabony  \cite{Tabony});
and {\bf Type ${\cal B}'_n$} Odd-sided half-turn symmetric ASM (special central column)~[Simpson~\cite{Simpson}].
Here and elsewhere, for alignment purposes, we represent an ASM entry $-1$ by $\ov1$.
\begin{equation}
\begin{array}{ccc}
\mbox{Type $A\in{\cal B}_3$:}&\qquad&\mbox{Type $A\in{\cal B}'_2$:}\cr\cr
\left[
\begin{array}{cccccc}
0 & 0 & 0  & 1 & 0 & 0 \\
0 & 1 & 0 &\ov1 & 0 & 1 \\
0 & 0 & 1 &  0 & 0 & 0\\
0 & 0 & 0 & 1 & 0 & 0  \\
1 & 0 &\ov1 & 0 & 1 & 0 \\
0 & 0 & 1 & 0 & 0 & 0 \\
\end{array}
\right]
&&
\left[
\begin{array}{ccccc}
0 & 1 & \bf0 & 0 & 0 \\
0 & 0 & \bf0 & 1 & 0 \\
1 & \ov{1} & \bf1 & \ov{1} & 1\\
0 & 1 &\bf0 & 0 & 0 \\
0 & 0 &\bf0 & 1 & 0\\
\end{array}
\right]
\end{array}
\label{asm-BnB'n}
\end{equation}

Just like ordinary ASMs \cite{Bressoud,HKUturn}, HTSASMs can also be realised 
as compass points matrices (CPMs). While entries $1$ and $-1$ in an ASM are mapped to $\WE$ and $\NS$, respectively, 
in the corresponding CPM, an entry $0$ in an ASM is mapped to one or other of the CPM entries $\NE$, $\SE$,
$\NW$ or $\SW$ in accordance with the compass point arrangements of their nearest non-vanishing neighbours as illustrated below:
\beq
\begin{array}{ccccccccccc} 
    \ov1   &    &    1     &      &    \ov1   &     &   1     &    &   \ov1   &    & 1\cr
\ov1~~{\bf1}~~\ov1&\ & 1~~{\bf\ov1}~~1 &\ &  \ov1~~{\bf0}~~1 &\ &1~~{\bf0}~~\ov1 &\ & 1~~{\bf0}~~\ov1&\ &\ov1~~{\bf0}~~1\cr
    \ov1   &    &    1     &      &   1       &     &   \ov1  &    &    1     &    &    \ov1 \cr
\cr
     \WE &    &\NS  &    &\SW      &    &\NE    &    &\SE     &    &\NW     \cr
\end{array}
\label{fig0}
\eeq

In the case of HTSASMs one can confine attention to the right hand portion. 
Doing this in the case of the above two examples, one obtains:
\begin{equation}
\begin{array}{cc}
\left[
\begin{array}{ccc}
 1 & 0 & 0 \\
\ov1 & 0 & 1 \\
 0 & 0 & 0\\
 1 & 0 & 0  \\
 0 & 1 & 0 \\
 0 & 0 & 0 \\
\end{array}
\right]
\mapsto
\left[
\begin{array}{cccccc}
 \sc{\WE} & \sc{\SE} & \sc{\SE} \\
 \sc{\NS} & \sc{\SW} & \sc{\WE} \\
 \sc{\SE} & \sc{\SE} & \sc{\NE}\\
 \sc{\WE} & \sc{\SE} & \sc{\NE}  \\
 \sc{\NW} & \sc{\WE} & \sc{\NE} \\
 \sc{\NE} & \sc{\NE} & \sc{\NE} \\
\end{array}
\right]
&
\left[
\begin{array}{ccccc}
 0 & 0 \\
 1 & 0 \\
 \ov{1} & 1\\
 0 & 0 \\
  1 & 0\\
\end{array}
\right]
\mapsto
\left[
\begin{array}{ccccc}
 \sc{\SE} & \sc{\SE} \\
 \sc{\WE} & \sc{\SE} \\
 \sc{\NS} & \sc{\WE}\\
 \sc{\SE} & \sc{\NE} \\
 \sc{\WE} & \sc{\NE}\\
\end{array}
\right]
\end{array}
\label{cpm-BnB'n}
\end{equation}

The compass point matrices are equivalent to the square ice configurations (SICs) 
that arise in the six vertex model in physics. Here there is a two--dimensional grid 
consisting of vertices and directed edges.  Each vertex has four edges, and the directions 
of the edges are such that only six different in-out arrangements 
are permitted.  Each of these types of vertex arrangement 
is in one-to-one correspondence with a type of entry in the ASM, and each 
can be uniquely labeled by the compass point directions of its two incoming edges. 
These labels are precisely those specifying the corresponding entry in the CPM.

\begin{equation}\label{fig1}
\vcenter{\hbox{
\begin{tikzpicture}[x={(0in,-0.2in)},y={(0.2in,0in)}] 
\foreach \j in {0,...,5}
\draw(1,2+3*\j)node{$\bullet$};
\draw(3,2)node{$1$};
\draw(3,5)node{$-1$};
\draw(3,8)node{$0$};
\draw(3,11)node{$0$};
\draw(3,14)node{$0$};
\draw(3,17)node{$0$}; 
\draw(4,2)node{$\WE$};
\draw(4,5)node{$\NS$};
\draw(4,8)node{$\SW$};
\draw(4,11)node{$\NE$};
\draw(4,14)node{$\SE$};
\draw(4,17)node{$\NW$}; 

\draw[very thick,->] (1,1)to(1,1.75); \draw[very thick] (1,1.75)to(1,2);
\draw[very thick] (1,2)to(1,2.25); \draw[very thick,<-] (1,2.25)to(1,3);
\draw[very thick](0,2)to(0.25,2); \draw[very thick,<-] (0.25,2)to(1,2);
\draw[very thick,->](1,2)to(1.75,2); \draw[very thick] (1.75,2)to(2,2);

\draw[very thick] (1,1+3)to(1,1.25+3); \draw[very thick,<-] (1,1.25+3)to(1,2+3);
\draw[very thick,->] (1,2+3)to(1,2.75+3); \draw[very thick] (1,2.75+3)to(1,3+3);
\draw[very thick,->](0,2+3)to(0.75,2+3); \draw[very thick] (0.75,2+3)to(1,2+3);
\draw[very thick](1,2+3)to(1.25,2+3); \draw[very thick,<-] (1.25,2+3)to(2,2+3);

\draw[very thick,->] (1,1+6)to(1,1.75+6); \draw[very thick] (1,1.75+6)to(1,2+6);
\draw[very thick,->] (1,2+6)to(1,2.75+6); \draw[very thick] (1,2.75+6)to(1,3+6);
\draw[very thick](0,2+6)to(0.25,2+6); \draw[very thick,<-] (0.25,2+6)to(1,2+6);
\draw[very thick](1,2+6)to(1.25,2+6); \draw[very thick,<-] (1.25,2+6)to(2,2+6);

\draw[very thick] (1,1+9)to(1,1.25+9); \draw[very thick,<-] (1,1.25+9)to(1,2+9);
\draw[very thick] (1,2+9)to(1,2.25+9); \draw[very thick,<-] (1,2.25+9)to(1,3+9);
\draw[very thick,->](0,2+9)to(0.75,2+9); \draw[very thick] (0.75,2+9)to(1,2+9);
\draw[very thick,->](1,2+9)to(1.75,2+9); \draw[very thick] (1.75,2+9)to(2,2+9);

\draw[very thick] (1,1+12)to(1,1.25+12); \draw[very thick,<-] (1,1.25+12)to(1,2+12);
\draw[very thick] (1,2+12)to(1,2.25+12); \draw[very thick,<-] (1,2.25+12)to(1,3+12);
\draw[very thick](0,2+12)to(0.25,2+12); \draw[very thick,<-] (0.25,2+12)to(1,2+12);
\draw[very thick](1,2+12)to(1.25,2+12); \draw[very thick,<-] (1.25,2+12)to(2,2+12);

\draw[very thick,->] (1,1+15)to(1,1.75+15); \draw[very thick] (1,1.75+15)to(1,2+15);
\draw[very thick,->] (1,2+15)to(1,2.75+15); \draw[very thick] (1,2.75+15)to(1,3+15);
\draw[very thick,->](0,2+15)to(0.75,2+15); \draw[very thick] (0.75,2+15)to(1,2+15);
\draw[very thick,->](1,2+15)to(1.75,2+15); \draw[very thick] (1.75,2+15)to(2,2+15);
\end{tikzpicture}
}}
\end{equation}

For a given HTSASM and its associated CPM, combining the designated vertex arrangements in a rectangular array 
yields a U-turn square ice configuration 
that is characterised by curving 
the edges on the left hand boundary in such a way that they join row $i$ to row $N-i+1$ 
for $i=1,2,\ldots,n$ in the even and odd rowed cases $N=2n$ and $N=2n+1$, as illustrated below 
in Figure~\ref{fig-ACS-delta}.

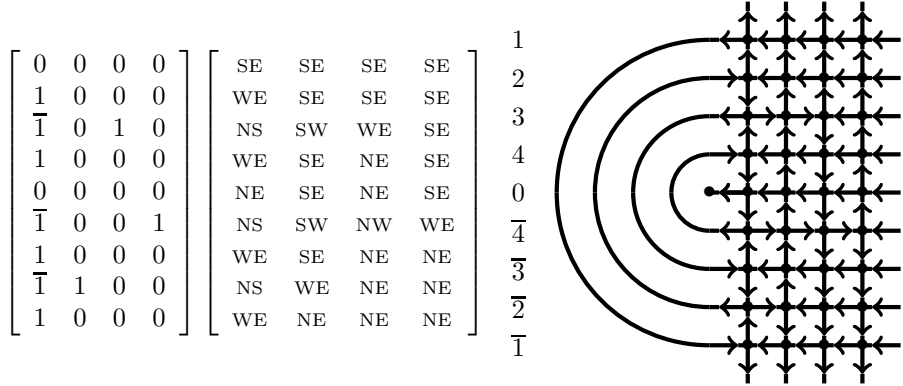
\begin{figure*}[htbp]
\begin{center}
\begin{equation*}\hskip-1ex
\begin{array}{c}
\left[\begin{array}{cccc}
0 & 0 & 0 & 0\\
1 & 0 & 0 & 0 \\
\ov1 & 0 & 1 & 0\\
1 & 0 & 0 & 0\\
0 & 0 & 0 & 0\\
\ov1 & 0 & 0 & 1\\
1 & 0 & 0 & 0\\
\ov1 & 1 & 0 &0 \\
1 & 0 & 0 & 0\\
\end{array}\right]
\left[\begin{array}{cccc}
\sc{\SE} &\sc{\SE} &\sc{\SE} &\sc{\SE}\\
\sc{\WE} &\sc{\SE} &\sc{\SE} &\sc{\SE}\\
\sc{\NS} &\sc{\SW} &\sc{\WE} &\sc{\SE}\\
\sc{\WE} &\sc{\SE} &\sc{\NE} &\sc{\SE}\\
\sc{\NE} &\sc{\SE} &\sc{\NE} &\sc{\SE}\\
\sc{\NS} &\sc{\SW} &\sc{\NW} &\sc{\WE}\\
\sc{\WE} &\sc{\SE} &\sc{\NE} &\sc{\NE}\\
\sc{\NS} &\sc{\WE} &\sc{\NE} &\sc{\NE}\\
\sc{\WE} &\sc{\NE} &\sc{\NE} &\sc{\NE}\\
\end{array}\right]
\!\!%
\begin{array}{c} 
\vcenter{\hbox{
\begin{tikzpicture}[x={(0in,-0.2in)},y={(0.2in,0in)}] 
\foreach \i in {1,...,9}
\foreach \j in {1,...,4}
\draw(\i,\j)node{$\bullet$};
\foreach \j in {1,...,4}
\draw(8,\j)node{$\bullet$};
\draw(1,-5)node{$1$};
\draw(2,-5)node{$2$};
\draw(3,-5)node{$3$};
\draw(4,-5)node{$4$};
\draw(5,-5)node{$0$}; 
\draw(6,-5)node{$\ov4$};
\draw(7,-5)node{$\ov3$};
\draw(8,-5)node{$\ov2$};
\draw(9,-5)node{$\ov1$};
\draw(5,0)node{$\bullet$};
\draw[ultra thick] (1,1)to(1,1.25); \draw[ultra thick, <-] (1,1.25)to(1,2);
\draw[ultra thick] (1,2)to(1,2.25); \draw[ultra thick,<-] (1,2.25)to(1,3);
\draw[ultra thick] (1,3)to(1,3.25); \draw[ultra thick,<-] (1,3.25)to(1,4);
\draw[ultra thick] (1,4)to(1,4.25); \draw[ultra thick,<-] (1,4.25)to(1,5);
\draw[ultra thick] (2,1)to(2,1.25);  \draw[ ultra thick,<-] (2,1.25)to(2,2);
\draw[ultra thick] (2,2)to(2,2.25);  \draw[ultra thick,<-] (2,2.25)to(2,3);
\draw[ultra thick] (2,3)to(2,3.25);  \draw[ ultra thick,<-] (2,3.25)to(2,4);
\draw[ultra thick] (2,4)to(2,4.25); \draw[ultra thick,<-] (2,4.25)to(2,5);
\draw[ultra thick, ->] (3,1)to(3,1.75);  \draw[ ultra thick] (3,1.75)to(3,2);
\draw[ultra thick, ->] (3,2)to(3,2.75);  \draw[ ultra thick] (3,2.75)to(3,3);
\draw[,ultra thick] (3,3)to(3,3.25);  \draw[ultra thick,<-] (3,3.25)to(3,4);
\draw[ultra thick] (3,4)to(3,4.25); \draw[ultra thick,<-] (3,4.25)to(3,5);
\draw[ultra thick] (4,1)to(4,1.25);  \draw[ ultra thick,<-] (4,1.25)to(4,2);
\draw[ultra thick] (4,2)to(4,2.25);  \draw[ ultra thick,<-] (4,2.25)to(4,3);
\draw[ultra thick] (4,3)to(4,3.25);  \draw[ ultra thick,<-] (4,3.25)to(4,4);
\draw[ultra thick] (4,4)to(4,4.25); \draw[ultra thick,<-] (4,4.25)to(4,5);
\draw[ultra thick] (5,1)to(5,1.25);  \draw[ultra thick,<-] (5,1.25)to(5,2);
\draw[ultra thick] (5,2)to(5,2.25);  \draw[ ultra thick,<-] (5,2.25)to(5,3);
\draw[ultra thick] (5,3)to(5,3.25);  \draw[ ultra thick,<-] (5,3.25)to(5,4);
\draw[ultra thick] (5,4)to(5,4.25); \draw[ultra thick,<-] (5,4.25)to(5,5);
 \draw[ultra thick,<-] (5,0.25)to(5,1);

\draw[ultra thick, ->] (6,1)to(6,1.75);  \draw[ultra thick] (6,1.75)to(6,2);
\draw[ultra thick, ->] (6,2)to(6,2.75);  \draw[ ultra thick] (6,2.75)to(6,3);
\draw[ultra thick, ->] (6,3)to(6,3.75);  \draw[ ultra thick] (6,3.75)to(6,4);
\draw[ultra thick] (6,4)to(6,4.25); \draw[ultra thick,<-] (6,4.25)to(6,5);

\draw[ultra thick] (7,1)to(7,1.25);  \draw[ ultra thick,<-] (7,1.25)to(7,2);
\draw[ultra thick] (7,2)to(7,2.25);  \draw[ ultra thick,<-] (7,2.25)to(7,3);
\draw[ultra thick] (7,3)to(7,3.25);  \draw[ ultra thick,<-] (7,3.25)to(7,4);
\draw[ultra thick] (7,4)to(7,4.25); \draw[ultra thick,<-] (7,4.25)to(7,5);

\draw[ultra thick, ->] (8,1)to(8,1.75);  \draw[ ultra thick] (8,1.75)to(8,2);
\draw[ultra thick] (8,2)to(8,2.25);  \draw[ ultra thick,<-] (8,2.25)to(8,3);
\draw[ultra thick] (8,3)to(8,3.25);  \draw[ ultra thick,<-] (8,3.25)to(8,4);
\draw[ultra thick] (8,4)to(8,4.25); \draw[ultra thick,<-] (8,4.25)to(8,5);

\draw[ultra thick] (9,1)to(9,1.25);  \draw[ ultra thick,<-] (9,1.25)to(9,2);
\draw[ultra thick] (9,2)to(9,2.25);  \draw[ ultra thick,<-] (9,2.25)to(9,3);
\draw[ultra thick] (9,3)to(9,3.25);  \draw[ ultra thick,<-] (9,3.25)to(9,4);
\draw[ultra thick] (9,4)to(9,4.25); \draw[ultra thick,<-] (9,4.25)to(9,5);

%
\draw[ultra thick](1,1)to(1.25,1); \draw[ultra thick,<-] (1.25,1)to(2,1);
\draw[ ultra thick](1,2)to(1.25,2); \draw[ultra thick,<-] (1.25,2)to(2,2);
\draw[ ultra thick](1,3)to(1.25,3); \draw[ ultra thick,<-] (1.25,3)to(2,3);
\draw[ ultra thick](1,4)to(1.25,4); \draw[ ultra thick,<-] (1.25,4)to(2,4);
\draw[ultra thick](0,1)to(0.25,1); \draw[ultra thick,<-] (0.25,1)to(1,1);
\draw[ultra thick](0,2)to(0.25,2); \draw[ultra thick,<-] (0.25,2)to(1,2);
\draw[ultra thick](0,3)to(0.25,3); \draw[ultra thick,<-] (0.25,3)to(1,3);
\draw[ultra thick](0,4)to(0.25,4); \draw[ultra thick,<-] (0.25,4)to(1,4);

\draw[ ultra thick,->](2,1)to(2.75,1); \draw[ultra thick] (2.75,1)to(3,1);
\draw[ ultra thick](2,2)to(2.25,2); \draw[ultra thick,<-] (2.25,2)to(3,2);
\draw[ ultra thick](2,3)to(2.25,3); \draw[ ultra thick,<-] (2.25,3)to(3,3);
\draw[ ultra thick](2,4)to(2.25,4); \draw[ ultra thick,<-] (2.25,4)to(3,4);
\draw[ ultra thick](3,1)to(3.25,1); \draw[ultra thick,<-] (3.25,1)to(4,1);
\draw[ultra thick](3,2)to(3.25,2); \draw[ultra thick,<-] (3.25,2)to(4,2);
\draw[ ultra thick,->](3,3)to(3.75,3); \draw[ ultra thick] (3.75,3)to(4,3);
\draw[ ultra thick](3,4)to(3.25,4); \draw[ ultra thick,<-] (3.25,4)to(4,4);
\draw[ultra thick,->](4,1)to(4.75,1); \draw[ ultra thick] (4.75,1)to(5,1);
\draw[ ultra thick](4,2)to(4.25,2); \draw[ultra thick,<-] (4.25,2)to(5,2);
\draw[ ultra thick,->](4,3)to(4.75,3); \draw[ ultra thick] (4.75,3)to(5,3);
\draw[ ultra thick](4,4)to(4.25,4); \draw[ ultra thick,<-] (4.25,4)to(5,4);
\draw[ ultra thick,->](5,1)to(5.75,1); \draw[ ultra thick] (5.75,1)to(6,1);
\draw[ultra thick](5,2)to(5.25,2); \draw[ ultra thick,<-] (5.25,2)to(6,2);
\draw[ ultra thick,->](5,3)to(5.75,3); \draw[ ultra thick] (5.75,3)to(6,3);
\draw[ ultra thick](5,4)to(5.25,4); \draw[ ultra thick,<-] (5.25,4)to(6,4);
\draw[ ultra thick](6,1)to(6.25,1); \draw[ultra thick,<-] (6.25,1)to(7,1);
\draw[ ultra thick](6,2)to(6.25,2); \draw[ ultra thick,<-] (6.25,2)to(7,2);
\draw[ ultra thick,->](6,3)to(6.75,3); \draw[ultra thick] (6.75,3)to(7,3);
\draw[ ultra thick,->](6,4)to(6.75,4); \draw[ ultra thick] (6.75,4)to(7,4);
\draw[ultra thick,->](7,1)to(7.75,1); \draw[ultra thick] (7.75,1)to(8,1);
\draw[ ultra thick](7,2)to(7.25,2); \draw[ ultra thick,<-] (7.25,2)to(8,2);
\draw[ultra thick,->](7,3)to(7.75,3); \draw[ ultra thick] (7.75,3)to(8,3);
\draw[ ultra thick,->](7,4)to(7.75,4); \draw[ ultra thick] (7.75,4)to(8,4);
\draw[ultra thick](8,1)to(8.25,1); \draw[ultra thick,<-] (8.25,1)to(9,1);
\draw[ ultra thick,->](8,2)to(8.75,2); \draw[ ultra thick] (8.75,2)to(9,2);
\draw[ultra thick,->](8,3)to(8.75,3); \draw[ ultra thick] (8.75,3)to(9,3);
\draw[ ultra thick,->](8,4)to(8.75,4); \draw[ ultra thick] (8.75,4)to(9,4);
\draw[ultra thick,->](9,1)to(9.75,1); \draw[ultra thick] (9.75,1)to(10,1);
\draw[ ultra thick,->](9,2)to(9.75,2); \draw[ ultra thick] (9.75,2)to(10,2);
\draw[ultra thick,->](9,3)to(9.75,3); \draw[ ultra thick] (9.75,3)to(10,3);
\draw[ ultra thick,->](9,4)to(9.75,4); \draw[ ultra thick] (9.75,4)to(10,4);
%
\draw[ultra thick] (1,0)to(1,0.25); \draw[ultra thick, <-] (1,0.25)to(1,1);
\draw[ultra thick, ->] (2,0)to(2,0.75); \draw[ultra thick] (2,0.75)to(2,1);
\draw[ultra thick] (3,0)to(3,0.25); \draw[ultra thick, <-] (3,0.25)to(3,1);
\draw[ultra thick, ->] (4,0)to(4,0.75); \draw[ultra thick] (4,0.75)to(4,1);
\draw[ultra thick] (5,0)to(5,0.25); \draw[ultra thick, <-] (5,0.25)to(5,1);
\draw[ultra thick] (6,0)to(6,0.25); \draw[ultra thick, <-] (6,0.25)to(6,1);
\draw[ultra thick, ->] (7,0)to(7,0.75); \draw[ultra thick] (7,0.75)to(7,1);
\draw[ultra thick] (8,0)to(8,0.25); \draw[ultra thick, <-] (8,0.25)to(8,1);
\draw[ultra thick, ->] (9,0)to(9,0.75); \draw[ultra thick] (9,0.75)to(9,1);
\draw[ultra thick,out= 90,in=180] (5,-4) to (1,0);                   
\draw[ultra thick,out= 270,in=180] (5,-4) to (9,0);                  
\draw[ultra thick,out= 90,in=180] (5,-3) to (2,0);                  
\draw[ultra thick,out= 270,in=180] (5,-3) to (8,0);                 
\draw[ultra thick,out= 90,in=180] (5,-2) to (3,0);                  
\draw[ultra thick,out= 270,in=180] (5,-2) to (7,0);                
\draw[ultra thick,out= 90,in=180] (5,-1) to (4,0);
\draw[ultra thick,out= 270,in=180] (5,-1) to (6,0);
\end{tikzpicture}
}}
\end{array}
\end{array}
\end{equation*}
\end{center}
\caption{Example of a half HTSASM, its associated CPM, and the corresponding SIC
in the case $N=2n+1$ with $n=4$.}
\label{fig-ACS-delta}
\end{figure*}

Before assigning weights to these combinatorial objects it is useful to extend their definition.
Let ${\cal P}$ denote the set of all partitions $\lambda=(\lambda_1,\lambda_2,\ldots)$ with
weakly decreasing non-negative integer parts, and let ${\cal D}$ be the subset consisting of strict partitions 
whose parts are strictly decreasing positive integers. In both cases the partition $\lambda$ is said
to have weight $|\lambda|$ equal to the sum of its parts and length $\ell(\lambda)$ equal to the number
of its non-zero parts.


For our purposes in which we deal with the right hand portions 
of HTSASMs we require the following, which is the half-turn version of a definition due to 
Okada~\cite{Okada}:
\begin{Definition} For fixed $N,m\in\N$, with $N=2n$ or $2n+1$ and $m \geq n$, and strict partition $\lambda\in{\cal D}$ of 
length $\ell(\lambda)=n$ and largest part $\lambda_1\leq m$, an $N\times m$ matrix $A$ is said to be a $\lambda$-HTSASM if the entries $a_{ij}$ for $1\leq i\leq N$ and $1\leq j\leq m$ satisfy the conditions:
\begin{enumerate}
\item $a_{ij}\in\{1,-1,0\}$;
\item $\sum_{k=j}^m a_{ik}\in\{0,1\}$, $\sum_{k=1}^m a_{ik}+\sum_{k=1}^j a_{N+1-i,k}\in\{0,1\}$,   and $\sum_{k=1}^i a_{kj}\in\{0,1\}$;
\item $\sum_{j=1}^m (a_{ij}+a_{N+1-i,j})=1$ and $\sum_{i=1}^{N} a_{ij}=1$ if $j=\lambda_k$ for some $k$ and is $0$ otherwise.
\end{enumerate}
\end{Definition}

A typical $\lambda$-HTSASM is illustrated below in Figure~\ref{fig-ACS-lambda} 
in the case $N=2n+1$ with $n=3$ and $\lambda=(8,6,3)$, along with its associated CPM and U-turn 
square ice configuration. 

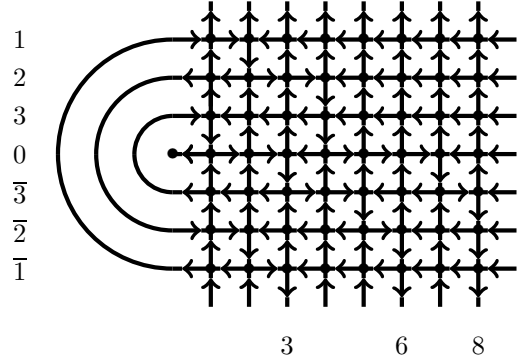
\begin{figure*}[htbp]
\begin{center}
\begin{equation*}\hskip-1ex
\begin{array}{c}
  \begin{array}{c}
   \begin{array}{cccccccc}
    1&2&3&4&5&6&7&8\\
   \end{array}\cr
   \left[
     \begin{array}{cccccccc}
     0 & 1 & 0 & 0 & 0 & 0 & 0 & 0\\
     0 &\ov1& 0 & 1 & 0 & 0 & 0 & 0\\
     1 & 0 & 0 & 0 & 0 & 0 & 0 & 0\\
   \ov1& 0 & 1 & \ov1 & 0 & 0 & 1 & 0\\
     0 & 0 & \ov1 & 0 & 1 & 0 & \ov1 & 1\\
     0 & 1 & 0 & 0 & \ov1 & 1 & 0 & 0\\
     0 & \ov1 & 1 & 0 & 0 & 0 & 0 & 0\\
     \end{array}\right]\cr
   \begin{array}{cccccccc}
    0&0&1&0&0&1&0&1\\
   \end{array}
	\end{array}
\!\!%
\left[\begin{array}{cccccccc}
\sc{\SW} &\sc{\WE} &\sc{\SE} &\sc{\SE} &\sc{\SE} &\sc{\SE} &\sc{\SE} &\sc{\SE}\\
\sc{\SE} &\sc{\NS} &\sc{\SW} &\sc{\WE} &\sc{\SE} &\sc{\SE} &\sc{\SE} &\sc{\SE}\\
\sc{\WE} &\sc{\SE} &\sc{\SE} &\sc{\NE} &\sc{\SE} &\sc{\SE} &\sc{\SE} &\sc{\SE}\\
\sc{\NS} &\sc{\SW} &\sc{\WE} &\sc{\NS} &\sc{\SW} &\sc{\SW} &\sc{\WE} &\sc{\SE}\\
\sc{\SE} &\sc{\SE} &\sc{\NS} &\sc{\SW} &\sc{\WE} &\sc{\SE} &\sc{\NS} &\sc{\WE}\\
\sc{\SW} &\sc{\WE} &\sc{\SE} &\sc{\SE} &\sc{\NS} &\sc{\WE} &\sc{\SE} &\sc{\NE}\\
\sc{\SE} &\sc{\NS} &\sc{\WE} &\sc{\SE} &\sc{\SE} &\sc{\NE} &\sc{\SE} &\sc{\NE}\\
\end{array}\right]
\cr\cr 
\vcenter{\hbox{
\begin{tikzpicture}[x={(0in,-0.2in)},y={(0.2in,0in)}] 
\foreach \i in {1,...,7}
\foreach \j in {1,...,8}
\draw(\i,\j)node{$\bullet$};
\draw(4,0)node{$\bullet$};
\draw(1,-4)node{$1$};
\draw(2,-4)node{$2$};
\draw(3,-4)node{$3$};
\draw(4,-4)node{$0$}; 
\draw(5,-4)node{$\ov3$};
\draw(6,-4)node{$\ov2$};
\draw(7,-4)node{$\ov1$};
\draw(9,3)node{$3$};
\draw(9,6)node{$6$};
\draw(9,8)node{$8$};
%
\draw[ultra thick, ->] (1,1)to(1,1.75); \draw[ultra thick] (1,1.75)to(1,2);
\draw[ultra thick] (1,2)to(1,2.25); \draw[ultra thick,<-] (1,2.25)to(1,3);
\draw[ultra thick] (1,3)to(1,3.25); \draw[ultra thick,<-] (1,3.25)to(1,4);
\draw[ultra thick] (1,4)to(1,4.25); \draw[ultra thick,<-] (1,4.25)to(1,5);
\draw[ultra thick] (1,5)to(1,5.25); \draw[ultra thick,<-] (1,5.25)to(1,6);
\draw[ultra thick] (1,6)to(1,6.25); \draw[ultra thick,<-] (1,6.25)to(1,7);
\draw[ultra thick] (1,7)to(1,7.25); \draw[ultra thick,<-] (1,7.25)to(1,8);
\draw[ultra thick] (1,8)to(1,8.25); \draw[ultra thick,<-] (1,8.25)to(1,9);
\draw[ultra thick] (2,1)to(2,1.25);  \draw[ ultra thick,<-] (2,1.25)to(2,2);
\draw[ultra thick,->] (2,2)to(2,2.75);  \draw[ultra thick] (2,2.75)to(2,3);
\draw[ultra thick,->] (2,3)to(2,3.75);  \draw[ ultra thick] (2,3.75)to(2,4);
\draw[ultra thick] (2,4)to(2,4.25); \draw[ultra thick,<-] (2,4.25)to(2,5);
\draw[ultra thick] (2,5)to(2,5.25);  \draw[ ultra thick,<-] (2,5.25)to(2,6);
\draw[ultra thick] (2,6)to(2,6.25);  \draw[ultra thick,<-] (2,6.25)to(2,7);
\draw[ultra thick] (2,7)to(2,7.25);  \draw[ ultra thick,<-] (2,7.25)to(2,8);
\draw[ultra thick] (2,8)to(2,8.25); \draw[ultra thick,<-] (2,8.25)to(2,9);
\draw[ultra thick] (3,1)to(3,1.25);  \draw[ultra thick,<-] (3,1.25)to(3,2);
\draw[ultra thick] (3,2)to(3,2.25);  \draw[ultra thick,<-] (3,2.25)to(3,3);
\draw[ultra thick] (3,3)to(3,3.25);  \draw[ultra thick,<-] (3,3.25)to(3,4);
\draw[ultra thick] (3,4)to(3,4.25);  \draw[ultra thick,<-] (3,4.25)to(3,5);
\draw[ultra thick] (3,5)to(3,5.25);  \draw[ultra thick,<-] (3,5.25)to(3,6);
\draw[ultra thick] (3,6)to(3,6.25);  \draw[ ultra thick,<-] (3,6.25)to(3,7);
\draw[ultra thick] (3,7)to(3,7.25);  \draw[ultra thick,<-] (3,7.25)to(3,8);
\draw[ultra thick] (3,8)to(3,8.25); \draw[ultra thick,<-] (3,8.25)to(3,9);
\draw[ultra thick,<-] (4,0.25)to(4,1);
\draw[ultra thick,->] (4,1)to(4,1.75);  \draw[ ultra thick] (4,1.75)to(4,2);
\draw[ultra thick,->] (4,2)to(4,2.75);  \draw[ ultra thick] (4,2.75)to(4,3);
\draw[ultra thick] (4,3)to(4,3.25);  \draw[ ultra thick,<-] (4,3.25)to(4,4);
\draw[ultra thick,->] (4,4)to(4,4.75); \draw[ultra thick] (4,4.75)to(4,5);
\draw[ultra thick,->] (4,5)to(4,5.75);  \draw[ ultra thick] (4,5.75)to(4,6);
\draw[ultra thick,->] (4,6)to(4,6.75);  \draw[ ultra thick] (4,6.75)to(4,7);
\draw[ultra thick] (4,7)to(4,7.25);  \draw[ ultra thick,<-] (4,7.25)to(4,8);
\draw[ultra thick] (4,8)to(4,8.25); \draw[ultra thick,<-] (4,8.25)to(4,9);
\draw[ultra thick] (5,1)to(5,1.25);  \draw[ultra thick,<-] (5,1.25)to(5,2);
\draw[ultra thick] (5,2)to(5,2.25);  \draw[ ultra thick,<-] (5,2.25)to(5,3);
\draw[ultra thick,->] (5,3)to(5,3.75);  \draw[ ultra thick] (5,3.75)to(5,4);
\draw[ultra thick,->] (5,4)to(5,4.75); \draw[ultra thick] (5,4.75)to(5,5);
\draw[ultra thick] (5,5)to(5,5.25);  \draw[ultra thick,<-] (5,5.25)to(5,6);
\draw[ultra thick] (5,6)to(5,6.25);  \draw[ ultra thick,<-] (5,6.25)to(5,7);
\draw[ultra thick,->] (5,7)to(5,7.75);  \draw[ ultra thick] (5,7.75)to(5,8);
\draw[ultra thick] (5,8)to(5,8.25); \draw[ultra thick,<-] (5,8.25)to(5,9);
\draw[ultra thick, ->] (6,1)to(6,1.75);  \draw[ultra thick] (6,1.75)to(6,2);
\draw[ultra thick] (6,2)to(6,2.25);  \draw[ ultra thick,<-] (6,2.25)to(6,3);
\draw[ultra thick] (6,3)to(6,3.25);  \draw[ ultra thick,<-] (6,3.25)to(6,4);
\draw[ultra thick] (6,4)to(6,4.25); \draw[ultra thick,<-] (6,4.25)to(6,5);
\draw[ultra thick, ->] (6,5)to(6,5.75);  \draw[ultra thick] (6,5.75)to(6,6);
\draw[ultra thick] (6,6)to(6,6.25);  \draw[ ultra thick,<-] (6,6.25)to(6,7);
\draw[ultra thick] (6,7)to(6,7.25);  \draw[ ultra thick,<-] (6,7.25)to(6,8);
\draw[ultra thick] (6,8)to(6,8.25); \draw[ultra thick,<-] (6,8.25)to(6,9);
\draw[ultra thick] (7,1)to(7,1.25);  \draw[ ultra thick,<-] (7,1.25)to(7,2);
\draw[ultra thick,->] (7,2)to(7,2.75);  \draw[ ultra thick] (7,2.75)to(7,3);
\draw[ultra thick] (7,3)to(7,3.25);  \draw[ ultra thick,<-] (7,3.25)to(7,4);
\draw[ultra thick] (7,4)to(7,4.25); \draw[ultra thick,<-] (7,4.25)to(7,5);
\draw[ultra thick] (7,5)to(7,5.25);  \draw[ ultra thick,<-] (7,5.25)to(7,6);
\draw[ultra thick] (7,6)to(7,6.25);  \draw[ ultra thick,<-] (7,6.25)to(7,7);
\draw[ultra thick] (7,7)to(7,7.25);  \draw[ ultra thick,<-] (7,7.25)to(7,8);
\draw[ultra thick] (7,8)to(7,8.25); \draw[ultra thick,<-] (7,8.25)to(7,9);
%
%
\draw[ultra thick](0,1)to(0.25,1); \draw[ultra thick,<-] (0.25,1)to(1,1);
\draw[ultra thick](0,2)to(0.25,2); \draw[ultra thick,<-] (0.25,2)to(1,2);
\draw[ultra thick](0,3)to(0.25,3); \draw[ultra thick,<-] (0.25,3)to(1,3);
\draw[ultra thick](0,4)to(0.25,4); \draw[ultra thick,<-] (0.25,4)to(1,4);
\draw[ultra thick](0,5)to(0.25,5); \draw[ultra thick,<-] (0.25,5)to(1,5);
\draw[ultra thick](0,6)to(0.25,6); \draw[ultra thick,<-] (0.25,6)to(1,6);
\draw[ultra thick](0,7)to(0.25,7); \draw[ultra thick,<-] (0.25,7)to(1,7);
\draw[ultra thick](0,8)to(0.25,8); \draw[ultra thick,<-] (0.25,8)to(1,8);
\draw[ultra thick](1,1)to(1.25,1); \draw[ultra thick,<-] (1.25,1)to(2,1);
\draw[ultra thick,->](1,2)to(1.75,2); \draw[ultra thick] (1.75,2)to(2,2);
\draw[ultra thick](1,3)to(1.25,3); \draw[ultra thick,<-] (1.25,3)to(2,3);
\draw[ultra thick](1,4)to(1.25,4); \draw[ultra thick,<-] (1.25,4)to(2,4);
\draw[ultra thick](1,5)to(1.25,5); \draw[ultra thick,<-] (1.25,5)to(2,5);
\draw[ultra thick](1,6)to(1.25,6); \draw[ultra thick,<-] (1.25,6)to(2,6);
\draw[ultra thick](1,7)to(1.25,7); \draw[ultra thick,<-] (1.25,7)to(2,7);
\draw[ultra thick](1,8)to(1.25,8); \draw[ultra thick,<-] (1.25,8)to(2,8);
\draw[ ultra thick](2,1)to(2.75,1); \draw[ultra thick,<-] (2.25,1)to(3,1);
\draw[ ultra thick](2,2)to(2.25,2); \draw[ultra thick,<-] (2.25,2)to(3,2);
\draw[ ultra thick](2,3)to(2.25,3); \draw[ ultra thick,<-] (2.25,3)to(3,3);
\draw[ ultra thick,->](2,4)to(2.75,4); \draw[ ultra thick] (2.75,4)to(3,4);
\draw[ ultra thick](2,5)to(2.25,5); \draw[ultra thick,<-] (2.25,5)to(3,5);
\draw[ ultra thick](2,6)to(2.25,6); \draw[ultra thick,<-] (2.25,6)to(3,6);
\draw[ ultra thick](2,7)to(2.25,7); \draw[ ultra thick,<-] (2.25,7)to(3,7);
\draw[ ultra thick](2,8)to(2.25,8); \draw[ ultra thick,<-] (2.25,8)to(3,8);
\draw[ ultra thick,->](3,1)to(3.75,1); \draw[ultra thick] (3.75,1)to(4,1);
\draw[ultra thick](3,2)to(3.25,2); \draw[ultra thick,<-] (3.25,2)to(4,2);
\draw[ ultra thick](3,3)to(3.25,3); \draw[ ultra thick,<-] (3.25,3)to(4,3);
\draw[ ultra thick,->](3,4)to(3.75,4); \draw[ ultra thick] (3.75,4)to(4,4);
\draw[ ultra thick](3,5)to(3.25,5); \draw[ultra thick,<-] (3.25,5)to(4,5);
\draw[ultra thick](3,6)to(3.25,6); \draw[ultra thick,<-] (3.25,6)to(4,6);
\draw[ ultra thick](3,7)to(3.25,7); \draw[ ultra thick,<-] (3.25,7)to(4,7);
\draw[ ultra thick](3,8)to(3.25,8); \draw[ ultra thick,<-] (3.25,8)to(4,8);
\draw[ultra thick](4,1)to(4.25,1); \draw[ ultra thick,<-] (4.25,1)to(5,1);
\draw[ ultra thick](4,2)to(4.25,2); \draw[ultra thick,<-] (4.25,2)to(5,2);
\draw[ ultra thick,->](4,3)to(4.75,3); \draw[ ultra thick] (4.75,3)to(5,3);
\draw[ ultra thick](4,4)to(4.25,4); \draw[ ultra thick,<-] (4.25,4)to(5,4);
\draw[ultra thick](4,5)to(4.25,5); \draw[ ultra thick,<-] (4.25,5)to(5,5);
\draw[ ultra thick](4,6)to(4.25,6); \draw[ultra thick,<-] (4.25,6)to(5,6);
\draw[ ultra thick,->](4,7)to(4.75,7); \draw[ ultra thick] (4.75,7)to(5,7);
\draw[ ultra thick](4,8)to(4.25,8); \draw[ ultra thick,<-] (4.25,8)to(5,8);
\draw[ ultra thick](5,1)to(5.25,1); \draw[ ultra thick,<-] (5.25,1)to(6,1);
\draw[ultra thick](5,2)to(5.25,2); \draw[ ultra thick,<-] (5.25,2)to(6,2);
\draw[ ultra thick](5,3)to(5.25,3); \draw[ ultra thick,<-] (5.25,3)to(6,3);
\draw[ ultra thick](5,4)to(5.25,4); \draw[ ultra thick,<-] (5.25,4)to(6,4);
\draw[ ultra thick,->](5,5)to(5.75,5); \draw[ ultra thick] (5.75,5)to(6,5);
\draw[ultra thick](5,6)to(5.25,6); \draw[ ultra thick,<-] (5.25,6)to(6,6);
\draw[ ultra thick](5,7)to(5.25,7); \draw[ ultra thick,<-] (5.25,7)to(6,7);
\draw[ ultra thick,->](5,8)to(5.75,8); \draw[ ultra thick] (5.75,8)to(6,8);
\draw[ ultra thick](6,1)to(6.25,1); \draw[ultra thick,<-] (6.25,1)to(7,1);
\draw[ ultra thick,->](6,2)to(6.75,2); \draw[ ultra thick] (6.75,2)to(7,2);
\draw[ ultra thick](6,3)to(6.25,3); \draw[ultra thick,<-] (6.25,3)to(7,3);
\draw[ ultra thick](6,4)to(6.25,4); \draw[ ultra thick,<-] (6.25,4)to(7,4);
\draw[ ultra thick](6,5)to(6.25,5); \draw[ultra thick,<-] (6.25,5)to(7,5);
\draw[ ultra thick,->](6,6)to(6.75,6); \draw[ ultra thick] (6.75,6)to(7,6);
\draw[ ultra thick](6,7)to(6.25,7); \draw[ultra thick,<-] (6.25,7)to(7,7);
\draw[ ultra thick,->](6,8)to(6.75,8); \draw[ ultra thick] (6.75,8)to(7,8);
\draw[ultra thick](7,1)to(7.25,1); \draw[ultra thick,<-] (7.25,1)to(8,1);
\draw[ ultra thick](7,2)to(7.25,2); \draw[ ultra thick,<-] (7.25,2)to(8,2);
\draw[ultra thick,->](7,3)to(7.75,3); \draw[ ultra thick] (7.75,3)to(8,3);
\draw[ ultra thick](7,4)to(7.25,4); \draw[ ultra thick,<-] (7.25,4)to(8,4);
\draw[ultra thick](7,5)to(7.25,5); \draw[ultra thick,<-] (7.25,5)to(8,5);
\draw[ ultra thick,->](7,6)to(7.75,6); \draw[ ultra thick] (7.75,6)to(8,6);
\draw[ultra thick](7,7)to(7.25,7); \draw[ ultra thick,<-]  (7.25,7)to(8,7);
\draw[ ultra thick,->](7,8)to(7.75,8); \draw[ ultra thick] (7.75,8)to(8,8);
%
\draw[ultra thick, ->] (1,0)to(1,0.75); \draw[ultra thick] (1,0.75)to(1,1);
\draw[ultra thick] (2,0)to(2,0.25); \draw[ultra thick, <-] (2,0.25)to(2,1);
\draw[ultra thick, ->] (3,0)to(3,0.75); \draw[ultra thick] (3,0.75)to(3,1);
\draw[ultra thick] (4,0)to(4,0.25); \draw[ultra thick, <-] (4,0.25)to(4,1);
\draw[ultra thick] (5,0)to(5,0.25); \draw[ultra thick, <-] (5,0.25)to(5,1);
\draw[ultra thick, ->] (6,0)to(6,0.75); \draw[ultra thick] (6,0.75)to(6,1);
\draw[ultra thick] (7,0)to(7,0.25); \draw[ultra thick, <-] (7,0.25)to(7,1);
\draw[ultra thick,out= 90,in=180] (4,-3) to (1,0);                   
\draw[ultra thick,out= 270,in=180] (4,-3) to (7,0);                  
\draw[ultra thick,out= 90,in=180] (4,-2) to (2,0);                  
\draw[ultra thick,out= 270,in=180] (4,-2) to (6,0);                 
\draw[ultra thick,out= 90,in=180] (4,-1) to (3,0);                  
\draw[,ultra thick,out= 270,in=180] (4,-1) to (5,0);                
\end{tikzpicture}
}}
%
\end{array}
\end{equation*}
\end{center}
\caption{Example of a $\lambda$-HTSASM, its associated CPM, and the corresponding U-turn SIC 
in the case $N=2n+1$ with $n=3$ and $\lambda=(8,6,3)$.}
\label{fig-ACS-lambda}
\end{figure*}

The column sums of the $\lambda$-HTSASM are $1$ for those columns labelled by the
parts $8$, $6$ and $3$ of $\lambda$, and $0$ otherwise. By the same token, while the outer edge
directions of the U-turn SIC are all upwards on the upper boundary and leftwards on the right hand boundary
and on the left hand end of the central row, those on the lower boundary are downwards only 
in the columns labelled by the parts $8$, $6$ and $3$ of $\lambda$, and upwards otherwise.   

\section{Weighted \texorpdfstring{$\lambda$}{lambda}-HTSASMs}
\label{sec:ASMweights}

The connection between alternating sign matrices and compass point matrices is such that
a number of remarkable identities involving various statistics on ASMs can be recast
in terms of statistics on the corresponding CPMs.
First we have the following two theorems in the special case of $\delta$-HTSASMs:

\begin{Theorem}[Okada~\cite{Okada}]\label{the-Okada}
For $n\in\N$ let $\delta=(n,n-1,\ldots,1)$ and let ${\cal B}^\delta_n$ be the set of all $2n\times n$ 
$\delta$-HTSASMs. For each $A\in{\cal B}^\delta_n$ let $C$ be the corresponding $2n\times n$
CPM with matrix elements $c_{ij}$. Let $\x=(x_1,x_2,\ldots,x_{n})$ be a sequence of non-zero indeterminates. 
Then for any $t$ we have
\begin{equation}
      \sum_{A\in{\cal B}^\delta_n} \ \wgt(A) = 
			\prod_{i=1}^n (1-tx_i)\ \prod_{1\leq i<j\leq n} (1-t^2x_ix_j)(1-t^2x_ix_j^{-1})\,,
\end{equation} 
where 
\begin{equation}
   \wgt(A) = \prod_{i=1}^n  x_i^{-i}\    \prod_{i=1}^{2n}\prod_{j=1}^n  \wgt(c_{ij})\,,
\end{equation}
with $\wgt(c_{ij})$ as tabulated below: 
\begin{equation}\label{tab-Okada}
\begin{array}{|l|l|l|}
\hline
\hbox{Entry}&i\leq n&i>n\cr
\hbox{at $(i,j)$}&j\leq n&j\leq n\cr
\hline
\WE&\sqrt{-1}\,x_i&x_{2n+1-i}\cr
\NS&-\sqrt{-1}\,(1-t^2)&(1-t^2)\cr
\NE&\sqrt{-1}\,t&\sqrt{-1}\,t\,x_{2n+1-i}\cr
\SE&1&x_{2n+1-i}\cr
\NW&x_i&1\cr
\SW&\sqrt{-1}\,t\,x_i&\sqrt{-1}\,t\cr
\hline
\end{array}
\end{equation}
\end{Theorem}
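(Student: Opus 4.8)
\medskip
\noindent The plan is to read the left-hand side as the partition function of a six-vertex model with a U-turn boundary and to pin it down by the Yang--Baxter and reflection equations together with an induction on $n$. To the $k$-th pair of rows of the CPM --- rows $k$ and $2n+1-k$, joined by the U-turn on the left --- attach a spectral variable $x_k$, and read the two data columns of the weight table~(\ref{tab-Okada}), the block $i\le n$ and the block $i>n$, as the incoming and outgoing legs of a path that doubles back at the U-turn, the constants $\sqrt{-1}$ and the powers of $t$ being a fixed gauge. With all top edges directed up, all right edges directed left, and --- since $\delta=(n,n-1,\ldots,1)$ has every part in $\{1,\ldots,n\}$ --- all bottom edges directed down, the admissible ice configurations are exactly the CPMs of ${\cal B}^\delta_n$; hence $Z_n(\x;t):=\sum_A\prod_{i,j}\wgt(c_{ij})$ is the U-turn domain-wall partition function, and, since the $\prod_i x_i^{-i}$ in $\wgt(A)$ absorbs a monomial, the claim is equivalent to $Z_n(\x;t)=\prod_{i=1}^n x_i^{\,i}\ \prod_{i=1}^n(1-tx_i)\prod_{1\le i<j\le n}(1-t^2x_ix_j)(1-t^2x_ix_j^{-1})$.

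First I would establish the structural properties of $Z_n$. From the Yang--Baxter equation for the $R$-matrix read off from~(\ref{tab-Okada}), together with the reflection (boundary Yang--Baxter) equation at the U-turn, adjacent row-pairs may be interchanged with the intertwiner absorbed at the left boundary, so $Z_n$ is symmetric in $x_1,\ldots,x_n$. A direct check of the table exhibits a free-fermion-type relation among the six weights in each block --- it is this relation that will ultimately force a product, rather than merely a determinant, for $Z_n$ --- and counting rows bounds the degree of $Z_n$, as a Laurent polynomial in each $x_i$, by the degree of the claimed right-hand side.

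Next I would run an induction on $n$. Following the Izergin--Korepin and Tsuchiya recursion, specialise $x_n$ to a value at which a line of $R$-matrix weights vanishes; this freezes the outermost row-pair and its U-turn into a single configuration, and $Z_n$ then collapses to $x_n^{\,n}(1-tx_n)\prod_{i=1}^{n-1}(1-t^2x_ix_n)(1-t^2x_ix_n^{-1})$ times $Z_{n-1}(x_1,\ldots,x_{n-1};t)$ once the frozen vertex weights and the U-turn contribution are gathered. A short computation shows the claimed right-hand side obeys exactly the same one-step recursion, and the base case $n=1$ is the single $2\times 1$ configuration of weight $1-tx_1$ (after the monomial factor). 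Since a symmetric Laurent polynomial of the established degree is determined by its values at enough such specialisations together with $Z_1$ and a comparison of leading terms in one variable, equality follows.

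The step I expect to be the main obstacle is the bookkeeping that links the asymmetric table~(\ref{tab-Okada}) to a bona fide solution of the Yang--Baxter and reflection equations with one consistent spectral parameter per row-pair, and then the explicit evaluation of the frozen contribution at the recursion point --- in particular tracking how the U-turn and the half-turn identification of the suppressed left half of the matrix account for the powers of $x_n$, $t$ and $\sqrt{-1}$ needed to produce $\prod_i x_i^{\,i}$ and the factors $(1-tx_n)$, $(1-t^2x_ix_n)$, $(1-t^2x_ix_n^{-1})$. A route that bypasses the explicit recursion is to quote Tsuchiya's determinant formula for the U-turn domain-wall partition function, substitute the spectral parameters reproducing the weights of~(\ref{tab-Okada}), and evaluate the resulting $n\times n$ determinant, which by the free-fermion relation is of Cauchy type; this reduces the theorem to a determinant identity, at the cost of first reconciling conventions with Tsuchiya's.
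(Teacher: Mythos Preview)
Your approach is genuinely different from the paper's. You propose the integrable-systems route: read the weighted sum as a U-turn six-vertex partition function, invoke the Yang--Baxter and reflection equations to obtain symmetry in the $x_i$, then run an Izergin--Korepin/Tsuchiya style recursion by specialising $x_n$ to freeze the outer row-pair, with the free-fermion condition (which does hold here: $c_1c_2=a_1a_2+b_1b_2$ in the Brubaker--Schultz labelling) explaining why the answer is a product rather than a determinant. This is a legitimate strategy, close in spirit to Kuperberg's treatment of symmetry classes of ASMs, and your honest flagging of the bookkeeping at the U-turn and the explicit frozen-weight computation as the hard part is accurate.

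The paper, by contrast, does not prove Okada's theorem directly at all. It builds a more general identity for odd-sided $\lambda$-HTSASMs (Corollary~\ref{cor-Tok-bnprime}) by passing from ASMs to primed shifted tableaux (Lemma~\ref{lem-AP}), then to non-intersecting lattice paths (Theorem~\ref{the-Pdet}), then evaluating the resulting $n\times n$ determinant via a Rosengren--Schlosser identity (Lemmas~\ref{lem-deth} and~\ref{lem-detm}). Okada's theorem drops out only at the very end, in Section~\ref{sec:corollaries}, by the parameter specialisation $x_i\mapsto Itx_i$, $z_0\mapsto It$, $\ov{y}_i\mapsto It\ov{x}_i$ recorded in (\ref{tab-theorems}); the even-rowed $B_n$ case is obtained from the odd-rowed $B'_n$ case by setting $z_0=\sqrt{-1}$ so that the central row's $\NS$ weight $z_0+\ov{z}_0$ vanishes.

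What each buys: your route is self-contained for this single identity and explains structurally why the factorisation occurs (free-fermion point), but requires verifying the YBE and reflection equation for these specific gauged weights and carrying out the recursion carefully. The paper's route is longer and more combinatorial, but yields the full Tokuyama-type factorisation (\ref{eqn-Tok-bnprime-Phi}) with the character factor identified, of which Okada's formula is only the $\mu=0$ specialisation; it also handles the $B'_n$ and $B_n$ cases uniformly and extends to the multi-parameter Brubaker--Schultz weighting without further work.
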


\begin{Theorem}[Simpson~\cite{Simpson}]\label{the-Simpson}
For $n\in\N$ let $\delta=(n,n-1,\ldots,1)$ and let ${\cal B'}^\delta_n$ be the set of all $(2n+1)\times n$ 
$\delta$-HTSASMs. For each $A\in{\cal B'}^\delta_n$ let $C$ be the corresponding $(2n+1)\times n$
CPM with matrix elements $c_{ij}$. Let $\x=(x_1,x_2,\ldots,x_{n})$ be a sequence of non-zero indeterminates. 
Then for any $t>0$ we have
\begin{equation}
      \sum_{A\in{\cal B'}^\delta_n} \ \wgt(A) = 
			\prod_{i=1}^n (1+t^2x_i)\ \prod_{1\leq i<j\leq n} (1+t^2x_ix_j)(1+t^2x_ix_j^{-1})\,,
\end{equation} 
where 
\begin{equation}
   \wgt(A) = \prod_{i=1}^n  x_i^{-i}\    \prod_{i=1}^{2n+1}\prod_{j=1}^n  \wgt(c_{ij})\,,
\end{equation}
with $\wgt(c_{ij})$ as tabulated below: 
\begin{equation}\label{tab-Simpson}
\begin{array}{|l|l|l|l|l|}
\hline
\hbox{Entry}&i<n+1&i<n+1&i=n+1&i>n+1\cr
\hbox{at $(i,j)$}&j=1&j>1&j\geq1&j\geq1\cr
\hline
\WE&x_i&t^{-1}\,x_i&1&x_{2n+2-i}\cr
\NS&t(1+t^2)&t(1+t^2)& (1+t^2)&\ds (1+t^2)\cr
\NE&t&t&t&t\,x_{2n+2-i}\cr
\SE&1&1&1&x_{2n+2-i}\cr
\NW&t\,x_i&x_i&1&1\cr
\SW&t^2x_i&t\,x_i&t&t\cr
\hline
\end{array}
\end{equation}
\end{Theorem}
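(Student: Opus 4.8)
The plan is to interpret the left-hand side as the partition function $Z_n(\x;t)$ of the U-turn square ice model built from the right-hand portions of odd-sided HTSASMs, in the style of Figure~\ref{fig-ACS-delta}: each vertex of a configuration carries the Boltzmann weight listed in (\ref{tab-Simpson}), and there is an overall prefactor $\prod_{i=1}^n x_i^{-i}$. Because the top row of the corresponding $\delta$-HTSASM ranges over all of the parts $n,n-1,\ldots,1$, every column sum is forced to be $1$; combined with the prescribed outer edge orientations this places the model in a rigid, domain-wall-type regime, with a U-turn closing the left boundary and a distinguished central row $i=n+1$ forced by the special central column.

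The cleanest route is then a recursion on $n$. Treating the innermost pair of rows $i=n$ and $i=n+2$, together with the central row $i=n+1$, as one block carrying the variable $x_n$, one shows by repeated use of the Yang--Baxter (star--triangle) relation for the six-vertex $R$-matrix compatible with (\ref{tab-Simpson}) that this block can be commuted through the remaining lattice and that its left-boundary U-turn vertex can be resolved; what is extracted is exactly
\begin{equation}
 Z_n(\x;t) = (1+t^2 x_n)\,\prod_{i=1}^{n-1}(1+t^2 x_ix_n)(1+t^2 x_ix_n^{-1})\ Z_{n-1}(x_1,\ldots,x_{n-1};t),
\end{equation}
once the powers $x_i^{-i}$ are matched. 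The factor $(1+t^2x_n)$ is the contribution of the central-row vertex in column $n$ together with the boundary U-turn, while each factor $(1+t^2x_ix_n)(1+t^2x_ix_n^{-1})$ arises from swapping the $x_n$-block past the $x_i$-rows above it and their mirror images below. The base case $Z_1 = 1+t^2x_1$ is checked by listing the $3\times1$ configurations directly, and iterating the recursion yields the stated product.

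An alternative, more in keeping with the combinatorial apparatus developed later in this paper, is to encode the $\delta$-HTSASM six-vertex configurations as families of non-intersecting lattice paths with U-turns on the left, invoke the Lindstr\"om--Gessel--Viennot lemma (which in the U-turn setting produces a Pfaffian that, for the staircase shape $\delta$, collapses to a determinant of the kind handled in Theorem~\ref{the-Pdet}), and then factor that determinant by elementary row and column operations. Since in the end the identity is an equality of Laurent polynomials in $\x$ and $t$, the hypothesis $t>0$ in the statement is only needed to fix signs along Simpson's original route and can be dropped once the polynomial identity is in hand.

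The main difficulty is bookkeeping rather than conceptual. One must propagate the asymmetric weights of (\ref{tab-Simpson}) --- which differ between column $j=1$ and columns $j>1$, and again across the central row $i=n+1$ --- coherently through the U-turn and through every star--triangle move, and confirm that no spurious monomial factors survive. Aligning the normalisation $\prod_i x_i^{-i}$ with the recursion, and verifying that it is exactly $(1+t^2x_n)$ (and not, say, the factor $(1-tx_n)$ that appears in the even-sided Okada case of Theorem~\ref{the-Okada}) that drops out of the combined boundary-and-central-row contribution, is where the calculation needs to be done with care.
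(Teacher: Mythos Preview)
Your proposal is not a proof but a proof sketch, and moreover it takes a route quite different from the paper's. In the paper, Simpson's identity is not proved directly at all: it is recovered in Section~\ref{sec:corollaries} as the specialisation $x_i\mapsto tx_i$, $z_0\mapsto t$, $\ov{y}_i\mapsto t\ov{x}_i$ of Corollary~\ref{cor-Tok-bnprime}, which in turn rests on Theorem~\ref{the-result2}. That theorem is established by passing from $\lambda$-HTSASMs to primed shifted tableaux (Lemma~\ref{lem-AP}), expressing the weighted sum over the latter as a determinant via a lattice-path argument (Theorem~\ref{the-Pdet}), and evaluating that determinant explicitly using a Rosengren--Schlosser type identity (Lemma~\ref{lem-deth}). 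Setting $\lambda=\delta$ in the resulting product formula (\ref{eqn-bnprime-delta}) and then specialising the parameters gives Simpson's product. No Yang--Baxter relation, no recursion in $n$, and no Pfaffian appear anywhere in the paper's argument.

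Your Yang--Baxter recursion strategy is plausible in outline---the weights in (\ref{tab-Simpson}) do satisfy the free-fermion condition $c_1c_2=a_1a_2+b_1b_2$, so an $R$-matrix exists---but the proposal does not carry it out. You assert the recursion and its factorised form without exhibiting the $R$-matrix, without checking the boundary (U-turn plus central-row) reflection compatibility, and without tracking how the asymmetric $j=1$ versus $j>1$ weights survive the commutation. Your own final paragraph concedes that this is exactly where the work lies. As written, then, this is a programme rather than a proof, and it differs from the paper's method both in technique (integrable-systems recursion versus tableau/lattice-path determinant) and in scope (direct attack on the $\delta$ case versus specialisation from a general $\lambda$).
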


The identities appearing in these two theorems can be generalised in two different ways: firstly 
by replacing the single parameter $t$ by a sequence of indeterminates, and then by 
passing from $\delta$ to an arbitrary strict partition $\lambda\in{\cal D}$ of length $n$
to obtain a factorisation formula of the Tokuyama type~\cite{Tokuyama} in the $\lambda$-HTSASM case. 
For example, it was found by Tabony~\cite{Tabony} that 
Okada's Theorem~\ref{the-Okada} can be generalised in both these ways. His result can be stated 
in the following way.

\begin{Theorem}[Tabony~\cite{Tabony}]\label{the-Tabony}
For $n\in\N$ let $\x=(x_1,x_2,\ldots,x_{n})$ be a sequence of non-zero indeterminates,
$\t=(t_1,t_2,\ldots,t_n)$ be a sequence of arbitrary parameters, and $\delta=(n,n-1,\ldots,1)$. 
For any $\lambda=\mu+\delta$, where $\mu$ is a partition of length $\ell(\mu)\leq n$
and $\lambda_1=m\geq n$, let ${\cal B}^\lambda_n$ be the set of all $2n\times m$ 
$\lambda$-HTSASMs. For each $A\in{\cal B}^\lambda_n$ let $C$ be the corresponding $2n\times m$
CPM with matrix elements $c_{ij}$ and let
\begin{equation}
   \wgt(A) =  \prod_{i=1}^n  x_i^{n-m-i}\    \prod_{i=1}^{2n}\prod_{j=1}^m  \wgt(c_{ij})\,,
\end{equation}
with $\wgt(c_{ij})$ as tabulated below: 
\begin{equation}\label{tab-Tabony}
\begin{array}{|l|l|l|}
\hline
\hbox{Entry}&i\leq n&i>n\cr
\hbox{at $(i,j)$}&j\leq n&j\leq n\cr
\hline
\WE&\sqrt{-1}\,x_i&x_{2n+1-i}\cr
\NS&-\sqrt{-1}\,(1-t_i^2)&(1-t_{2n+1-i}^2)\cr
\NE&\sqrt{-1}\,t_i&\sqrt{-1}\,t_{2n+1-i}x_{2n+1-i}\cr
\SE&1&x_{2n+1-i}\cr
\NW&x_i&1\cr
\SW&\sqrt{-1}\,t_i\,x_i&\sqrt{-1}\,t_{2n+1-i}\cr
\hline
\end{array}
\end{equation}

Then 
\begin{equation}\label{eqn-Tok-bn}
      \sum_{A\in{\cal B}^\lambda_n} \ \wgt(A) = \sum_{A\in{\cal B}^\delta_n} \  \wgt(A) \ \ \Phi_{B_n}^\mu(\x;\t)\,,
\end{equation}
where
\begin{equation}
			\sum_{A\in{\cal B}^\delta_n} \wgt(A) = 
			\prod_{i=1}^n (1-t_ix_i)\ \prod_{1\leq i<j\leq n} (1-t_it_jx_ix_j)(1-t_it_jx_ix_j^{-1})\,,
\end{equation}
and $\Phi_{B_n}^\mu(\x;\t)$ is symmetric under all permutations of $t_ix_i$ for $i=1,2,\ldots,n$ 
and any combination of inversions $x_i\mapsto 1/x_i$. 
\end{Theorem}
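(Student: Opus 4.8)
The plan is to recognise $\sum_{A\in{\cal B}^\lambda_n}\wgt(A)$ as the partition function of a solvable U-turn square ice model and to read the factorisation off from its Yang--Baxter and reflection symmetries, using the CPM/SIC dictionary of Section~\ref{sec:htsasm}. Write $Z_\lambda(\x;\t)=\sum_{A\in{\cal B}^\lambda_n}\wgt(A)$. Each term is the monomial $\prod_{i=1}^n x_i^{n-m-i}$ times a product of local Boltzmann weights read from (\ref{tab-Tabony}) over the $2n\times m$ U-turn SIC attached to $A$; the boundary edges are frozen (all up along the top, all left along the right, and down along the bottom precisely in the columns indexed by $\lambda_1,\dots,\lambda_n$, up in every other bottom column), and the U-turns on the left couple rows $i$ and $2n+1-i$, both governed by the parameter pair $(x_i,t_i)$ but through the two halves of the table (\ref{tab-Tabony}). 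The base case $\lambda=\delta$ (i.e.\ $\mu=0$) I would handle directly: then every bottom column carries a down arrow, which constrains the admissible configurations tightly enough that a row-pair by row-pair unravelling yields $\sum_{A\in{\cal B}^\delta_n}\wgt(A)=\prod_{i=1}^n(1-t_ix_i)\prod_{1\le i<j\le n}(1-t_it_jx_ix_j)(1-t_it_jx_ix_j^{-1})$, the promised product formula and the extension of Okada's Theorem~\ref{the-Okada} to independent parameters $t_i$.

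The engine of the argument is the free-fermion character of the weights in (\ref{tab-Tabony}). First, there is an $R$-matrix $R_{ij}=R(x_i,t_i;x_j,t_j)$ satisfying the star--triangle (Yang--Baxter) relation with these weights; the usual train argument --- inserting $R_{ij}$ at the right boundary, pushing it across the lattice, around the U-turn and back, and using that $R_{ij}$ is diagonal on the frozen boundary states --- shows that $Z_\lambda$, once the prefactor is stripped, is invariant under interchanging the parameter data of two consecutive lines, up to an explicit monomial coming from the normalisation of $R_{ij}$. Second, one builds a $K$-matrix for the U-turn obeying a boundary Yang--Baxter (reflection) equation compatible with $R_{ij}$; its insertion shows $Z_\lambda$ is invariant under the inversion $x_i\mapsto 1/x_i$, again up to an explicit monomial. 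The prefactor $\prod_i x_i^{n-m-i}$ is precisely what absorbs these monomials, so that --- once divisibility is known --- the quotient $Z_\lambda/Z_\delta$ acquires the full hyperoctahedral symmetry asserted in the theorem: symmetric under permutations of the $t_ix_i$ and under every inversion $x_i\mapsto 1/x_i$. One then sets $\Phi^\mu_{B_n}(\x;\t):=Z_\lambda/Z_\delta$.

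It remains to establish that $Z_\lambda$ is divisible, as a Laurent polynomial in $x_1^{\pm1},\dots,x_n^{\pm1}$, by $Z_\delta$. Using the Yang--Baxter relation to merge two horizontal lines whose parameters are dual, I would show that $Z_\lambda$ vanishes identically on each of the hyperplanes $t_ix_i=1$, $t_it_jx_ix_j=1$ and $t_it_jx_ix_j^{-1}=1$, hence is divisible by each linear factor of $Z_\delta$; a count of the extreme powers of each $x_i$ occurring in $Z_\lambda$ (bounded above and below from the $\lambda$-specific boundary conditions) then shows the quotient is a Laurent polynomial with no spurious factor left over. Together with the previous paragraph, this finishes the proof.

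I expect the main obstacle to be the reflection step. Unlike the general linear (no U-turn) case, the weights in (\ref{tab-Tabony}) genuinely differ between the top half $i\le n$ and the bottom half $i>n$, so one must construct U-turn boundary weights and verify the reflection equation consistently with both halves and with $R_{ij}$, while keeping track of the factors of $\sqrt{-1}$ and the monomial normalisations so that they cancel exactly against $\prod_i x_i^{n-m-i}$. A secondary, more technical point is pinning down the precise $x_i$-degree bounds needed to exclude spurious factors in the divisibility step. An alternative, which sidesteps the reflection equation, would be to obtain Tabony's identity as a specialisation of the determinantal Theorem~\ref{the-Pdet} by way of the non-intersecting lattice paths of Sections~\ref{sec:PSTandLPs}--\ref{sec:det}.
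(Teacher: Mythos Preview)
Your Yang--Baxter/reflection-equation outline is essentially Tabony's own argument and is broadly sound; the paper, however, takes a genuinely different route. Rather than arguing with $R$- and $K$-matrices, the paper first establishes the more general $B'_n$ result (Corollary~\ref{cor-Tok-bnprime}) via primed shifted tableaux and non-intersecting lattice paths: Lemma~\ref{lem-AP} converts the ASM sum into a sum over primed shifted tableaux, Theorem~\ref{the-Pdet} expresses that sum as a single $n\times n$ determinant, and Lemma~\ref{lem-deth} evaluates the determinant explicitly as $Z(\z)K(\z,\q)Q(\q)$. The Tokuyama-type factorisation (Theorem~\ref{the-result2}) then drops out by Cauchy and Littlewood expansions. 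Tabony's Theorem~\ref{the-Tabony} is finally recovered in Section~\ref{sec:corollaries} by the specialisation $x_i\mapsto It_ix_i$, $z_0\mapsto I$, $\ov{y}_i\mapsto It_i\ov{x}_i$ (table~(\ref{tab-theorems})), the choice $z_0=I$ killing the central row and collapsing ${\cal B'}^\lambda_n$ to ${\cal B}^\lambda_n$ as in Corollary~\ref{cor-Tok-bn}. In other words, the ``alternative'' you mention in your last sentence \emph{is} the paper's proof.

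The trade-off is this: your approach is self-contained at the level of the six-vertex model and proves exactly what Tabony states, namely the factorisation and the hyperoctahedral symmetry of the quotient $\Phi^\mu_{B_n}$, but it does not identify $\Phi^\mu_{B_n}$. The paper's determinantal route gives strictly more: it pins down $\Phi^\mu_{B_n}(\x;\t)=I^{|\mu|}\sum_{\gamma\in{\cal C}}(-1)^{|\gamma|/2}s_{\mu/\gamma}(\z)=I^{|\mu|}\,so_\mu(\z)$, a deformed orthogonal character, from which the claimed symmetries are immediate. It also avoids the two places where your sketch is thinnest --- verifying the reflection equation for the specific U-turn weights with independent $t_i$, and the divisibility/degree bookkeeping needed to conclude that $Z_\lambda/Z_\delta$ is a genuine Laurent polynomial --- at the cost of the determinantal machinery of Sections~\ref{sec:LPresult}--\ref{sec:det}.
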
 

In the same way, with the introduction of still more parameters as inspired by a suggestion of Brubaker~\cite{Brubaker} 
(see also~\cite{BrubakerSchultz}), we were led to conjecture the validity of the following
generalisation of Simpson's Theorem~\ref{the-Simpson}:
\begin{Theorem}\label{the-result1}
For $n\in\N$ let $\x=(x_1,x_2,\ldots,x_{n})$, $\y=(y_1,y_2,\ldots,y_{n})$, $\s=(s_1,s_2,\ldots,s_n)$
and $\t=(t_1,t_2,\ldots,t_n)$ be four sequences of non-zero parameters, 
and let $\z=(s_1x_1,\ldots,s_nx_n,z_0,t_1y_1^{-1},\ldots,t_ny_n^{-1})$, with $z_0$ a
further non-zero parameter. For $\delta=(n,n-1,\ldots,1)$
and any $\lambda=\mu+\delta$, where $\mu$ is a partition of length $\ell(\mu)\leq n$
and $\lambda_1=m\geq n$, let ${\cal B'}^\lambda_n$ be the set of all $(2n+1)\times m$ 
$\lambda$-HTSASMs. For each $A\in{\cal B'}^\lambda_n$ let $C$ be the corresponding $(2n+1)\times m$
CPM with matrix elements $c_{ij}$ and let
\begin{equation}
   \wgt(A) =  \prod_{i=1}^n  y_i^{n-m-i}\    \prod_{i=1}^{2n+1}\prod_{j=1}^m  \wgt(c_{ij})\,,
\end{equation}
with $\wgt(c_{ij})$ as tabulated below: 
\begin{equation}\label{tab-bnprime}
\begin{array}{|l|l|l|l|}
\hline
\hbox{Entry}&i<n+1&i=n&i>n+1\cr
\hbox{at $(i,j)$}&j\geq1&j\geq1&j\geq1\cr
\hline
\WE&y_i&1&y_{2n+2-i}\cr
\NS&1+s_ix_it_iy_i^{-1}&1+z_0^2&1+s_{2n+2-i}x_{2n+2-i}t_{2n+2-i}y^{-1}_{2n+2-i}\cr
\NE&t_i&u_0&s_{2n+2-i}x_{2n+2-i}\cr
\SE&1&1&y_{2n+2-i}\cr
\NW&y_i&1&1\cr
\SW&s_ix_i&z_0&t_{2n+2-i}\cr
\hline
\end{array}
\end{equation}

Then 
\begin{equation}\label{eqn-Tok-bnprime-Phi}
      \sum_{A\in{\cal B'}^\lambda_n} \ \wgt(A) = \sum_{A\in{\cal B'}^\delta_n} \  \wgt(A) \ \ \Phi_{B'_n}^\mu(\z)\,,
\end{equation}
where 
\begin{equation}
			\sum_{A\in{\cal B'}^\delta_n} \wgt(A) = 
			\prod_{i=1}^n (1+z_0s_ix_i)\ \prod_{1\leq i<j\leq n} (1+s_is_jx_ix_j)(1+s_it_jx_iy_j^{-1})\,,
\end{equation} 
and $\Phi_{B'_n}^\mu(\z)$ is symmetric under all permutations of the $2n+1$ components of $\z$.
\end{Theorem}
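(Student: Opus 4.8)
\emph{Proof strategy.} The plan is to derive Theorem~\ref{the-result1} as a corollary of the two principal results of Section~\ref{sec:Tok}, Theorems~\ref{the-result2} and~\ref{the-result3}, which are established there not for HTSASMs directly but for the primed shifted tableaux introduced in Section~\ref{sec:PSTandLPs}. Accordingly the first step is to record a chain of weight-preserving bijections: a $\lambda$-HTSASM $A$ of type~${\cal B'}_n$ determines its compass point matrix, which determines a U-turn square ice configuration, whose ``up'' vertical edges trace out a family of non-intersecting lattice paths indexed by the parts of $\lambda$ --- each entering along the lower boundary, crossing the grid, making the U-turn on the left, and leaving again along the lower boundary --- and recording the columns at which such a path steps horizontally, with a prime attached to steps taken after the U-turn, gives a primed shifted tableau of shifted shape $\lambda$. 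One then checks that the multiplicative ${\cal B'}_n$ weight of Table~\ref{tab-bnprime}, being a product of local vertex weights, is carried by this correspondence to the monomial weight attached to the primed tableau in Section~\ref{sec:PSTandLPwgts}, up to the global prefactor $\prod_i y_i^{n-m-i}$, which absorbs a corresponding normalisation on the tableau side. It is precisely this matching that forces the particular entries of Table~\ref{tab-bnprime} --- the assignments $z_0$, $u_0$, $1+z_0^2$ in the central row and the pairings $s_ix_i$, $t_iy_i^{-1}$, $1+s_ix_it_iy_i^{-1}$ in the two halves --- so that the tableau weight depends on $\z=(s_1x_1,\dots,s_nx_n,z_0,t_1y_1^{-1},\dots,t_ny_n^{-1})$ in the way the orthogonal group requires. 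With this dictionary in place, $\sum_{A\in{\cal B'}^\lambda_n}\wgt(A)$ is identified with the primed-shifted-tableau generating function appearing in Theorems~\ref{the-result2}/\ref{the-result3}, and Theorem~\ref{the-result1} becomes their transcription.

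To prove Theorems~\ref{the-result2}/\ref{the-result3} themselves I would follow the classical template for Tokuyama-type identities. The non-intersecting lattice path model, via the Lindstr\"om--Gessel--Viennot lemma, converts the generating function over primed shifted tableaux of shape $\lambda=\mu+\delta$ into a single $n\times n$ determinant whose $(i,j)$ entry is an explicit polynomial in the variables $\z$ indexed by $\lambda_i$; this is Theorem~\ref{the-Pdet}, to be proved combinatorially in Section~\ref{sec:LPresult} by the standard sign-reversing involution on configurations containing an intersecting pair of paths. That determinant is then evaluated in Section~\ref{sec:det}: one extracts common monomial factors from the rows and recognises the remaining determinant as a Vandermonde-type alternant in the components of $\z$, so that it factors completely. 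Specialising to $\mu=0$, i.e.\ $\lambda=\delta$, the factorisation collapses to the asserted product $\prod_{i=1}^n(1+z_0s_ix_i)\prod_{1\leq i<j\leq n}(1+s_is_jx_ix_j)(1+s_it_jx_iy_j^{-1})$ for $\sum_{A\in{\cal B'}^\delta_n}\wgt(A)$; for general $\mu$ the residual factor, the ratio of the full determinant to this deformed Weyl denominator, is by definition $\Phi_{B'_n}^\mu(\z)$, which gives~(\ref{eqn-Tok-bnprime-Phi}).

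It then remains to show that $\Phi_{B'_n}^\mu(\z)$ is symmetric under all permutations of the $2n+1$ components of $\z$. This is not visible from the $n\times n$ determinant; instead, using the Schur-function and orthogonal-character preliminaries of Section~\ref{sec:characters}, the determinant quotient is re-expressed --- via a determinantal (Weyl- and Jacobi--Trudi-type) formula for the characters of $SO(2n+1)$ --- as a deformation of the character of the irreducible $SO(2n+1)$-module labelled by $\mu$, realised as a genuine symmetric function of the $2n+1$ spectral parameters; in that form the $S_{2n+1}$-symmetry is immediate. Equivalently, it is the Weyl-character-formula phenomenon: the numerator determinant and the $\mu=0$ determinant transform identically under transpositions of the $z_i$, so their quotient is invariant.

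I expect the genuine obstacle to be the construction of the lattice path model: getting the U-turn together with the distinguished central column right, and verifying that the weights of Table~\ref{tab-bnprime} assemble into exactly the determinant that the character formula of Section~\ref{sec:characters} can recognise. In particular one must see that the ``doubling'', in which column $i$ of the HTSASM is coupled to column $N{+}1{-}i$ and the parameters $s_i$, $t_i$, $z_0$, $u_0$ conspire, genuinely upgrades the evident symmetry in the $n$ variables to the full symmetry in the $2n+1$ variables $\z$ demanded by the orthogonal group. Tracking signs and the prefactor $\prod_i y_i^{n-m-i}$ through the successive bijections is, by comparison, routine bookkeeping.
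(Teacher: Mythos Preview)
Your overall architecture is correct and matches the paper: pass from $\lambda$-HTSASMs to primed shifted tableaux (Lemma~\ref{lem-AP}), express the tableau generating function as an $n\times n$ determinant via lattice paths (Theorem~\ref{the-Pdet}), evaluate that determinant (Lemma~\ref{lem-deth}), and then recognise the quotient as a symmetric function of $\z$ (Theorem~\ref{the-result2}), from which Theorem~\ref{the-result1} follows by the substitution $x_i\mapsto s_ix_i$, $y_i\mapsto \ov t_i y_i$ (Section~\ref{sec:corollaries}). Only Theorem~\ref{the-result2} is needed; Theorem~\ref{the-result3} is the even-row $B_n$ analogue.

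Two points where your description diverges from what the paper actually does are worth flagging. First, the passage from $A$ to primed shifted tableaux is \emph{not} a weight-preserving bijection: the map $A\mapsto T$ to \emph{unprimed} shifted tableaux is bijective, but each $T$ then gives rise to $2^{\neg(A)}$ primed tableaux $P$, one for each way of resolving the $\NS$ weights $x_i+y_i$ (etc.) into their two summands. Thus $\wgt(A)$ equals the \emph{sum} of $\wgt(P)$ over those $P$, and this is exactly how the binomial factors in the $\NS$ entries of Table~(\ref{tab-bnprime}) get distributed. Relatedly, the lattice paths are not read off the square-ice grid; they live on a separate grid and are constructed row-by-row from $P$ (Figures~\ref{fig-PtoLP1}--\ref{fig-PtoLP3}). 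The determinant (\ref{eqn-Pdet}) is not a straight Lindstr\"om--Gessel--Viennot determinant either: because the $k$th path may start at level $k$ or at level $\ov k$, one first sums over the $2^n$ ``profiles'' of diagonal entries and only then combines the resulting determinants into a single one via row permutations, picking up the signs $(-1)^{n-k}$ in (\ref{eqn-hfg}).

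Second, and more substantively, the mechanism that produces the $S_{2n+1}$-symmetry of $\Phi^\mu_{B'_n}(\z)$ is not a Weyl/Jacobi--Trudi formula for $SO(2n+1)$ characters, nor an argument that numerator and denominator determinants ``transform identically'' under permutations of $\z$ (they do not, being $n\times n$ with rows tied to $k=1,\ldots,n$). Instead, the evaluated determinant factors as $Z(\z)\,K(\z,\q)\,Q(\q)$ with $K(\z,\q)=\prod_{i,j}(1-z_iq_j)^{-1}$ the Cauchy kernel and $Q(\q)=a_\delta(\q)\sum_{\gamma\in{\cal C}}s_\gamma(\q)$ via Littlewood's identity; expanding $K$ by Cauchy, using Littlewood--Richardson to convert $s_\sigma(\q)s_\gamma(\q)$ into $s_\tau(\q)$ and dually $s_\sigma(\z)$ into $s_{\tau/\gamma}(\z)$, and then extracting $[\q^\lambda]$ from $a_{\tau+\delta}(\q)$ forces $\tau=\mu$ and yields $\Phi^\mu_{B'_n}(\z)=\sum_{\gamma\in{\cal C}}s_{\mu/\gamma}(\z)$ explicitly. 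Symmetry in $\z$ is then immediate because each $s_{\mu/\gamma}(\z)$ is a symmetric function. This coefficient-extraction step, with the auxiliary variables $\q$ carrying the shape $\lambda$, is the crux of the argument and is not visible in your sketch.
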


The precise identification of $\Phi_{B_n}^\mu(\z)$ and $\Phi_{B'_n}^\mu(\z)$ will be seen to emerge as 
a by-product of our proof of the validity of these theorems. In order to construct this proof it 
is necessary to introduce two further combinatorial entities, namely primed shifted tableaux (PSTs) 
and corresponding lattice path configurations (LPCs).
To prepare the ground for the use of these it is advantageous to rewrite the above statement of $\wgt(A)$ in terms
of entries $c_{ij}$ in an alternative form. 

First, for given $A$ and corresponding $(2n+1)\times m$ compass point matrix $C$ let $L_i$ be the total number 
of entries $\WE$, $\NW$ or $\SW$
that lie in the first column of $C$ above the entry $c_{i1}$ for $i=1,2,\ldots,2n+1$.
Then let $\#\XY_i$ be the number of pairs $\XY$ of compass point directions that appear in the $i$th row of $C$.
It can then be seen that
\begin{equation}\label{eqn-wenese}
\begin{array}{rcl}
    \#\WE_i &=& \#\NS_i+L_{i+1}-L_{i} \,;\cr
		\#\NE_i &=& L_i - \#\NS_i -\#\NW_i \,; \cr
		\#\SE_i &=& m-L_{i+1} -\#\NS_i - \#\SW_i \,.\cr
\end{array}
\end{equation}
The first of these identities follows from the fact that consecutive entries $1$ in row $i$ of the cumulative row 
sum matrix of $A$ give rise to a consecutive sequence of compass point entries $\WE$, $\SW$ or $\NW$ in row $i$ of $C$. 
Any entry to the immediate left of such a sequence is automatically $\NS$. However it will not be present if the 
leftmost entry in the first column is any one of $\WE$, $\SW$ or $\NW$. In such a case $L_{i+1}-L_i$ will be $1$. Hence the result follows.
The second identity follows from the fact that the column sum of entries of $A$ above the position of any
$\NS$, $\NE$ and $\NW$ in row $i$ of $C$ is necessarily $1$, and those of entries above the positions of $\WE$, $\SE$ and $\SW$ is $0$. 
The sum of all these column sums is therefore $\#\NS_i+\#\NE_i+\#\NW_i$, and this must coincide with the
sum of all row sums of $A$ for all rows above the $i$th. But this is just $L_i$ since a row sum of entries in 
$A$ is $1$ if the leftmost entry in $C$ is $\WE$, $\SW$ or $\NW$, and $0$ otherwise. 
The third identity is then just a consequence of the total number of entries in row $i$ being $m$.

In addition it should be noted that thanks to the half-turn symmetry of a $(2n+1)\times m$ $\lambda$-HTSASM we have 
\begin{equation}\label{eqn-Lsymmetry}
   L_{2n+2-i}=n-i+L_{i+1}
\end{equation}
for all $i=1,2,\ldots,2n+1$.

Using these identities the specification of weights in Theorem~\ref{the-result1} can be
re-expressed in the form
\begin{equation}
   \wgt(A) =  \prod_{i=1}^n (s_ix_i)^{n-i}  (s_ix_it_i\ov{y}_i)^{L_i} (z_0s_ix_i)^{L_{i+1}-L_i}\    \prod_{i=1}^{2n+1}\prod_{j=1}^m  \wgt(c_{ij})\,,
\end{equation}
with $\wgt(c_{ij})$ as tabulated below: 
\begin{equation}\label{tab-sxytz}
\begin{array}{|l|l|l|l|}
\hline
\hbox{Entry}&i<n+1&i=n&i>n+1\cr
\hbox{at $(i,j)$}&j\geq1&j\geq1&j\geq1\cr
\hline
\WE&1&1&1\cr
\NS&s_ix_i+\ov{t}_iy_i&z_0+\ov{z}_0&\ov{s}_{2n+2-i}\ov{x}_{2n+2-i}+t_{2n+2-i}\ov{y}_{2n+2-i}\cr
\NE&1&1&1\cr
\SE&1&1&1\cr
\NW&\ov{t}_i\,y_i&\ov{z}_0&\ov{s}_{2n+2-i}\ov{x}_{2n+2-i}\cr
\SW&s_i\,x_i&z_0&t_{2n+2-i}\ov{y}_{2n+2-i}\cr
\hline
\end{array}
\end{equation}
Here we have adopted a notation that will henceforth be used throughout the paper; 
namely the notation $\ov{z}=z^{-1}$ for any $z$.

\section{Primed shifted tableaux and lattice paths}
\label{sec:PSTandLPs}
Alternating sign matrices, compass point matrices and square ice configurations are three 
combinatorial objects that describe the same thing. There are at least two more types 
of combinatorial objects in this family: shifted tableaux~\cite{Macdonald,HKWeyl} 
and monotone triangles~\cite{Okadapartial} that are themselves equivalent to strict Gelfand-Tsetlin patterns~\cite{Tokuyama}. 
Of these we will focus our attention on what we call
unprimed shifted tableaux as they are the precursor to what we really need, 
namely primed shifted tableaux~\cite{Macdonald,HKWeyl}, and through them sets of non-intersecting lattice paths.

\begin{Definition}\label{def-T}
For $n\in\N$ let $\lambda=(\lambda_1, \lambda_2, \ldots, \lambda_n)$ be a partition with $n$ distinct non-zero parts. 
Then the set ${\cal T}^\lambda(2n+1)$ of all unprimed shifted tableaux $T$ of shape $\lambda$ over the alphabet
$$
1 < 2 < \cdots < n < 0 < \ov{n} < \cdots < \ov2 < \ov1
$$ 
consists of those $T$ which are an array of $n$ rows of boxes of lengths $\lambda_i$ for 
$i=1,2,\ldots,n$ left adjusted to a diagonal line
in which each box contains an entry from the above alphabet subject to the rules: 
\begin{itemize}
 \item entries weakly increase across rows from left to right and down columns from top to bottom;
 \item entries strictly increase down diagonals from top-left to bottom right; 
 \item exactly one of $\{k, \ov{k}\}$ appears on the main diagonal for all $k=1,2,\ldots,n$.
\end{itemize}
\end{Definition}

It might be noted that these rules imply that $0$ cannot appear as an entry on the main diagonal.
Such $T\in{\cal T}^\lambda(2n+1)$ are in bijective correspondence with all $\lambda$-HTSASMs $A\in{\cal B'}^\lambda_n$.
Quite generally the passage from an alternating sign matrix $A$ to a corresponding unprimed shifted tableau $T$
with entries from an alphabet $(e_1<e_2<\cdots\ )$ may be accomplished as follows. 
For each $A$ draw up a matrix $B$ whose entries are the right to left accumulated
row sums of the elements of $A$. That is to say $b_{ij}=\sum_{k\geq j} a_{ik}$. Then for each $j$ the entries in the 
$j$th diagonal of the shifted tableaux $T$ are found by reading down the $j$th column of $B$ from top to bottom.
If the $r$th non-zero entry $1$ appears in row $k=i_r$ of $B$ then one places $e_k$ in the $r$th position down 
the $j$th diagonal of $T$. 

For example, in the case $n=3$, $\lambda=(8,6,3)$, and alphabet 
$1<2<3<0<\ov3<\ov2<\ov1$ this correspondence between $A$ and $T$ is illustrated in Figure~\ref{fig-ABT-lambda}. 
It may be seen that the non-zero entries $1$ in the matrix $B$ of Figure~\ref{fig-ABT-lambda} 
correspond to the entries $\WE$, $\NW$ and $\SW$ in the compass point matrix of Figure~\ref{fig-ACS-lambda}
that is associated with the same $\lambda$-HTSASM $A$. It is these entries in $B$ that determine in a one-to-one way the 
entries of $T$.

\begin{figure*}
\begin{center}
\begin{equation*}
\begin{array}{c}
\begin{array}{c}
     1\\
     2\\
     3\\
     0\\
     \ov3\\
     \ov2\\
     \ov1\\
\end{array}
		\ A =
\left[
     \begin{array}{cccccccc}
     0 & 1 & 0 & 0 & 0 & 0 & 0 & 0\\
     0 &\ov1& 0 & 1 & 0 & 0 & 0 & 0\\
     1 & 0 & 0 & 0 & 0 & 0 & 0 & 0\\
   \ov1& 0 & 1 & \ov1 & 0 & 0 & 1 & 0\\
     0 & 0 & \ov1 & 0 & 1 & 0 & \ov1 & 1\\
     0 & 1 & 0 & 0 & \ov1 & 1 & 0 & 0\\
     0 & \ov1 & 1 & 0 & 0 & 0 & 0 & 0\\
     \end{array}\right]
\ B =
\left[
     \begin{array}{cccccccc}
     1 & 1 & 0 & 0 & 0 & 0 & 0 & 0\\
     0 & 0 & 1 & 1 & 0 & 0 & 0 & 0\\
     1 & 0 & 0 & 0 & 0 & 0 & 0 & 0\\
     0 & 1 & 1 & 0 & 1 & 1 & 1 & 0\\
     0 & 0 & 0 & 1 & 1 & 0 & 0 & 1\\
     1 & 1 & 0 & 0 & 0 & 1 & 0 & 0\\
     0 & 0 & 1 & 0 & 0 & 0 & 0 & 0\\
     \end{array}\right]
\cr\cr		
T\  = \
{\vcenter
  {\offinterlineskip
 \halign{&\mystrut\vrule#&\mybox{\hss$#$\hss}\cr
  \hr{17}\cr
                      &1&&1 &&2   && 2 && 0 && 0 && 0 &&\ov{3}  &\cr     
  \hr{17}\cr
                      \omit& &&3&&  0 && 0 && \ov3 && \ov3 && \ov2  &\cr          
  \nr{2}&\hr{13}\cr
                      \omit& &\omit& &&\ov2   &&\ov2 &&\ov1 &\cr      
  \nr{4}&\hr{7}\cr    
 }}}
\end{array}
\end{equation*}
\end{center}
\caption{Example of the transition from a $\lambda$-HTSASM $A$ to its associated unprimed shifted tableau $T$
by way of a cumulative row sum matrix $B$ in the case $N=2n+1$ with $n=3$ and $\lambda=(8,6,3)$.}
\label{fig-ABT-lambda}
\end{figure*}

As we have indicated these unprimed shifted tableaux are just a precursor to what we need, that is to say,
the set ${\cal P}^\lambda(2n+1)$ of primed shifted tableaux $P$. These are defined by:

\begin{Definition}\label{def-P}
For $n\in\N$ let $\lambda=(\lambda_1, \lambda_2, \ldots, \lambda_n)$ be a partition with $n$ distinct non-zero parts. 
Then the set ${\cal P}^\lambda(2n+1)$ of all primed shifted tableaux $P$ of shape $\lambda$ over the alphabet
$$
1'< 1 < 2' < 2 < \cdots < n' < n < 0 < \ov{n}' < \ov{n} < \cdots < \ov2' < \ov2 < \ov1' < \ov1
$$ 
consists of those $P$ that may be obtained from all $T$ in ${\cal T}^\lambda(2n+1)$ over the unprimed subset of
this alphabet by adding primes to entries in such a way that
\begin{itemize}
 \item entries weakly increase across rows from left to right and down columns from top to bottom;
 \item entries strictly increase down diagonals from top-left to bottom right; 
 \item no two identical primed entries appear in the same row;
 \item no two identical unprimed entries appear in the same column;
 \item no primes appear on the main diagonal.
\end{itemize}
\end{Definition}

Moreover, as we made explicit in \cite{HKbij} pp.\ 450-2, (albeit with a different convention for the labelling of compass point matrix elements) there is an association between entries $\SW$, $\NW$ and $\NS$ in the compass point matrix $C$ and primed and unprimed entries in the primed shifted tableau $P$.  Specifically, for an alphabet $(e_1<e_2<\cdots\ )$ of entries in a shifted tableau $T$, an entry $\SW$ in the $i$th row of $C$ is associated with an entry $e_i$ in $P$ that must remain unprimed, while an entry $\NW$ in the $i$th row of $C$ is associated with an entry $e'_i$ that must be primed. On the other hand, an entry $\NS$ in the $i$th row of $C$ may be associated with an entry $e_i$ or $e'_i$ in $P$. It corresponds to the first entry in a strip of $e_i$s in $T$ not starting on the main diagonal, and this entry has the freedom to be primed or unprimed in $P$.  Expressed more precisely, the entries in column $j$ of $C$ are associated with the entries in diagonal $j+1$ of $P$. This offset is related to the fact that each entry $\NS$ in row $i$ of $C$ is itself associated with an entry $\WE$ in the same row, and separated from it by a sequence of $\SW$ and $\NW$ entries. We have chosen to associate entries $e_i$ and $e'_i$ with $\NS$ rather than $\WE$ for reasons that will become clear when weights are assigned to these entries. This leaves the unprimed entries $e_i$ on the main diagonal of $P$ to be obtained from $C$. This time the association is such that each entry $\WE$, $\SW$ or $\NW$ in row $i$ and column $1$ of $C$ is associated with an unprimed entry $e_i$ on the main diagonal of $P$.

The final combinatorial objects required here are sets of non-intersecting lattice paths. These are constructed in such a 
way that they are in bijective correspondence with primed shifted tableaux. The lattice path grid is the set of 
points $(j,i)$ with $i=1,2,\ldots,n,0,\ov{n},\ldots,\ov2,\ov1,\ov0$ and $j=1,2,\ldots,m$, 
augmented by a line of points $(-n+k,0)$ with $k=1,2,\ldots,n$. For given strict partition $\lambda$ of length $\ell(\lambda)=n$
there are $n$ lattice paths extending from $(-n+k,0)$, via either $(k,1)$ or
$(\ov{k},1)$ by way of a curved edge, and thereafter to $(\lambda_i,\ov0)$ for some $i$ by way of horizontal left to right, vertical top to bottom or 
diagonal top left to bottom right edges for $k=1,2,\ldots,n$, as illustrated in Figure~\ref{fig-PtoLP1}. 
The map from each primed shifted tableaux $P$ with entries $e_1,e_2,\ldots,e_{\lambda_i}$ in row $i$
is such that the $i$th lattice path is comprised of a curved edge terminating at $(e_1,1)$, 
and either a horizontal or a downward diagonal edge terminating at $(e_j,j)$ according as $e_j$ is unprimed or primed, respectively,
for $j=2,\ldots,\lambda_i$, together with precisely those vertically downward edges that make the path continuous 
and such that it terminates at $(\lambda_i,\ov0)$.

\begin{figure*}[htbp]
\begin{center}
\begin{equation*}
\begin{array}{cc}
{\vcenter
  {\offinterlineskip
 \halign{&\mystrut\vrule#&\mybox{\hss$#$\hss}\cr
  \hr{17}\cr
                      &1&&1 &&2'   && 2 && 0 && 0 && 0 &&\ov{3}'  &\cr     
  \hr{17}\cr
                      \omit& &&3&&  0' && 0 && \ov3 && \ov3 && \ov2'  &\cr          
  \nr{2}&\hr{13}\cr
                      \omit& &\omit& &&\ov2   &&\ov2 &&\ov1' &\cr      
  \nr{4}&\hr{3}\cr                                                                                                                                         \omit& 
  \nr{6}&\hr{3}\cr  
 }}}
& \qquad 
\vcenter{\hbox{
\begin{tikzpicture}[x={(0in,-0.2in)},y={(0.2in,0in)}] 
\foreach \i in {1,...,7}
\foreach \j in {1,...,8}\normalsize
\draw(\i,\j)node{$\bullet$};
\foreach \j in {-2,...,0}
\draw(4,\j)node{$\bullet$};
\foreach \j in {1,...,8}
\draw(8,\j)node{$\bullet$};
\draw(1,-3)node{$1$};
\draw(2,-3)node{$2$};
\draw(3,-3)node{$3$};
\draw(4,-3)node{$0$};
\draw(5,-3)node{$\ov3$};
\draw(6,-3)node{$\ov2$};
\draw(7,-3)node{$\ov1$};
\draw(8,-3)node{$\ov0$};
\draw(9,1)node{$1$};
\draw(9,2)node{$2$};
\draw(9,3)node{$3$};
\draw(9,4)node{$4$};
\draw(9,5)node{$5$};
\draw(9,6)node{$6$};
\draw(9,7)node{$7$};
\draw(9,8)node{$8$};

\draw[draw=black,thick,out=90,in=180] (4,-2)to(1,1);
\draw[draw=black,thick] (1,1)to(1,2);
\draw[draw=black,thick]  (1,2)to(2,3);
\draw[draw=black,thick] (2,3)to(2,4);
\draw[draw=black,thick] (2,4)to(4,4);
\draw[draw=black,thick] (4,4)to(4,7);
\draw[draw=black,thick]  (4,7)to(5,8);
\draw[draw=black,thick] (5,8)to(8,8);

\draw[draw=black,thick,out=90,in=180] (4,0)to(3,1);
\draw[draw=black,thick] (3,1)to(4,2);
\draw[draw=black,thick] (4,2)to(4,3);
\draw[draw=black,thick]  (4,3)to(5,3);
\draw[draw=black,thick] (5,3)to(5,5);
\draw[draw=black,thick]  (5,5)to(6,6);  
\draw[draw=black,thick] (6,6)to(8,6);

\draw[draw=black,thick,out=270,in=180] (4,-1)to(6,1);
\draw[draw=black,thick] (6,1)to(6,2);
\draw[draw=black,thick]  (6,2)to(7,3);
\draw[draw=black,thick](7,3)to(8,3); 
\end{tikzpicture}
}}
\end{array}
\end{equation*}
\end{center}
\caption{Example of  the lattice paths for a given primed shifted tableau.}
\label{fig-PtoLP1}
\end{figure*}
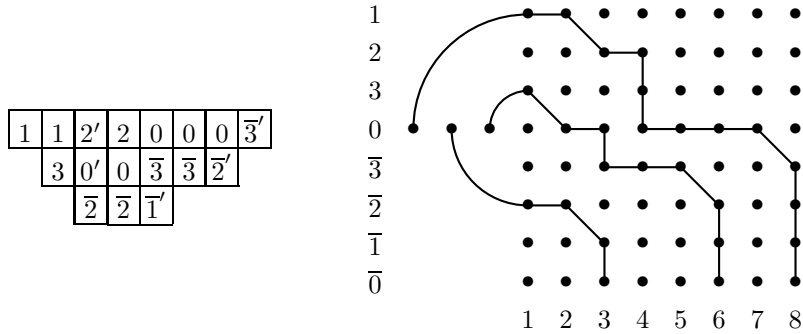

One can see in this example that the leftmost $\ov3$ in the primed shifted tableau could
equally well be primed. If it is primed then the lattice paths remain non-intersecting
as shown in Figure~\ref{fig-PtoLP2}.
On the other hand if the leftmost $\ov3$ is replaced by a $0$ the resulting primed shifted tableau is non-standard
and the corresponding lattice paths intersect as shown in Figure~\ref{fig-PtoLP3}.

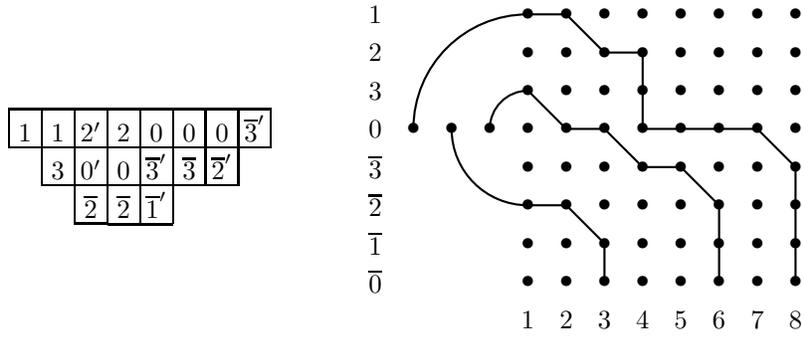
\begin{figure*}[htbp]
\begin{center}
\begin{equation*}
\begin{array}{cc}
{\vcenter
  {\offinterlineskip
 \halign{&\mystrut\vrule#&\mybox{\hss$#$\hss}\cr
  \hr{17}\cr
                      &1&&1 &&2'   && 2 && 0 && 0 && 0 &&\ov{3}'  &\cr     
  \hr{17}\cr
                      \omit& &&3&&  0' && 0 && \ov3' && \ov3 && \ov2'  &\cr          
  \nr{2}&\hr{13}\cr
                      \omit& &\omit& &&\ov2   &&\ov2 &&\ov1' &\cr      
  \nr{4}&\hr{3}\cr                                                                                                                                         \omit& 
  \nr{6}&\hr{3}\cr  
 }}}
& \qquad 
\vcenter{\hbox{
\begin{tikzpicture}[x={(0in,-0.2in)},y={(0.2in,0in)}] 
\foreach \i in {1,...,7}
\foreach \j in {1,...,8}\normalsize
\draw(\i,\j)node{$\bullet$};
\foreach \j in {-2,...,0}
\draw(4,\j)node{$\bullet$};
\foreach \j in {1,...,8}
\draw(8,\j)node{$\bullet$};
\draw(1,-3)node{$1$};
\draw(2,-3)node{$2$};
\draw(3,-3)node{$3$};
\draw(4,-3)node{$0$};
\draw(5,-3)node{$\ov3$};
\draw(6,-3)node{$\ov2$};
\draw(7,-3)node{$\ov1$};
\draw(8,-3)node{$\ov0$};
\draw(9,1)node{$1$};
\draw(9,2)node{$2$};
\draw(9,3)node{$3$};
\draw(9,4)node{$4$};
\draw(9,5)node{$5$};
\draw(9,6)node{$6$};
\draw(9,7)node{$7$};
\draw(9,8)node{$8$};

\draw[draw=black,thick,out=90,in=180] (4,-2)to(1,1);
\draw[draw=black,thick] (1,1)to(1,2);
\draw[draw=black,thick]  (1,2)to(2,3);
\draw[draw=black,thick] (2,3)to(2,4);
\draw[draw=black,thick] (2,4)to(4,4);
\draw[draw=black,thick] (4,4)to(4,7);
\draw[draw=black,thick]  (4,7)to(5,8);
\draw[draw=black,thick] (5,8)to(8,8);

\draw[draw=black,thick,out=90,in=180] (4,0)to(3,1);
\draw[draw=black,thick] (3,1)to(4,2);
\draw[draw=black,thick] (4,2)to(4,3);
\draw[draw=black,thick]  (4,3)to(5,4);
\draw[draw=black,thick] (5,4)to(5,5);
\draw[draw=black,thick]  (5,5)to(6,6);  
\draw[draw=black,thick] (6,6)to(8,6);

\draw[draw=black,thick,out=270,in=180] (4,-1)to(6,1);
\draw[draw=black,thick] (6,1)to(6,2);
\draw[draw=black,thick]  (6,2)to(7,3);
\draw[draw=black,thick](7,3)to(8,3); 
\end{tikzpicture}
}}
\end{array}
\end{equation*}
\end{center}
\caption{Example of the lattice paths for a second shifted primed tableau.}
\label{fig-PtoLP2}
\end{figure*}

\begin{figure*}[htbp]
\begin{center}
\begin{equation*}
\begin{array}{cc}
\begin{array}{c}
{\vcenter
  {\offinterlineskip
 \halign{&\mystrut\vrule#&\mybox{\hss$#$\hss}\cr
  \hr{17}\cr
                      &1&&1 &&2'   && 2 && 0 && 0 && 0 &&\ov{3}'  &\cr     
  \hr{17}\cr
                      \omit& &&3&&  0' && 0 && 0 && \ov3 && \ov2'  &\cr          
  \nr{2}&\hr{13}\cr
                      \omit& &\omit& &&\ov2   &&\ov2 &&\ov1' &\cr      
  \nr{4}&\hr{7}\cr  
 }}}
\cr\cr
{\vcenter
  {\offinterlineskip
 \halign{&\mystrut\vrule#&\mybox{\hss$#$\hss}\cr
  \hr{13}\cr
                      &1&&1 &&2'   && 2 && \ov3 && \ov2'  &\cr     
  \hr{19}\cr
                      \omit& &&3&&  0' && 0 && 0 && 0 && 0 && 0 && \ov3'  &\cr          
  \nr{2}&\hr{17}\cr
                      \omit& &\omit& &&\ov2   &&\ov2 &&\ov1' &\cr      
  \nr{4}&\hr{7}\cr  
 }}}
\end{array}
& \ 
\vcenter{\hbox{
\begin{tikzpicture}[x={(0in,-0.2in)},y={(0.2in,0in)}] 
\foreach \i in {1,...,7}
\foreach \j in {1,...,8}\normalsize
\draw(\i,\j)node{$\bullet$};
\foreach \j in {-2,...,0}
\draw(4,\j)node{$\bullet$};
\foreach \j in {1,...,8}
\draw(8,\j)node{$\bullet$};
\draw(1,-3)node{$1$};
\draw(2,-3)node{$2$};
\draw(3,-3)node{$3$};
\draw(4,-3)node{$0$};
\draw(5,-3)node{$\ov3$};
\draw(6,-3)node{$\ov2$};
\draw(7,-3)node{$\ov1$};
\draw(8,-3)node{$\ov0$};
\draw(9,1)node{$1$};
\draw(9,2)node{$2$};
\draw(9,3)node{$3$};
\draw(9,4)node{$4$};
\draw(9,5)node{$5$};
\draw(9,6)node{$6$};
\draw(9,7)node{$7$};
\draw(9,8)node{$8$};

\draw[draw=black,thick,out=90,in=180] (4,-2)to(1,1);
\draw[draw=black,thick] (1,1)to(1,2);
\draw[draw=black,thick]  (1,2)to(2,3);
\draw[draw=black,thick] (2,3)to(2,4);
\draw[draw=black,thick] (2,4)to(4,4);
\draw[draw=black,thick] (4,4)to(4,7);
\draw[draw=black,thick]  (4,7)to(5,8);
\draw[draw=black,thick] (5,8)to(8,8);

\draw[draw=black,thick,out=90,in=180] (4,0)to(3,1);
\draw[draw=black,thick] (3,1)to(4,2);
\draw[draw=black,thick] (4,2)to(4,3);
\draw[draw=black,thick]  (4,3)to(4,4);
\draw[draw=black,thick] (4,4)to(5,4);
\draw[draw=black,thick] (5,4)to(5,5);
\draw[draw=black,thick]  (5,5)to(6,6);  
\draw[draw=black,thick] (6,6)to(8,6);

\draw[draw=black,thick,out=270,in=180] (4,-1)to(6,1);
\draw[draw=black,thick] (6,1)to(6,2);
\draw[draw=black,thick]  (6,2)to(7,3);
\draw[draw=black,thick](7,3)to(8,3); 
\end{tikzpicture}
}}
\end{array}
\end{equation*}
\end{center}
\caption{Example of intersecting lattice paths corresponding to a pair of non-standard shifted primed tableaux.}
\label{fig-PtoLP3}
\end{figure*}
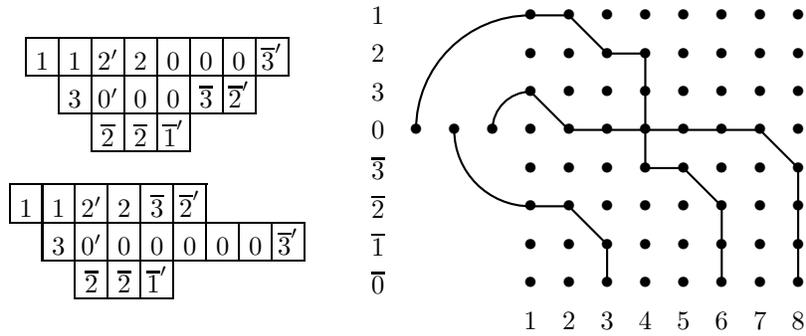

It can be seen that in this particular case there are two distinct primed shifted tableaux that give rise to the same set of lattice path edges depending upon
how the edges are assigned to each path beyond the point of intersection. However both primed shifted tableaux are non-standard. Moreover the
two choices of lattice paths correspond to a transposition of the end points. It is a characteristic of all such intersecting lattice paths that they 
can be associated in pairs through the identification of their rightmost point of intersection, and that the pairs differ in a simple transposition
of their end points.

\section{Weights of primed shifted tableaux and sets of non-intersecting lattice paths}
\label{sec:PSTandLPwgts}

The next step towards our result is the adoption of a sufficiently general weighting of the compass points matrix. 
Brubaker and Schultz \cite{BrubakerSchultz} have defined a universal weighting for the compass points matrix for a large 
number of different types of HTSASMs (i.e.\ not just for the odd-sided ones we consider here). We reinterpret this universal weighting 
in a form more amenable to its subsequent use here in connection with primed shifted tableaux as follows.
\begin{Definition}\label{def-Awgt}
For $n\in\N$ let $\x=(x_1,x_2,\ldots,x_{n})$ and $\y=(y_1,y_2,\ldots,y_{n})$ be two sequences of non-zero parameters, 
and let  $z_0$ be a further non-zero parameter. Let $\lambda$ be a strict partition
of length $\ell(\lambda)=n$ and first part $\lambda_1=m \geq n$. Then for any $(2n+1)\times m$
$\lambda$-HTSASM $A\in{\cal B'}^\lambda_n$ let the corresponding compass point matrix be $C$.
Then we define
\begin{equation}\label{eqn-Awgt}
   \wgt(A) = \prod_{i=1}^n x_i^{n-i}\ (x_i\ov{y}_i)^{L_i} \ (z_0x_i)^{L_{i+1}-L_i}\  \prod_{i=1}^{2n+1}\prod_{j=1}^m \wgt(c_{ij})
\end{equation}
where $\wgt(c_{ij})$ is as tabulated below
\begin{equation}\label{tab-Cwgt}
\begin{array}{|l|l|l|l|}
\hline
\hbox{Entry}&i<n+1&i=n+1&i>n+1\cr
\hbox{at $(i,j)$}&j\geq1&j\geq1&j\geq1\cr
\hline
\WE&1&1&1\cr
\NS&x_i+y_i&z_0+\ov{z}_0&\ov{y}_{2n+2-i}+\ov{x}_{2n+2-i}\cr
\NE&1&1&1\cr
\SE&1&1&1\cr
\NW&y_i&\ov{z}_0&\ov{x}_{2n+2-i}\cr
\SW&x_i&z_0&\ov{y}_{2n+2-i}\cr
\hline
\end{array}
\end{equation}
\end{Definition}

This may appear somewhat less general than that used in the statement of Theorem~\ref{the-result1},
but that is not the case since the tabulation (\ref{tab-sxytz}) may be recovered from that of (\ref{tab-Cwgt})
by mapping $x_i$ to $s_ix_i$ and $y_i$ to $\ov{t}_iy_i$ for $i=1,2,\ldots,n$.
 
The great merit of adding primes to the unprimed shifted tableaux is that for
each $T\in{\cal T}^\lambda(2n+1)$ corresponding to a $\lambda$-HTSASM $A$ there exist precisely $2^{\neg(A)}$ distinct 
primed shifted tableaux $P\in{\cal P}^\lambda(2n+1)$ where $\neg(A)$ is the number of entries $-1$ in $A$,
or equivalently. the number of entries $\NS$ in the corresponding compass point matrix $C$. This means that 
the $2^{\neg(A)}$ terms that arise in the expression for $\wgt(A)$ based on the weights $\wgt(c_{ij})$ 
tabulated in (\ref{tab-Cwgt}) can be separated into a sum of monomials with just one for each distinct shifted primed tableau
$P$. To be precise ,the above weighting in equation (\ref{tab-Cwgt}) of compass point matrices $C$ leads 
directly to the following:

\begin{Lemma}\label{lem-AP}
With the notation of Definition~\ref{def-Awgt}
\begin{equation}\label{eqn-AP}
\sum_{A\in{\cal B'}^\lambda_n} \ \wgt(A)  = \prod_{i=1}^n  x_i^{n-i}\  \sum_{P\in{\cal P}^\lambda(2n+1)} \ \wgt(P)
\end{equation}
where
\begin{equation}
   \wgt(P) =  \prod_{e\in P}\ \wgt(e)\,,
\end{equation}
where for each entry $e$ in $P$ the map from $e$ to its weight $\wgt(e)$ 
is given by: 
\begin{equation}\label{tab-Pwgt}
\begin{array}{|l|l|}
\hline
\hbox{Diagonal weights}&\hbox{Off-diagonal weights} \cr
\hline
 k\mapsto z_0x_k \prod_{j>k} (x_j\ov{y}_j) &k\mapsto x_k\cr
 \ov{k}\mapsto 1    &k'\mapsto y_k\cr
                    &\ov{k}\mapsto \ov{y}_k\cr
                    &\ov{k}'\mapsto \ov{x}_k\cr
                    &0\mapsto z_0\cr
                    &0'\mapsto \ov{z}_0\cr
\hline
\end{array}
\end{equation}
\end{Lemma}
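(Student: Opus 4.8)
The plan is to prove Lemma~\ref{lem-AP} by a bookkeeping argument that matches the two weightings monomial by monomial through the combinatorial correspondences already assembled above, reducing the proof to a line-by-line comparison of the tables (\ref{tab-Cwgt}) and (\ref{tab-Pwgt}). The ingredients are: the bijection $A\leftrightarrow T$ between $\lambda$-HTSASMs $A\in{\cal B'}^\lambda_n$ and unprimed shifted tableaux $T\in{\cal T}^\lambda(2n+1)$ realised via the cumulative row-sum matrix $B$ (Figure~\ref{fig-ABT-lambda}); the refinement that each such $T$, with compass-point matrix $C$, is the common image of exactly $2^{\neg(A)}$ primed shifted tableaux $P\in{\cal P}^\lambda(2n+1)$, under the dictionary that sends a $\SW$, a $\NW$ or a $\NS$ in row $i$ of $C$ to a forced-unprimed entry $e_i$, a forced-primed entry $e'_i$, or a free entry $e_i$ or $e'_i$ of $P$ off the main diagonal, while a $\WE$, $\SW$ or $\NW$ in the first column of $C$ supplies an automatically unprimed entry $e_i$ on the main diagonal of $P$; and the two weight tables themselves. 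After peeling off the factor $\prod_{i=1}^n x_i^{n-i}$, which appears on both sides of (\ref{eqn-AP}), I would split the rest of $\wgt(A)$ from (\ref{eqn-Awgt}) into a diagonal prefactor $D(A)=\prod_{i=1}^n (x_i\ov{y}_i)^{L_i}(z_0x_i)^{L_{i+1}-L_i}$ and an interior product $\Pi(A)=\prod_{i=1}^{2n+1}\prod_{j=1}^m \wgt(c_{ij})$.

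For the interior product, comparing (\ref{tab-Cwgt}) with the off-diagonal column of (\ref{tab-Pwgt}) shows that $\WE$, $\NE$ and $\SE$ all have weight $1$ and contribute nothing, matching the fact that the dictionary assigns them no cell of $P$; that a $\SW$ in row $i$ carries exactly the weight assigned in (\ref{tab-Pwgt}) to the unprimed letter $e_i$ and a $\NW$ in row $i$ the weight of the primed letter $e'_i$; and that a $\NS$ in row $i$ carries the binomial $x_i+y_i$, $z_0+\ov{z}_0$ or $\ov{y}_{2n+2-i}+\ov{x}_{2n+2-i}$ according to the three ranges of $i$, which in each case is the sum of the unprimed and the primed weight of the letter occupying that cell. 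Distributing each of the $\neg(A)$ binomial $\NS$-factors of $\Pi(A)$ over its two summands therefore produces exactly $2^{\neg(A)}$ monomials, one for each primed tableau $P$ lying over $A$, and that monomial equals $\prod_e\wgt(e)$ over the off-main-diagonal entries $e$ of $P$.

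The crux is the diagonal prefactor. Since the first-column cells of $C$ belonging to $\{\WE,\SW,\NW\}$ are precisely those that populate the main diagonal of $P$, the increment $L_{i+1}-L_i$ equals $1$ exactly when $c_{i1}\in\{\WE,\SW,\NW\}$, that is, exactly when the letter $e_i$ lies on the main diagonal, and $L_j$ counts the main-diagonal letters $e_i$ with $i<j$. For $1\le i\le n$ one has $e_i=i$, and for each $k\in\{1,\dots,n\}$ exactly one of $k=e_k$ and $\ov{k}=e_{2n+2-k}$ lies on the main diagonal; write $k\in\Delta$ in the first case. Then $\prod_{i=1}^n (z_0x_i)^{L_{i+1}-L_i}=\prod_{k\in\Delta}z_0x_k$, and reversing the order of summation in the double count of the pairs $i<j\le n$ with $e_i$ on the main diagonal gives $\prod_{j=1}^n (x_j\ov{y}_j)^{L_j}=\prod_{k\in\Delta}\prod_{j>k}(x_j\ov{y}_j)$. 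Multiplying, $D(A)=\prod_{k\in\Delta}\bigl(z_0x_k\prod_{j>k}(x_j\ov{y}_j)\bigr)$, and since the complementary main-diagonal letters $\ov{k}$ with $k\notin\Delta$ have diagonal weight $1$ in (\ref{tab-Pwgt}), this is exactly $\prod_e\wgt(e)$ over the main-diagonal entries of any $P$ over $A$ — the same value for all of them, because no primes occur on the main diagonal.

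Combining the pieces, for each fixed $A$ we get $D(A)\,\Pi(A)=\sum_{P\to A}\prod_{e\in P}\wgt(e)=\sum_{P\to A}\wgt(P)$; summing over $A\in{\cal B'}^\lambda_n$, with every $P\in{\cal P}^\lambda(2n+1)$ lying over a unique $A$, then yields (\ref{eqn-AP}). I expect the diagonal-prefactor step to be the main obstacle: one must read the sign pattern $\Delta$ of the main diagonal of $P$ correctly off the first-column data $\{L_i\}$ and then perform the reindexing that reassembles the telescoping products $\prod_i(x_i\ov{y}_i)^{L_i}$ and $\prod_i(z_0x_i)^{L_{i+1}-L_i}$ into the product of diagonal weights in (\ref{tab-Pwgt}); one must also keep track of the off-by-one between column $j$ of $C$ and diagonal $j+1$ of $P$, so that column $1$ is unambiguously the source of the main diagonal and the invisibility of $\WE$, $\NE$ and $\SE$ in the later columns is applied consistently. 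Everything else is routine matching of the two tables.
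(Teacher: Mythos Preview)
Your proof is correct and follows essentially the same approach as the paper's own argument: split $\wgt(A)$ into the interior compass-point product and the diagonal prefactor, match the interior factors $\SW$, $\NW$, $\NS$ to the off-diagonal entries of $P$ via the dictionary already established, and then identify the prefactor $\prod_{i=1}^n (x_i\ov{y}_i)^{L_i}(z_0x_i)^{L_{i+1}-L_i}$ with the product of main-diagonal weights. In fact you are more explicit than the paper on the last point: where the paper simply remarks that the diagonal factors ``could be distributed more or less arbitrarily between the weights of $k$ and $\ov{k}$'' and that the choice $\ov{k}\mapsto 1$ forces the stated weight of $k$, you actually carry out the reindexing $\prod_{j=1}^n (x_j\ov{y}_j)^{L_j}=\prod_{k\in\Delta}\prod_{j>k}(x_j\ov{y}_j)$ and verify that $D(A)$ equals the product of diagonal weights in~(\ref{tab-Pwgt}).
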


\begin{proof}
It suffices to note that under the map from $A$ to $P$, by way of the accumulated row sum matrix $B$, the 
compass point matrix $C$ and the shifted tableau $T$, 
entries $\SW$ and $\NW$ in row $i$ of $C$ give rise to entries $k$ and $k'$,
respectively, in $P$ if $i=k<n+1$, and to $0$ and $0'$ if $i=n+1$, and to $\ov{k}$ and $\ov{k}'$ if $i=2n+2-k>n+1$. 
As can be seen from table~(\ref{tab-Cwgt}) each entry $\NS$ in any row $i$ of $C$ has weight equal to the 
sum of the weights of entries $\SW$ and $\NW$ in the same row. These weights are then ascribed to a
corresponding entry in $T$ that, as we have seen, may be unprimed or primed under the map from $A$ to $P$. Such an entry
is the first of the continuous strip of identical entries in $T$ arising from the sequence of $1$s in $B$ 
that extends as far as the position of the next entry $\WE$ in $C$. However, this does not apply
to strips starting on the main diagonal. These arise if the leftmost entry in row $i$ of $C$ is either 
$\WE$, $\SW$ or $\NW$, so that there is no room for any entry $\NS$ to their left. The condition for this to 
happen is $L_{i+1}-L_i=1$. The contribution to the weights of the entries on the main diagonal of $P$ is then 
determined by the product $\prod_{i=1}^n (x_i\ov{y}_i)^{L_i} (z_0x_i)^{L_{i+1}-L_i}$ appearing in (\ref{eqn-Awgt}).
These factors could be distributed more or less arbitrarily between the weights of $k$ and $\ov{k}$ but we have chosen 
to set the weight of $\ov{k}$ to be $1$, thereby fixing the weight of $k$ as shown in (\ref{tab-Pwgt}).
\end{proof}

\section{Lattice path result}
\label{sec:LPresult}

Before proving our main result in Section \ref{sec:Tok} we need an intermediate result that we prove using lattice path techniques.

\begin{Theorem}\label{the-Pdet}
For all $n\in\N$ let $\lambda$  
be a strict partition of length $\ell(\lambda)=n$.  
Then with our weighting of primed shifted tableaux given in (\ref{tab-Pwgt}) we have
\begin{equation}\label{eqn-Pdet}
   \sum_{P\in{\cal P}^\lambda(2n+1)} \wgt(P) = [q^\lambda] \det_{1\leq k,\ell\leq n} \big(\,h_{k,\ell}(q_\ell)\,\big)  
\end{equation}
where
\begin{equation}\label{eqn-hfg}
            h_{k,\ell}(q_\ell)=f_{k,\ell}(q_\ell) + (-1)^{n-k}g_{k,\ell}(q_\ell)
\end{equation}
with
\begin{equation}
 f_{k,\ell}(q_\ell)= \frac{\u_kq_\ell}{1-x_kq_\ell}\ \prod_{i=k+1}^n \frac{1+y_iq_\ell}{1-x_iq_\ell}\ \ \frac{1+\ov{z}_0q_\ell}{1-z_0q_\ell}\  \ 
 \prod_{i=1}^n \frac{1+\ov{x}_iq_\ell}{1-\ov{y}_iq_\ell}.
 \label{eqn-f}
\end{equation}
and
\begin{equation}
g_{k,\ell}(q_\ell)=\frac{\ov{v}_kq_\ell}{1-\ov{y}_kq_\ell}\ \prod_{i=1}^{k-1} \frac{1+\ov{x}_iq_\ell}{1-\ov{y}_iq_\ell}\,. 
\label{eqn-g}
\end{equation}
where 
\begin{equation}\label{eqn-uv}
  u_k= z_0x_k\prod_{i=k+1}^n x_i\ov{y}_i \quad\hbox{and}\quad  \ov{v}_k=1\,.
\end{equation}
\end{Theorem}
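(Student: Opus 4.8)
The plan is to read off $\sum_{P}\wgt(P)$ as a weighted count of the non-intersecting lattice path configurations that are bijective with the tableaux $P\in{\cal P}^\lambda(2n+1)$ (Section~\ref{sec:PSTandLPs}), and then to evaluate that count by the Lindstr\"om--Gessel--Viennot (LGV) lemma. First I would promote the entry weights of Table~(\ref{tab-Pwgt}) to edge weights, attaching to the path that terminates in column $\lambda_\ell$ a formal marker $q_\ell$ recording horizontal displacement: the curved edge out of the $k$th source gets the diagonal weight $u_kq_\ell$ if it rises to level $k$ and $\ov v_kq_\ell=q_\ell$ if it descends to level $\ov k$; a horizontal edge at level $j$ gets the off-diagonal weight of the unprimed letter of level $j$ times $q_\ell$; a downward-diagonal edge entering level $j$ gets the weight of the primed letter $j'$ of that level times $q_\ell$; and each vertical edge gets weight $1$. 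With these conventions the product of the edge weights of the path family associated to $P$ is $q^\lambda\wgt(P)$, and --- this being the content of the bijection recalled after Figures~\ref{fig-PtoLP1}--\ref{fig-PtoLP3} --- the family is non-intersecting precisely when $P$ satisfies all the standardness conditions of Definition~\ref{def-P}.

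\textbf{Single-path generating functions.} Next I would compute, for each source $k$ and each final column, the generating function of one path. Each such path begins with its curved edge, which either rises to level $k$ or descends to level $\ov k$. In the first case the path then descends monotonically through the levels $k,k+1,\dots,n,0,\ov n,\dots,\ov1,\ov0$, making at each level a possibly empty run of horizontal steps preceded by one entering edge that is either vertical (weight $1$) or a downward diagonal (the appropriate primed weight times $q_\ell$). Summing the geometric series at each level and telescoping over the levels gives exactly $f_{k,\ell}(q_\ell)$: the factor $u_kq_\ell/(1-x_kq_\ell)$ is the curved edge together with the run at level $k$, while $\prod_{i=k+1}^n(1+y_iq_\ell)/(1-x_iq_\ell)$, $(1+\ov z_0q_\ell)/(1-z_0q_\ell)$ and $\prod_{i=1}^n(1+\ov x_iq_\ell)/(1-\ov y_iq_\ell)$ record the passages through the remaining unbarred levels, the central level $0$, and the barred levels. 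In the second case the path descends only through $\ov k,\overline{k-1},\dots,\ov1,\ov0$, and the identical bookkeeping produces $g_{k,\ell}(q_\ell)$, with $\ov v_k=1$.

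\textbf{LGV and the sign.} Finally I would apply LGV, so that the weighted sum over non-intersecting families becomes a single $n\times n$ determinant whose $(k,\ell)$ entry is the one-path generating function from source $k$ to the sink in column $\lambda_\ell$. The genuinely delicate point --- and the place I expect the real work to be --- is the relative sign of the two branches of each curved edge. Measured against the reference pattern in which every path rises, switching the $k$th path to its descending branch relocates its effective endpoint from the top of the grid into the barred region, sliding it past the descending branches of sources $k+1,\dots,n$; this costs $n-k$ interchanges of boundary points, so the descending branch must be weighted by $(-1)^{n-k}$. Equivalently, one may run the Gessel--Viennot tail-swapping involution on sign-decorated single-path families and verify, using the pairing of intersecting configurations at their rightmost crossing noted after Figure~\ref{fig-PtoLP3}, that it is sign-reversing with exactly this weighting. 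Either way the $(k,\ell)$ entry is $f_{k,\ell}(q_\ell)+(-1)^{n-k}g_{k,\ell}(q_\ell)=h_{k,\ell}(q_\ell)$; extracting the coefficient of $q^\lambda=\prod_\ell q_\ell^{\lambda_\ell}$ and combining with the first step yields (\ref{eqn-Pdet}).
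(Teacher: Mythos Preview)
Your proposal is correct and follows essentially the same route as the paper: translate $\sum_P\wgt(P)$ into a weighted count of non-intersecting lattice path families via the bijection of Section~\ref{sec:PSTandLPs}, identify $f_{k,\ell}$ and $g_{k,\ell}$ as the single-path generating functions for the two curved-edge branches out of the $k$th source, and then obtain the determinant by a Lindstr\"om--Gessel--Viennot/Okada cancellation. The paper organises the sign bookkeeping slightly differently, summing first over the $2^n$ diagonal \emph{profiles} (choices of $k$ versus $\ov k$ for each $k$) and observing that reordering the rows of the profile-wise determinant to standard order costs exactly $\prod_{k:\,\ov k\in\p}(-1)^{n-k}$, after which the $2^n$ determinants combine into the single determinant $\det(h_{k,\ell})$; but this is the same calculation you sketch via boundary-point sliding. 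One small wording slip: when you switch source $k$ to its descending branch from the all-ascending reference, it slides past the \emph{ascending} (not descending) branches of sources $k+1,\dots,n$ --- the count $n-k$ is nonetheless correct, and in general the total sign over any profile is $\prod_{k\in S}(-1)^{n-k}$ where $S$ is the set of switched sources, as a short inversion count confirms.
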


\begin{proof}
To prove this result we construct a lattice path argument along the lines of that employed by Okada\footnote{Note that we could have also followed a different path, so to speak, making use of Theorem 4.2, equation (32), of Ishikawa and Wakayama \cite{IWIII} instead.  Their theorem is a more general case of Stembridge's result \cite{Stembridge}; however, their theorem holds in the case when the starting and ending points are not $D$-compatible (as defined in \cite{Stembridge}), which is the situation we would construct here.  The connection to \cite{IWIII} and \cite{Stembridge} implies there would be Pfaffian results as well, and these may be the subject of some of our future work.} \cite{Okadapartial}.
The lattice paths have starting points $P_i=(-n+i,0)$ and ending points $Q_j=(\lambda_j,\ov0)$ for $i,j=1,2,\ldots,n$, 
and for each primed shifted tableau $P$ extend from $P_i$ to $Q_{\pi(i)}$ for some permutation $\pi$ of $(1,2,\ldots,n)$.
With each vertical edge weighted $1$, the weights on curved, horizontal, and diagonal edges are as follows: 

1) For a curved edge from $(-n+k, 0)$ to $(0,k)$ the weight is $u_k$, 
and from $(-n+k,0)$ to $(0,\ov{k})$, the weight is $\ov{v}_k$. 

2) For a horizontal edge for unbarred nonzero $i$ from $(j,i)$ to $(j+1,i)$, the weight is $x_i$.  
For a horizontal edge for barred nonzero $i$ from $(j,\bar{\imath})$ to $(j+1,\bar{\imath})$, the weight is $\ov{y}_i$.  
For a horizontal edge for $0$ from $(j,0)$ to $(j+1,0)$, the weight is $z_0$.  

3) For a diagonal edge for unbarred nonzero $i$ from $(j,i-1)$ to $(j+1, i)$ the weight is $y_i$.  
For a diagonal edge for barred nonzero $i$ from $(j,\ov{i+1})$ to $(j, \bar{\imath})$ the weight is $\ov{x}_i$. 
For a diagonal edge for $0$ from  $(j,n)$ to $(j+1,0)$ the weight is $\ov{z}_0$.

It is not hard to see that with this weighting, the set of non-intersecting lattice paths from $P_i$ to $Q_i$ gives 
the generating function for the required sum of weights of primed shifted tableaux on the left hand side of (\ref{eqn-Pdet}).
In particular, the row starting with $k$  or $\ov{k}$ corresponds to the lattice path starting at $(-n+k,0)$. 

It remains to show that the right hand side of (\ref{eqn-Pdet}) counts all possible lattice paths (intersecting and non-intersecting) of this form.  To do this we show that $f$ and $g$ as defined in equations (\ref{eqn-f}) and (\ref{eqn-g}) are the generating functions for  the weights of individual paths starting at $k$ and $\ov{k}$ respectively.  Then from Okada's argument in \cite{Okadapartial} about cancellations between pairs of intersecting paths we know that the determinant of these functions counts the weights of the set of non-intersecting paths.  The final step is to combine determinants.

First we show that the generating function for the weights of all possible rows starting with $k$ in the tableaux can be expressed as
\begin{equation}
 f_{k,\ell}(q_\ell):=\frac{u_kq_\ell}{1-x_kq_\ell}\ 
\prod_{i=k+1}^n \frac{1+y_iq_\ell}{1-x_iq_\ell}\ \ \frac{1+\ov{z}_0q_\ell}{1-z_0q_\ell}\ \
 \prod_{i=1}^n \frac{1+\ov{x}_iq_\ell}{1-\ov{y}_iq_\ell}.
 \label{eqnf}
\end{equation}

As justification for this assertion, we point out that the first (curved) step is at height $k$ and has 
weight $u_k$.   After that we cannot have any diagonal steps at height $k$ but we can have more horizontal steps.  
Their weights are generated by $(1-x_kq_\ell)^{-1}$.  Then we can have the remaining steps from any height bigger than $k$.  
We can have at most one of these as a diagonal step---and these weights are generated by $(1+y_iq_\ell)$ for unbarred entries, 
by $(1+\ov{z}_0q_\ell)$ for zero entries, and by $(1+\ov{x}_iq_\ell)$ for barred entries---%
or as many horizontal steps---and these are counted by $(1-x_iq_\ell)^{-1}$ for unbarred entries, by $(1-z_0q_\ell)$ for zero entries,  
and by $(1-\ov{y}_iq_\ell)^{-1}$ for barred entries.  

A similar argument implies that 
the generating function for the weights of all possible rows 
starting with $\ov{k}$ in the tableaux can be expressed as
\begin{equation}
g_{k,\ell}(q_\ell):=\frac{\ov{v}_kq_\ell}{1-\ov{y}_kq_\ell}\ \prod_{i=1}^{k-1} \frac{1+\ov{x}_iq_\ell}{1-\ov{y}_iq_\ell}\,. 
\label{eqng}
\end{equation}

In the expansion of each of these generating functions $f_{k,\ell}(q_\ell)$ and $g_{k,\ell}(q_\ell)$ the parameter
$q_\ell$ carries the total number of steps to the right in each lattice path, and thereby its exponent gives the
the number of the column in which each lattice path terminates. 
In dealing with lattice paths associated with primed shifted tableaux of shape $\lambda$ it is convenient to use the notation 
whereby $[q_{\ell}^{\lambda_{\ell}}]$ picks out the coefficient of $q_{\ell}^{\lambda_{\ell}}$ in the expansion 
of these generating functions and $[q^\lambda]$ the coefficient of $q_{1}^{\lambda_{1}}q_2\,^{\lambda_2}\cdots
q_n^{\lambda_n}$.

To put equations (\ref{eqnf}) and (\ref{eqng}) together to handle the entirety of paths, we begin by using the notation of 
Okada to refer to the sequence of entries $\p=(d_1,d_2,\ldots,d_n)$ 
on the main diagonal of $P$ as its {\em profile}, and note that there are $2^n$ distinct profiles $\p$ of those 
$P$ of shape $\lambda$, where each such profile $\p$ contains either $k$ or $\ov{k}$ but not both. 
It follows from Okada's argument~\cite{Okadapartial} that
the generating function for all non-intersecting paths corresponding to a
particular profile $\p$ is given by $\ds \det(\,f_{k\ell}(q_\ell)\,)$ for $k\in\p$
and $\ds \det(\,g_{k\ell}(q_\ell)\,)$ for $\ov{k}\in\p$.
							
Let $\pi$ be the permutation mapping $\p$ to $\p^{\pi}$ where $\p^{\pi}_k=k$ or $\ov{k}$.
For each allowed profile $\p$ the permutation $\pi$ is a product of cycles having parity $(-1)^{n-k}$
moving $\ov{k}$ to position $k$. Hence summing over all distinct profiles $\p$ and permuting rows
and combining determinants one arrives at  
$
           \sum_{\p}  \det(\,\tilde{h}_{k\ell}(q_\ell)\,)  =  \det(\, f_{k\ell}(q_\ell)+(\!-1)^{n-k}g_{k\ell}(q_\ell) \,)
$. The proof is completed by restricting attention to those primed shifted tableaux of shape $\lambda$ in which case
one picks out the coefficient of $q_{\ell}^{\lambda_{\ell}}$ from each term in the $\ell$th column. Since $q_\ell$
only appears in this column, the operator $[q^{\lambda}]$ may be taken out of the determinant yielding the required result. \end{proof}

Now it remains to expand the determinant on the right hand side of (\ref{eqn-Pdet}).

\section{Determinantal expansion}
\label{sec:det}

The determinant appearing in Theorem~\ref{the-Pdet} does not, as it stands, appear to be one that
has already been evaluated in the extensive collection of various types of determinant in the papers 
of Krattenthaler~\cite{Kratdet1,Kratdet2} or indeed elsewhere. 
However, explicit evaluation in the cases $n=1$, $2$ and $3$ suggests the validity of the following
Lemma which we will state and prove:
\begin{Lemma}
For all $n\in\N$ let $\z=(x_1,\ldots,x_n,z_0,y_n,\ldots,y_2,y_1)$ and let $\q=(q_1,q_2,\ldots,q_n)$.
Then for $h_{k,\ell}(q_\ell)$ as given in Theorem~\ref{the-Pdet} we have
\begin{equation}
\det_{1\leq k,\ell\leq n}\big(\,h_{k,\ell}(q_\ell)\,\big) = Z(\z)\ K(\z,\q)\ Q(\q)
\label{eqn-deth}
\end{equation}
where
\begin{equation}
\begin{array}{rcl}
Z(\z) &=& \ds \prod_{i=1}^n x_i^{-(n-i)}\ \prod_{i=1}^n (1+z_0x_i) \  \prod_{1\leq i<j\leq n} (1+ x_ix_j)(1+x_i\ov{y}_j)\,; \cr\
K(\z,\q) &=& \ds 1\big/\big(\ \prod_{i=1}^n (1-q_iz_0)\ \prod_{i,j=1}^n (1-q_jx_i)(1-q_j\ov{y}_i) \ \big)\,;\cr
Q(\q)   &=& \ds \prod_{i=1}^n q_i\ \prod_{1\leq i<j\leq n} (q_i-q_j)  \ \prod_{1\leq i\leq j\leq n} (1+q_iq_j)\,. \cr
\end{array}
\label{eqn-ZKQ}
\end{equation}
\label{lem-deth}
\end{Lemma}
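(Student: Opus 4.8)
The plan is to clear the obvious denominators, reducing the claim to a polynomial identity, and then prove that identity by the classical ``identify all the factors, then pin down the constant'' method, the whole argument being driven by one functional equation. Write $h_{k,\ell}(q_\ell)=D_\ell(q_\ell)\,\tilde h_{k,\ell}(q_\ell)$, where $D_\ell(q)=\big((1-z_0q)\prod_{i=1}^n(1-x_iq)(1-\ov y_iq)\big)^{-1}$ and, after cancellation, $\tilde h_{k,\ell}(q)=u_k\,q\,A_k(q)+(-1)^{n-k}\,q\,B_k(q)$ with $A_k(q)=\prod_{i=1}^{k-1}(1-x_iq)\prod_{i=k+1}^n(1+y_iq)\,(1+\ov z_0q)\prod_{i=1}^n(1+\ov x_iq)$ and $B_k(q)=\prod_{i=1}^{k-1}(1+\ov x_iq)\,(1-z_0q)\prod_{i=1}^n(1-x_iq)\prod_{i=k+1}^n(1-\ov y_iq)$, both polynomials of degree $2n$. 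Pulling the scalar $D_\ell(q_\ell)$ out of column $\ell$ gives $\det(h_{k,\ell}(q_\ell))=K(\z,\q)\,\det(\tilde h_{k,\ell}(q_\ell))$, so it remains to prove $\det_{1\le k,\ell\le n}\!\big(\tilde h_{k,\ell}(q_\ell)\big)=Z(\z)\,Q(\q)$, where each $\tilde h_{k,\ell}$ is a polynomial in $q_\ell$ of degree at most $2n+1$ with no constant term.

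The crux is the functional equation
\[
 q^{2n+2}\,\tilde h_{k,\ell}(-1/q)=(-1)^n\,\tilde h_{k,\ell}(q),
\]
valid for every $k$ and $\ell$. This follows from the reciprocity $q^{2n}A_k(-1/q)=c_kB_k(q)$ — the two sides have the same zero set, as one checks by inspection — together with the identity $u_kc_k=(-1)^{k-1}$, which amounts to the elementary fact that the leading coefficient of $B_k$ equals $(-1)^{k-1}u_k$, proved by a short manipulation of products using $z_0\ov z_0=\prod_ix_i\ov x_i=1$ and $u_k=z_0x_k\prod_{i>k}x_i\ov y_i$. Granting the functional equation, all the irreducible factors of $Q(\q)=\prod_iq_i(1+q_i^2)\cdot\prod_{i<j}(q_i-q_j)(1+q_iq_j)$ divide the determinant: (i) $q_\ell$ divides column $\ell$ since $\tilde h_{k,\ell}(0)=0$; (ii) setting $q=\sqrt{-1}$ (so that $-1/\sqrt{-1}=\sqrt{-1}$) forces $\tilde h_{k,\ell}(\pm\sqrt{-1})=0$ for all $k$, so $(1+q_\ell^2)$ divides column $\ell$; (iii) if $q_i=q_j$ then columns $i$ and $j$ coincide, so $\prod_{i<j}(q_i-q_j)$ divides the determinant; (iv) if $q_iq_j=-1$ then the functional equation gives column $j=(-1)^nq_i^{-(2n+2)}\cdot$ column $i$, so $(1+q_iq_j)$ divides the determinant. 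Since these factors are pairwise coprime, $Q(\q)$ divides $\det(\tilde h_{k,\ell}(q_\ell))$; as the $q_\ell$-degree of the determinant is at most $2n+1$, the same as that of $Q(\q)$, the quotient is a polynomial free of $\q$, hence equals some $Z'(\z)$.

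It then remains to check $Z'(\z)=Z(\z)$. Here I would divide out $\prod_\ell q_\ell(1+q_\ell^2)$ to reduce to $\det(\hat r_{k,\ell}(q_\ell))=Z'(\z)\prod_{i<j}(q_i-q_j)(1+q_iq_j)$, where $\hat r_{k,\ell}(q)=r_k(q)/(q^2+1)$ with $r_k=u_kA_k+(-1)^{n-k}B_k$ is a polynomial of degree $2n-2$ independent of $\ell$, and then extract the coefficient of $q_1^{0}q_2^{1}\cdots q_n^{n-1}$: on the right this isolates only the leading Vandermonde monomial of $\prod_{i<j}(q_i-q_j)$ (coefficient $(-1)^{\binom n2}$), while on the left it is $\det_{1\le k,\ell\le n}\big([q^{\ell-1}]\hat r_k(q)\big)$, the determinant of low-order Taylor coefficients, which one evaluates using $A_k(0)=B_k(0)=1$ and the explicit products defining $A_k,B_k$; this produces exactly $Z(\z)=\prod_ix_i^{-(n-i)}\prod_i(1+z_0x_i)\prod_{i<j}(1+x_ix_j)(1+x_i\ov y_j)$. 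The main obstacle is establishing the functional equation uniformly in $k$ — the zero-set identification $q^{2n}A_k(-1/q)=c_kB_k(q)$ and the bookkeeping identity $u_kc_k=(-1)^{k-1}$ must both hold for all $k$ at once, and everything downstream is a formal consequence — while the final coefficient extraction fixing $Z'(\z)=Z(\z)$ is routine but somewhat lengthy symmetric-function bookkeeping (and could alternatively be carried out by an induction on $n$, as in the Appendix).
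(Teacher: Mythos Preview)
Your reciprocity relation $q^{2n+2}\tilde h_{k,\ell}(-1/q)=(-1)^n\tilde h_{k,\ell}(q)$ is correct, and it is precisely the paper's key observation in another guise: after pulling out $K(\z;q_\ell)$, $q_\ell^{n+1}$ and $u_k$, the paper rewrites each entry as $\tilde f_{k,\ell}(q_\ell)-\tilde f_{k,\ell}(-\ov q_\ell)$ for a single polynomial $\tilde f_{k,\ell}$, which encodes exactly your $q\leftrightarrow -\ov q$ symmetry. Your divisibility arguments for the factors of $Q(\q)$ and the degree count showing the quotient is $\q$-free are all sound; indeed the paper makes the same vanishing observations in passing.

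The genuine gap is the last step. You assert that the coefficient extraction ``produces exactly $Z(\z)$'', but after your own reductions this amounts to evaluating the $n\times n$ determinant $\det_{k,\ell}\big([q^{\ell-1}]r_k(q)\big)$ with $r_k=u_kA_k+(-1)^{n-k}B_k$, whose entries are signed elementary symmetric functions in the full parameter set $\{x_i,\ov x_i,y_i,\ov y_i,z_0,\ov z_0\}$. This is not routine bookkeeping: it is a determinant evaluation of essentially the same difficulty as the original problem, and nothing in your argument so far constrains $Z'(\z)$ at all. The paper handles exactly this residual evaluation by invoking a known identity of Rosengren--Schlosser (their Corollary~5.8, recorded as Lemma~\ref{lem-detm}), which computes $\det\big(\tilde g_{k,\ell}(q_\ell)-\tilde g_{k,\ell}(-\ov q_\ell)\big)$ in closed form and thereby supplies both $Q(\q)$ and $Z(\z)$ simultaneously; the Appendix gives a self-contained proof of that lemma via several rounds of row and column operations together with a subsidiary elementary-symmetric-function identity. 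Your proposal would need a comparable argument at this point --- the functional equation by itself, having already been spent on $Q(\q)$, gives no further purchase on $Z'(\z)$.
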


\begin{proof}
First it is convenient to extract a factor of
\begin{equation*}
  K(\z;q_\ell):=1\big/ \big( (1-q_\ell z_0)\ \prod_{i=1}^n (1-q_\ell x_i)(1-q_\ell\ov{y}_i) \big)
\end{equation*}
from each element in the $\ell$th column of the determinant, together with a factor of $q_\ell^{n+1}$,
and to extract a factor of $u_k$ from each element in the $k$th row. 
This gives
\begin{equation}\label{eqn-hKtf} 
 \det_{1\leq k,\ell\leq n}  \big(\,h_{k,\ell}(q_\ell)\,\big) 
 = K(\z,\q)\ \prod_{i=1}^n q_i^{n+1}u_i\ \det_{1\leq k,\ell\leq n}  \left(\,\tilde{f}_{k,\ell}(q_\ell)-\tilde{f}_{k,\ell}(-\ov{q}_\ell)\,\right) 
\end{equation}
with
\begin{equation}\label{eqn-tf}
\tilde{f}_{k,\ell}(q_\ell) = q_{\ell}^{-n} (1+\ov{z}_0q_\ell)\ \prod_{i=1}^{n} (1+\ov{x}_iq_\ell)\ 
\prod_{i=1}^{k-1}(1-x_iq_\ell)\  \prod_{i=k+1}^n (1+y_iq_\ell) \,.
\end{equation}
The two terms in the determinant owe their origin to identities: 
\begin{equation*}
\begin{array}{rcl}
f_{k,\ell}(q_\ell) &=&\ds  K(\z;q_\ell)\ u_kq_\ell^{n+1}\  \tilde{f}_{k,\ell}(q_\ell)  \,;\cr\cr
g_{k,\ell}(q_\ell) &=&\ds (-1)^{n-k+1}K(\z;q_\ell)\bigg(\ov{v}_k\,z_0x_k\!\prod_{i=k+1}^n\!x_i\ov{y}_i\bigg) q_\ell^{n+1}\ \tilde{f}_{k,\ell}(-\ov{q}_\ell)\,,
\end{array}
\end{equation*}
and the fact that 
\begin{equation}\label{eqn-uvratio}
   \ov{v}_k\,z_0x_k\!\prod_{i=k+1}^n\!x_i\ov{y}_i =u_k\,,
\end{equation}
as follows from the specification of $u_k$ and $v_k$ given in (\ref{eqn-uv}).

One advantage of the form (\ref{eqn-hKtf}) is that it shows immediately that the determinant vanishes if
$q_\ell=-\ov{q}_\ell$, since in such a case all elements in the $\ell$th column vanish, or if either $q_i=q_j$ 
or $q_i=-\ov{q}_j$ for any $i\neq j$, since in either case the $i$th and $j$th columns are proportional 
to one another. This is sufficient to show that the factors $(q_i-q_j)$
for $1\leq i<j\leq n$ and $(1+q_iq_j)$ for $1\leq i\leq j\leq n$, are necessarily factors of the given 
determinant. 

However, a more significant advantage of this form is that the determinant on the right-hand side
is strikingly similar to the one evaluated by Rosengren and Schlosser~\cite{RosengrenSchlosser}
in their Corollary~5.8, and contained in the compendium assembled by Krattenthaler~\cite{Kratdet2} as
his Lemma~19. Indeed, applying this Corollary to a particular sequence of 
polynomials, $P_{j-1}(x_i)=\prod_{k=1}^{j-1}(1+b_kx_i)$ of degree at most $j-1$ for 
$j=1,2,\ldots,n$, along with a change of parameters whereby $x_i$, $a_i$, $b_i$ and $c_i$ 
are mapped to $Iq_i$, $Ia_i$, $Ib_i$ and $Ic_i$ with $I=\sqrt{-1}$, yields the following:
\begin{Lemma}\label{lem-detm}
For $n\in\N$ let $\q=(q_1,q_2,\ldots,q_n)$, $\a=(a_1,a_2,\ldots,a_n)$,
$\b=(b_1,b_2,\ldots,b_n)$ and $\c=(c_1,c_2,\ldots,c_n,c_{n+1})$
be four sequences of indeterminates, with $q_i\neq0$ and $\ov{q}_i=q_i^{-1}$ for $i=1,2,\ldots,n$,
and set $m_{k,\ell}(q_\ell)=\tilde{g}_{k,\ell}(q_\ell)-\tilde{g}_{k,\ell}(-\ov{q}_\ell)$
where
\begin{equation}\label{eqn-tg}
\tilde{g}_{k,\ell}(q_\ell) = q_\ell^{-n}\ \prod_{i=1}^{n+1}\,(1+c_iq_\ell)\ \prod_{i=1}^{k-1}\,(1-b_iq_\ell)\,  \prod_{i=k+1}^n (1+a_iq_\ell) \,.
\end{equation}
Then
\begin{equation}\label{eqn-detm}
\begin{array}{l}
\ds \det_{1\leq k,\ell\leq n}  \big(\, m_{k,\ell}(q_\ell)\,\big) 
= \prod_{1\leq i<j\leq n+1}(1+c_ic_j)\  \prod_{1\leq i<j\leq n}(b_i+a_j) \ \cr\cr
\ds ~~~~\times~~ \prod_{i=1}^n\ q_i^{-n}(1+q_i^2)\ \prod_{1\leq i<j\leq n}(q_i-q_j)(1+q_iq_j)\,.
\end{array}
\end{equation}
\end{Lemma}

By comparing the expressions  for $\tilde{f}_{k,\ell}(q_\ell)$ and $\tilde{g}_{k,\ell}(q_\ell)$ in (\ref{eqn-tf}) 
and (\ref{eqn-tg}), respectively, 
it can be seen that setting $a_i=y_i$, $b_i=x_i$ and $c_i=\ov{x}_i$  
for $i=1,2,\ldots,n$ and $c_{n+1}=\ov{z}_0$ in Lemma~\ref{lem-detm}  gives
\begin{equation*}
\begin{array}{l}
\ds \det_{1\leq k,\ell\leq n}  \left(\,\tilde{f}_{k,\ell}(q_\ell) - \tilde{f}_{k,\ell}(-\ov{q}_\ell)\,\right) \cr\cr
\ds = Q(\q)\ \prod_{i=1}^n q_i^{-(n+1)}\,(1+\ov{z}_0\ov{x}_i) \prod_{1\leq i<j\leq n}\!\!\!(1+\ov{x}_i\ov{x}_j) \prod_{1\leq i<j\leq n}\!\!\!(x_i+y_j)\cr\cr
\ds = Q(\q)\ \prod_{i=1}^n q_i^{-(n+1)}\,u_i^{-1}\,\ov{x}_i^{n-i}\,(1+z_0x_i) \prod_{1\leq i<j\leq n}\!\!\!(1+x_ix_j)
          \prod_{1\leq i<j\leq n}\!\!\!(1+x_i\ov{y}_j)\cr\cr
\ds = Q(\q)\ Z(\z)\ \prod_{i=1} q_i^{-(n+1)} u_i^{-1}\,.
\end{array}
\end{equation*}
Using this in (\ref{eqn-hKtf}) immediately yields (\ref{eqn-deth}), thereby completing the proof of
Lemma~\ref{lem-deth}.
\end{proof}

An alternative more self-contained proof of Lemma~\ref{lem-detm}, avoiding any reliance on 
Corollary~5.8 of Rosengren and Schlosser~\cite{RosengrenSchlosser}, is provided here in Appendix~A.

\section{Schur functions and universal characters}
\label{sec:characters}

In order to recognise our various weighted sums of ASMs as being deformations of Weyl
character and denominator formulae, it is necessary to introduce some classical results on 
these topics. In particular, for the results in Section \ref{sec:Tok} we will need a number of 
well-known properties of the $GL(n)$ characters known as Schur functions~\cite{Littlewood,Macdonald},
and their extension to the case of $O(2n+1)$ characters.

For each $n\in\N$ and each partition $\mu$ of length $\ell(\mu)\leq n$ 
there exists 
an irreducible representation $V_{GL(n)}^\mu$ of $GL(n)$ of highest weight $\mu$, whose character evaluated for
a group element with eigenvalues $\x=(x_1,\ldots,x_n)$ is given by
\begin{equation}
  \ch V_{GL(n)}^\mu  = s_\mu(\x)\,,
\end{equation}
where, if we let $\delta=(n,n-1,\ldots,1)$, the Schur function $s_\mu(x)$ 
may be defined for an arbitrary sequence of indeterminates $\x=(x_1,x_2,\ldots,x_n)$ by
\begin{equation}\label{eqn-sfn}
  s_\mu(\x) = \frac{a_{\mu+\delta}(\x)}{a_\delta(\x)} 
	   = \frac{\ds \det_{1\leq i,j\leq n} \big(\, x_i^{\mu_j+n+1-j} \,\big)}{\ds\det_{1\leq i,j\leq n} \big(\, x_i^{n+1-j} \,\big)} \,.
\end{equation}
It is easy to see that the Schur function denominator $a_{\delta}(\x)$, as defined here, is given by 
\begin{equation}\label{eqn-adelta}
  a_{\delta}(\x) =  \prod_{i=1}^n\,x_i\, \prod_{1\leq i<j\leq n} (x_i-x_j)\,. 
\end{equation}
Moreover in the expansion of the numerator $a_{\mu+\delta}(\x)$ as a signed sum of monomials of the form 
$\x^\kappa=x_1^{\kappa_1}x_2^{\kappa_2}\cdots x_n^{\kappa}$, the parts of $\kappa$ are
necessarily distinct and there is only one term for which $\kappa$ is 
a partition, namely the leading term $\x^{\mu+\delta}$. It follows that for any strict partition $\lambda$ 
\begin{equation}\label{eqn-alambda}
  [\x^\lambda]\ a_{\mu+\delta}(\x) = \delta_{\lambda,\mu+\delta}\,.
\end{equation}

The decomposition of Schur function products and quotients take the form 
\begin{equation}\label{eqn-LR}
   s_\mu(\x) \ s_\nu(\x)  = \sum_{\lambda\in{\cal P}}\ c_{\mu\nu}^\lambda \ s_\lambda(\x)
	\quad\mbox{and}\quad 
	 s_{\lambda/\mu}(\x) = \sum_{\nu\in{\cal P}}\ c_{\mu\nu}^\lambda \ s_\nu(\x)\,,
\end{equation}
where the coefficients $c_{\mu\nu}^\lambda$ are non-negative integers determined by the famous 
Littlewood-Richardson rule, and the quotient is usually referred to as a skew Schur function. 

Finally, there are three generating functions of relevance here, namely the Cauchy formula~\cite{Macdonald}
\begin{equation}\label{eqn-Cauchy}
    \prod_{i,j=1}^{m,n} \frac{1}{1-x_i y_j} = \sum_{\sigma\in{\cal P}} s_\sigma(\x)\ s_\sigma(\y)
\end{equation}
for any $\x=(x_1,x_2,\ldots,x_m)$ and $\y=(y_1,y_2,\ldots,y_n)$,
and the formulae~\cite{Littlewood}
\begin{equation}\label{eqn-ACalphagamma}
\begin{array}{rcl}
   \ds \prod_{1\leq i<j\leq n}  (1+x_ix_j)  &=& \ds \sum_{\alpha\in{\cal A}={\cal P}_{1}}\ s_\alpha(\x)\,; \cr\cr
   \ds \prod_{1\leq i\leq j\leq n}  (1+x_ix_j)  &=& \ds \sum_{\gamma\in{\cal C}={\cal P}_{1}}\ s_\gamma(\x)\,; \cr\cr
\end{array}	
\end{equation}
where ${\cal P}_t$ is the set of all the partitions which in Frobenius notation~\cite{Macdonald} 
are such that the arm length minus the corresponding leg length is $t$ for any integer $t$.

Now we turn attention to the orthogonal group $SO(2n+1)$ and its corresponding Lie algebra $B_n\cong so(2n+1)$.
For each $n\in\N$ and each partition $\mu$ of length $\ell(\mu)\leq n$ 
there exists an irreducible representation $V_{SO(2n+1)}^\mu$ of $SO(2n+1)$ of highest weight $\mu$, 
whose character evaluated for a group element with eigenvalues $(1,\x,\ov{\x})=(1,x_1,\ldots,x_n,\ov{x}_1,\ldots,\ov{x}_n)$ is given by
\begin{equation}
  \ch V_{SO(2n+1)}^\mu  = so_\mu(1,\x,\ov{\x})\,,
\end{equation}
where
\begin{equation}
 so_\mu(1,\x,\ov{\x})
	   = \frac{\ds \det_{1\leq i,j\leq n} \big(\, x_i^{\mu_j+n-j+1/2}- x_i^{-(\mu_j+n-j+1/2)}  \,\big)}
		         {\ds\det_{1\leq i,j\leq n} \big(\, x_i^{n-j+1/2} - x_i^{-(n-j+1/2)}\,\big)} \,.
\end{equation}
It is possible, and convenient, to extend this definition so as to encompass what is sometimes
referred to as a universal character for the orthogonal group. When expressed in terms of Schur
functions this takes the form
\begin{equation}\label{eqn-sochar}
  so_\mu(\z) = \sum_{\gamma\in{\cal C}} \ (-1)^{|\gamma|/2}\ s_{\mu/\gamma}(\z)
\end{equation}
where now $\z=(z_1,z_2,\ldots,z_{2n+1)})$ involves $2n+1$ parameters that have to be specialised 
to recover the actual group character. It is this universal character that will be shown to emerge 
naturally in our later analysis.

To make contact with Weyl's denominator formula it should be recalled that the set of positive roots 
of the Lie algebra $o(2n+1)$ of the orthogonal group $SO(2n+1)$ is given by
\begin{equation}\label{eqn-roots}
    \Delta^+=\{\epsilon_i\,|\,1\leq i\leq n\}\cup\{\epsilon_i\pm\epsilon_j\,|\, 1\leq i<j\leq n\}\,.
\end{equation}
It follows that Weyl's denominator takes the form
\begin{equation}\label{eqn-odenom}
  \prod_{\alpha\in\Delta^+} (1 - e^{-\alpha}) = \prod_{i=1}^n (1-x_i) \prod_{1\leq i<j\leq n} (1-x_ix_j)(1-x_ix_j^{-1})
\end{equation}
where $x_i=e^{-\epsilon_i}$ for $i=1,2,\ldots,n$. It is deformations of this formula through 
the introduction of additional parameters that we will meet in what follows.

\section{Tokuyama type factorisation}
\label{sec:Tok}

The significance of Lemma~\ref{lem-deth} lies in the fact that it allows us to proceed smoothly
from Theorem~\ref{the-Pdet} to our main result, namely the following
Tokuyama type factorisation theorem involving group characters. 

\begin{Theorem}\label{the-result2}
For all $n\in\N$ let $\lambda=\mu+\delta$ where $\delta=(n,n-1,\ldots,1)$ 
with $\mu$ a partition of length $\ell(\mu)\leq n$. 
Then for $\z=(x_1,x_2,\ldots,x_n,z_0,\ov{y}_n,\ldots,\ov{y}_2,\ov{y}_1)$
and the contributions to the weight $\wgt(P)$ of each primed shifted tableau 
$P\in{\cal P}^\lambda$ as specified in the tabulation (\ref{tab-Pwgt}) we have 
\begin{equation}\label{eqn-Tok}
   \sum_{P\in{\cal P}^{\lambda}(2n+1)} \wgt(P) = \sum_{P\in{\cal P}^\delta(2n+1)} \wgt(P)\  
	                                      \sum_{\gamma\in{\cal C}}  s_{\mu/\gamma} (\z)
\end{equation}
with
\begin{equation}\label{eqn-Zfact}
    \sum_{P\in{\cal P}^\delta(2n+1)} \wgt(P) = 
			\prod_{i=1}^n x_i^{-(n-i)}\ \prod_{i=1}^n (1+z_0x_i) \  
			\prod_{1\leq i<j\leq n} (1+ x_ix_j)(1+x_i\ov{y}_j)\,.
\end{equation}
\end{Theorem}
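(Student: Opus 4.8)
The plan is to deduce the theorem directly from Theorem~\ref{the-Pdet} and Lemma~\ref{lem-deth} by expanding the resulting product of generating functions in terms of Schur functions, using the identities assembled in Section~\ref{sec:characters}. Combining Theorem~\ref{the-Pdet} with (\ref{eqn-deth}) gives
\[
  \sum_{P\in{\cal P}^{\lambda}(2n+1)}\wgt(P)=[q^\lambda]\bigl(Z(\z)\,K(\z,\q)\,Q(\q)\bigr)=Z(\z)\,[q^\lambda]\bigl(K(\z,\q)\,Q(\q)\bigr),
\]
since $Z(\z)$ is independent of $\q$. The first task is to recognise the two remaining factors. Regrouping the product in (\ref{eqn-ZKQ}) shows that $K(\z,\q)=\prod_{i=1}^{n}\prod_{k=1}^{2n+1}(1-q_iz_k)^{-1}$ is precisely the Cauchy kernel for the alphabets $\q$ and $\z=(x_1,\ldots,x_n,z_0,\ov{y}_n,\ldots,\ov{y}_1)$, so by (\ref{eqn-Cauchy}) $K(\z,\q)=\sum_{\sigma\in{\cal P}}s_\sigma(\q)\,s_\sigma(\z)$; and comparison of $Q(\q)$ with (\ref{eqn-adelta}) and the second line of (\ref{eqn-ACalphagamma}) gives $Q(\q)=a_\delta(\q)\sum_{\gamma\in{\cal C}}s_\gamma(\q)$.

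Next I would multiply these out in the variables $\q$ and extract the coefficient of $q^\lambda$. Expanding the product $s_\sigma(\q)s_\gamma(\q)=\sum_{\rho}c_{\sigma\gamma}^\rho\,s_\rho(\q)$ via (\ref{eqn-LR}) and using $a_\delta(\q)\,s_\rho(\q)=a_{\rho+\delta}(\q)$ yields
\[
  K(\z,\q)\,Q(\q)=\sum_{\sigma\in{\cal P}}\ \sum_{\gamma\in{\cal C}}\ \sum_{\rho\in{\cal P}}\ c_{\sigma\gamma}^\rho\ a_{\rho+\delta}(\q)\ s_\sigma(\z).
\]
Because $\lambda=\mu+\delta$ is a strict partition, (\ref{eqn-alambda}) gives $[q^\lambda]\,a_{\rho+\delta}(\q)=\delta_{\rho,\mu}$, so only the $\rho=\mu$ terms survive; then the skew-Schur expansion in (\ref{eqn-LR}) together with the symmetry $c_{\sigma\gamma}^\mu=c_{\gamma\sigma}^\mu$ gives
\[
  [q^\lambda]\bigl(K(\z,\q)\,Q(\q)\bigr)=\sum_{\gamma\in{\cal C}}\ \sum_{\sigma\in{\cal P}}c_{\sigma\gamma}^\mu\,s_\sigma(\z)=\sum_{\gamma\in{\cal C}}s_{\mu/\gamma}(\z).
\]
Hence $\sum_{P\in{\cal P}^{\lambda}(2n+1)}\wgt(P)=Z(\z)\sum_{\gamma\in{\cal C}}s_{\mu/\gamma}(\z)$. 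Taking $\mu$ to be the empty partition (so $\lambda=\delta$), only $\gamma$ empty survives and $s_{\mu/\gamma}(\z)=1$, so $\sum_{P\in{\cal P}^\delta(2n+1)}\wgt(P)=Z(\z)$, which is exactly the right-hand side of (\ref{eqn-Zfact}); combining the two gives (\ref{eqn-Tok}).

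All the analytic content has already been absorbed into Theorem~\ref{the-Pdet} and Lemma~\ref{lem-deth}, so the only thing to be careful about here is the bookkeeping: correctly matching $K$ with the Cauchy kernel and $Q(\q)$ with $a_\delta(\q)$ times the generating function for $\cal C$, and verifying that $[q^\lambda]$ may be moved inside the triple sum --- which is harmless, since for fixed $\lambda$ only the finitely many terms with $|\sigma|+|\gamma|=|\mu|$ can contribute. I do not anticipate any further difficulty.
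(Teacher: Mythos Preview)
Your argument is correct and follows essentially the same route as the paper's own proof: combine Theorem~\ref{the-Pdet} with Lemma~\ref{lem-deth}, identify $K(\z,\q)$ with the Cauchy kernel and $Q(\q)$ with $a_\delta(\q)\sum_{\gamma\in{\cal C}}s_\gamma(\q)$, apply the Littlewood--Richardson duality (\ref{eqn-LR}) and the coefficient extraction (\ref{eqn-alambda}), and then specialise $\mu=0$ to identify $Z(\z)$ with the $\delta$-sum. The only cosmetic difference is the order in which you apply $[q^\lambda]$ and collapse the Littlewood--Richardson sum to a skew Schur function.
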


\begin{proof}
Our previous results taken together imply that
\begin{equation}\label{eqn-proof}
\begin{array}{rcl}
\ds \sum_{P\in{\cal P}^{\lambda}(2n+1)}\!\!\! \wgt(P)
&=&\ds [\q^\lambda]\ \det(\,h_{k,\ell}(q_\ell)\,) 
     = [q^\lambda]\ K(\z,\q)\ Q(\q)\ Z(\z) \cr
&=&\ds Z(\z)\ [q^\lambda] \ \sum_{\sigma\in{\cal P}} s_\sigma(\z) s_\sigma(\q)\ 
		                     a_\delta(q)\ \sum_{\gamma\in{\cal C}} s_\gamma(\q) \cr
&=& \ds Z(\z)\ [q^\lambda]\ \sum_{\tau\in{\cal P}}\ \sum_{\gamma\in{\cal C}} s_{\tau/\gamma}(\z) \ a_\delta(\q)\ s_{\tau}(\q) \cr
&=&\ds  Z(\z)\ \sum_{\tau\in{\cal P}}\ \sum_{\gamma\in{\cal C}} s_{\tau/\gamma}(\z)\ [q^\lambda] \  a_{\tau+\delta}(\q) \cr
&=& \ds  Z(\z)\ \sum_{\tau\in{\cal P}}\ \sum_{\gamma\in{\cal C}} s_{\tau/\gamma}(\z)\ \delta_{\mu+\delta,\tau+\delta}
        =  Z(\z)\ \sum_{\gamma\in{\cal C}}  s_{\mu/\gamma} (\z)\,. 
\end{array}
\end{equation}
Here the successive steps may be justified as follows. The first step just corresponds to noting 
that, for the given $\z$ and $\q$, (\ref{eqn-ZKQ}) and (\ref{eqn-Cauchy}) imply
\begin{equation}
 K(\z,\q)=\prod_{i,j=1}^{2n+1,n} \frac{1}{1-z_iq_j}= \sum_{\sigma\in{\cal P}}\ s_\sigma(\z)\ s_\sigma(\q)\,,
\end{equation}
while (\ref{eqn-ZKQ}), (\ref{eqn-adelta}) and (\ref{eqn-ACalphagamma}) imply
$Q(\q) = \sum_{\gamma\in{\cal C}} a_\delta(\q)\,s_\gamma(\q)$. 
The next step is a consequence of the dual nature of the two formulae of (\ref{eqn-LR}) for Schur function products and quotients.
These imply first that $s_\sigma(\q)\,s_\gamma(\q)=\sum_{\tau\in\cal{P}} c_{\sigma\gamma}^\tau\,s_\tau(\q)$ and then
that $\sum_{\sigma\in\cal{P}} c_{\sigma\gamma}^\tau\,s_\tau(\z)=s_{\tau/\gamma}(\z)$.
The final three steps arise from the Schur function definition (\ref{eqn-sfn}) giving 
$a_\delta(\q)\,s_\tau(\q)=a_{\tau+\delta}(\q)$, followed by the use of (\ref{eqn-alambda}) 
and the fact that by hypothesis $\lambda=\mu+\delta$.

Then setting $\mu=0$ in (\ref{eqn-proof}) gives 
\begin{equation}
  \sum_{P\in{\cal P}^{\delta}(2n+1)} \wgt(P) = Z(\z)\,.
\end{equation}
Substituting this back into (\ref{eqn-proof}) gives (\ref{eqn-Tok}), while the explicit form
of $Z(\z)$ in (\ref{eqn-ZKQ}) gives (\ref{eqn-Zfact}), thereby completing the proof of
the Theorem~\ref{the-result2}.  \end{proof}

If the entries in the primed shifted tableaux $P$ are restricted to those in the alphabet
\begin{equation}
       1'<1<2'<2<\cdots<n'<n<\ov{n}'<\ov{n}<\cdots<\ov{2}'<\ov{2}<\ov{1}'<\ov{1}
\end{equation}
so that entries $0$ and $0'$ are no longer allowed, then we arrive at:
\begin{Theorem}\label{the-result3}
For all $n\in\N$ let $\lambda=\mu+\delta$ where $\delta=(n,n-1,\ldots,1)$ 
with $\mu$ a partition of length $\ell(\mu)\leq n$. 
Then for $\z=(x_1,x_2,\ldots,x_n,1,\ov{y}_n,\ldots,\ov{y}_2,\ov{y}_1)$
and the contributions to the weight $\wgt(P)$ of each primed shifted tableau 
$P\in{\cal P}^\lambda$ as specified in the following table
\begin{equation}\label{tab-Pwgt-2n}
\begin{array}{|l|l|}
\hline
\hbox{Diagonal weights}&\hbox{Off-diagonal weights} \cr
\hline
 k\mapsto (-1)^{n-k+1}x_k \prod_{j>k} (x_j\ov{y}_j) &k\mapsto I\,x_k\cr
 \ov{k}\mapsto 1    &k'\mapsto -I\,y_k\cr
                    &\ov{k}\mapsto I\,\ov{y}_k\cr
                    &\ov{k}'\mapsto -I\,\ov{x}_k\cr
\hline
\end{array}
\end{equation}
we have 
\begin{equation}\label{eqn-Pwgt-lambda-bn}
   \sum_{P\in{\cal P}^{\lambda}(2n)} \wgt(P) = \sum_{P\in{\cal P}^\delta(2n)} \wgt(P)\  
	                                     I^{|\mu|} \ \sum_{\gamma\in{\cal C}} (-1)^{|\gamma|/2} s_{\mu/\gamma} (\z)
\end{equation}
with
\begin{equation}\label{eqn-Pwgt-delta-bn}
    \sum_{P\in{\cal P}^\delta(2n)} \wgt(P) = 
			\prod_{i=1}^n (I\,x_i)^{-(n-i)}\ \prod_{i=1}^n (1-x_i) \  
			\prod_{1\leq i<j\leq n} (1-x_ix_j)(1-x_i\ov{y}_j)\,.
\end{equation}
\end{Theorem}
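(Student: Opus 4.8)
The plan is to obtain Theorem~\ref{the-result3} from Theorem~\ref{the-result2} by the substitution $z_0=I$, $x_i\mapsto I\,x_i$ and $y_i\mapsto -I\,y_i$ for $i=1,2,\ldots,n$, where $I=\sqrt{-1}$. Since Theorem~\ref{the-result2} is a Laurent‑polynomial identity valid for all non‑zero parameters, this specialisation is legitimate. Under it the weight table (\ref{tab-Pwgt}) passes to (\ref{tab-Pwgt-2n}): each off‑diagonal weight of an entry $k$ or $\ov{k}$ acquires a factor $I$ and that of $k'$ or $\ov{k}'$ a factor $-I$ (because $\ov{x}_k\mapsto 1/(I\,x_k)=-I\,\ov{x}_k$, and similarly for the others), while the diagonal weight $z_0x_k\prod_{j>k}(x_j\ov{y}_j)$ becomes $(-1)^{n-k+1}x_k\prod_{j>k}(x_j\ov{y}_j)$, the sign coming as $I^2=-1$ from $z_0x_k$ times $(-1)^{n-k}$ from the product; the entries $0$ and $0'$ acquire weights $I$ and $-I$. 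Simultaneously the vector $\z=(x_1,\ldots,x_n,z_0,\ov{y}_n,\ldots,\ov{y}_1)$ of Theorem~\ref{the-result2} becomes $I\,\z'$ with $\z'=(x_1,\ldots,x_n,1,\ov{y}_n,\ldots,\ov{y}_1)$, exactly the vector of (\ref{eqn-Pwgt-lambda-bn}).

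The substantive point is that, after this substitution, the sum over ${\cal P}^\lambda(2n+1)$, in which the entries $0,0'$ are now weighted $I,-I$, coincides with the sum over ${\cal P}^\lambda(2n)$ weighted by (\ref{tab-Pwgt-2n}). I would see this through Theorem~\ref{the-Pdet}. Putting $z_0=I$ makes the factor $(1+\ov{z}_0q_\ell)/(1-z_0q_\ell)$ in $f_{k,\ell}(q_\ell)$ equal to $1$, and a direct check then shows that the substituted functions $f_{k,\ell}(q_\ell)$ and $g_{k,\ell}(q_\ell)$ of (\ref{eqn-f}) and (\ref{eqn-g}), with $u_k=(-1)^{n-k+1}x_k\prod_{i>k}x_i\ov{y}_i$ and $\ov{v}_k=1$, are precisely the generating functions for the weights of individual rows beginning with $k$ and with $\ov{k}$ respectively among the primed shifted tableaux of ${\cal P}^\lambda(2n)$: the lattice‑path construction in the proof of Theorem~\ref{the-Pdet} applies word for word once the height‑$0$ line of the grid, together with the two kinds of edge weighted $z_0$ and $\ov{z}_0$, is suppressed. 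Hence $[q^\lambda]\det_{1\leq k,\ell\leq n}\big(h_{k,\ell}(q_\ell)\big)$ equals, on the one hand, the substituted $\sum_{P\in{\cal P}^\lambda(2n+1)}\wgt(P)$ (by Theorem~\ref{the-Pdet} under the substitution) and, on the other, $\sum_{P\in{\cal P}^\lambda(2n)}\wgt(P)$ with weights (\ref{tab-Pwgt-2n}) (by its verbatim analogue), which identifies the two sums.

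Everything else is bookkeeping. Lemma~\ref{lem-deth}, and with it the Cauchy‑formula and Littlewood--Richardson steps in the proof of Theorem~\ref{the-result2}, apply unchanged after the substitution, so the displayed value is $Z(I\z')\sum_{\gamma\in{\cal C}}s_{\mu/\gamma}(I\z')$ with $Z$ as in (\ref{eqn-ZKQ}); equivalently the collapsed determinant is still covered by Lemma~\ref{lem-detm}, now with the parameter sequence $\c$ of length $n$. Using $I^2=-1$, $Z(I\z')$ simplifies to $\prod_{i=1}^n(I\,x_i)^{-(n-i)}\prod_{i=1}^n(1-x_i)\prod_{1\leq i<j\leq n}(1-x_ix_j)(1-x_i\ov{y}_j)$, which is the product in (\ref{eqn-Pwgt-delta-bn}) and, on setting $\mu=0$, equals $\sum_{P\in{\cal P}^\delta(2n)}\wgt(P)$. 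Finally, since $s_{\mu/\gamma}$ is homogeneous of degree $|\mu|-|\gamma|$ and every $\gamma\in{\cal C}$ has $|\gamma|$ even, $\sum_{\gamma\in{\cal C}}s_{\mu/\gamma}(I\z')=I^{|\mu|}\sum_{\gamma\in{\cal C}}I^{-|\gamma|}s_{\mu/\gamma}(\z')=I^{|\mu|}\sum_{\gamma\in{\cal C}}(-1)^{|\gamma|/2}s_{\mu/\gamma}(\z')$, the factor appearing in (\ref{eqn-Pwgt-lambda-bn}). Assembling these gives (\ref{eqn-Pwgt-lambda-bn}) and (\ref{eqn-Pwgt-delta-bn}). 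I expect the middle paragraph to be the only real obstacle --- checking that setting $z_0=I$ faithfully models the passage to the $2n$‑letter alphabet, so that the tableaux containing $0$ or $0'$ contribute nothing net to the substituted $(2n+1)$‑sum; the remaining steps are routine tracking of signs and powers of $I$.
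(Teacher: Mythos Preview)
Your proposal is correct and follows essentially the same route as the paper: set $z_0=I$ so that the factor $(1+\ov{z}_0q_\ell)/(1-z_0q_\ell)$ in $f_{k,\ell}$ collapses to $1$, thereby excluding the $0,0'$ entries at the lattice-path level, and then substitute $x_i\mapsto I\,x_i$, $\ov{y}_i\mapsto I\,\ov{y}_i$ in Theorem~\ref{the-result2}. Your write-up is considerably more explicit than the paper's two-sentence proof---in particular you spell out the sign bookkeeping for $Z(I\z')$ and the homogeneity argument giving $I^{|\mu|}(-1)^{|\gamma|/2}$---but the idea is the same; the only minor imprecision is the aside about Lemma~\ref{lem-detm} ``with $\c$ of length $n$'', since strictly one has $c_{n+1}=\ov{z}_0=-I$ rather than a shortened sequence, though this does not affect your argument, which correctly goes through Theorem~\ref{the-result2} directly.
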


\begin{proof}
Entries $0$ and $0'$ are excluded in the sum over lattice paths associated with $P$ merely by setting $z_0=I=\sqrt{-1}$
since in this case the factor $(1+\ov{z}_0q_\ell)/(1-z_0q_\ell)$ appearing in $f_{k,l}(q_\ell)$ in (\ref{eqn-f}) reduces to $1$.
The result then follows from Theorem~\ref{the-result2} under the additional substitutions $x_i=I\,x_i$ and $\ov{y}_i=I\,\ov{y}_i$
for $i=1,2,\ldots,n$.
\end{proof}

\section{Corollaries}
\label{sec:corollaries}

Thanks to Lemma~\ref{lem-AP} the result Theorem~\ref{the-result2} 
can be restated immediately in terms of ASMs as 
\begin{Corollary}\label{cor-Tok-bnprime}
For all $n\in\N$ let $\lambda=\mu+\delta$ where $\delta=(n,n-1,\ldots,1)$ 
with $\mu$ a partition of length $\ell(\mu)\leq n$. 
Then for $\z=(x_1,x_2,\ldots,x_n,z_0,\ov{y}_n,\ldots,\ov{y}_2,\ov{y}_1)$
and the contributions to the weight $\wgt(A)$ of each $\lambda$-HTSASM 
$A\in{\cal B'}^\lambda_n$ as specified in the tabulation (\ref{tab-Cwgt}) of Definition~\ref{def-Awgt} we have 
\begin{equation}\label{eqn-bnprime-lambda}
   \sum_{A\in{\cal B'}^{\lambda}_n} \wgt(A) = \sum_{A\in{\cal B'}^\delta_n} \wgt(A)\  
	                                      \sum_{\gamma\in{\cal C}}  s_{\mu/\gamma} (\z)
\end{equation}
with
\begin{equation}\label{eqn-bnprime-delta}
    \sum_{A\in{\cal B'}_n^\delta} \wgt(A) = 
			\prod_{i=1}^n (1+z_0x_i) \  
			\prod_{1\leq i<j\leq n} (1+ x_ix_j)(1+x_i\ov{y}_j)\,.
\end{equation}
\end{Corollary}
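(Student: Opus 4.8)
The plan is to obtain Corollary~\ref{cor-Tok-bnprime} directly by composing Lemma~\ref{lem-AP} with Theorem~\ref{the-result2}, since the weighting of $\lambda$-HTSASMs fixed in the tabulation (\ref{tab-Cwgt}) of Definition~\ref{def-Awgt} is exactly the one for which Lemma~\ref{lem-AP} converts the ASM sum into a sum over primed shifted tableaux carrying the weights (\ref{tab-Pwgt}) that appear in Theorem~\ref{the-result2}. First I would apply Lemma~\ref{lem-AP} in the form (\ref{eqn-AP}) to rewrite the left-hand side of (\ref{eqn-bnprime-lambda}) as $\prod_{i=1}^n x_i^{n-i}\,\sum_{P\in{\cal P}^\lambda(2n+1)}\wgt(P)$. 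Then I would invoke Theorem~\ref{the-result2}, equation (\ref{eqn-Tok}), to replace $\sum_{P\in{\cal P}^\lambda(2n+1)}\wgt(P)$ by $\sum_{P\in{\cal P}^\delta(2n+1)}\wgt(P)\,\sum_{\gamma\in{\cal C}}s_{\mu/\gamma}(\z)$, and finally apply Lemma~\ref{lem-AP} once more, now with $\delta$ in place of $\lambda$, to recognise $\prod_{i=1}^n x_i^{n-i}\,\sum_{P\in{\cal P}^\delta(2n+1)}\wgt(P)$ as $\sum_{A\in{\cal B'}^\delta_n}\wgt(A)$. Re-collecting the prefactor $\prod_{i=1}^n x_i^{n-i}$ then yields (\ref{eqn-bnprime-lambda}) verbatim.

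It remains only to record the explicit product form (\ref{eqn-bnprime-delta}). For this I would combine the two factorisations already used: by Lemma~\ref{lem-AP} with $\lambda=\delta$ and by the expression (\ref{eqn-Zfact}) for $\sum_{P\in{\cal P}^\delta(2n+1)}\wgt(P)$ in Theorem~\ref{the-result2},
\[
   \sum_{A\in{\cal B'}^\delta_n}\wgt(A) = \prod_{i=1}^n x_i^{n-i}\ \prod_{i=1}^n x_i^{-(n-i)}\ \prod_{i=1}^n(1+z_0x_i)\ \prod_{1\leq i<j\leq n}(1+x_ix_j)(1+x_i\ov{y}_j),
\]
and the two powers of each $x_i$ cancel, leaving precisely (\ref{eqn-bnprime-delta}).

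There is no genuine obstacle here: all the substantive work has already been done at the level of primed shifted tableaux in Theorem~\ref{the-result2}, and the only thing to verify is the bookkeeping, namely that the factor $\prod_{i=1}^n x_i^{n-i}$ produced when passing from ASMs to primed shifted tableaux in Lemma~\ref{lem-AP} is exactly annihilated by the $\prod_{i=1}^n x_i^{-(n-i)}$ that occurs in the factor $Z(\z)$ underlying Theorem~\ref{the-result2}. The one point I would state explicitly is that the weight $\wgt(A)$ in the corollary is the one of Definition~\ref{def-Awgt} rather than the four-parameter version of Theorem~\ref{the-result1}; the passage to the latter is effected afterwards by the substitution $x_i\mapsto s_ix_i$, $y_i\mapsto\ov{t}_iy_i$ noted just after Definition~\ref{def-Awgt}, but that refinement belongs to a subsequent corollary and is not needed here.
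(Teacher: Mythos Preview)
Your proposal is correct and follows exactly the paper's own approach: the paper simply states that, thanks to Lemma~\ref{lem-AP}, Theorem~\ref{the-result2} can be restated immediately in terms of ASMs, and you have merely made explicit the bookkeeping with the factor $\prod_{i=1}^n x_i^{n-i}$ that this restatement entails.
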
 

It can be seen that (\ref{eqn-bnprime-lambda}) is a direct generalisation of Tokuyama's identity (\ref{eqn-tok-lambda}) from the 
general linear group $GL(n)$ to the orthogonal group $SO(2n+1)$ in which 
not only is Weyl's denominator deformed 
from (\ref{eqn-odenom}) to (\ref{eqn-bnprime-delta}) through the dependence on both $x_i$ and $y_i$, but so is the
corresponding character which is expressed here as a sum of Schur functions that can be thought of as a deformation
of the expression for the universal orthogonal group character $so_\mu(\z)$ specified in (\ref{eqn-sochar}).

We have concentrated so far on the $\lambda$-HTSASMs of type ${\cal B'}^\lambda_n$ with an odd number of rows, but our results also 
encompass those of type ${\cal B}^\lambda_n$ with an even number of rows. The latter are in bijective correspondence with the former
restricted to those cases in which the central row has no non-zero entries to the right of the central $1$. The bijection
just involves the removal of both the central row and the central column.
This is illustrated in the case $n=3$ and $\lambda=\delta=(3,2,1)$ by
\begin{equation}
\begin{array}{ccccc}
\mbox{Type $A\in B'_3$:}&&\mbox{Type $A\in B_3$:}&&T\cr\cr
\left[
\begin{array}{ccccccc}
0 & 0 & 0 & 0 & 1 & 0 & 0 \\
0 & 1 & 0 & 0 &\ov1 & 0 & 1 \\
0 & 0 & 1 & 0 & 0 & 0 & 0\\
0 & 0 & 0 & 1 & 0 & 0 & 0  \\
0 & 0 & 0 & 0 & 1 & 0 & 0  \\
1 & 0 &\ov1 & 0 & 0 & 1 & 0 \\
0 & 0 & 1 & 0 & 0 & 0 & 0 \\
\end{array}
\right]
&\Leftrightarrow&
\left[
\begin{array}{cccccc}
0 & 0 & 0  & 1 & 0 & 0 \\
0 & 1 & 0 &\ov1 & 0 & 1 \\
0 & 0 & 1 &  0 & 0 & 0\\
0 & 0 & 0 & 1 & 0 & 0  \\
1 & 0 &\ov1 & 0 & 1 & 0 \\
0 & 0 & 1 & 0 & 0 & 0 \\
\end{array}
\right]
&\Leftrightarrow&
{\vcenter
  {\offinterlineskip
 \halign{&\mystrut\vrule#&\mybox{\hss$#$\hss}\cr
  \hr{7}\cr
                      &1&&2 &&2   &\cr     
  \hr{7}\cr
                      \omit& &&\ov3&&  \ov2 &\cr          
  \nr{2}&\hr{5}\cr
                      \omit& &\omit& &&\ov2   &\cr      
  \nr{4}&\hr{3}\cr  
}}}	
\end{array}
\label{asm-B'3B3}
\end{equation}
The corresponding shifted tableau $T$ is the same in both cases, and as shown above contains no entries $0$.
This observation remains true for all $n$ and all strict partitions $\lambda$.

To eliminate contributions to our various sums over weights, $\wgt(A)$, from those $A\in{\cal B'}^\lambda_n$ that are 
not of the above type it suffices to note that they will necessarily contain at least one entry $\ov1=-1$ 
in the central row, and correspondingly an entry $\NS$ in row $i=n+1$ of the associated CPM. Such an entry gives rise to 
a factor $(z_0+\ov{z}_0)$ in the expression for $\wgt(A)$, as can be seen from the weight assignments given 
in (\ref{tab-Cwgt}). The required elimination is therefore automatically accomplished by 
setting $z_{0}=I=\sqrt{-1}$. 
If one does this, and then for convenience replaces $x_i$ and $\ov{y}_i$ by $Ix_i$ and $I\ov{y}_i$, respectively,
one arrives at the following corollary that could also be obtained from Theorem~\ref{the-result3}:

\begin{Corollary}\label{cor-Tok-bn}
For $n\in\N$ let $\x=(x_1,x_2,\ldots,x_{n})$, $\y=(y_1,y_2,\ldots,y_n)$
and $\z=(x_1,\ldots,x_n,1,\ov{y}_n,\ldots,\ov{y}_1)$. 
For any $\lambda=\mu+\delta$ with $\delta=(n,n-1,\ldots,1)$ and $\mu$ a partition of length $\ell(\mu)\leq n$
and $\lambda_1=m\geq n$, let ${\cal B}^\lambda_n$ be the set of all $2n\times m$ 
$\lambda$-HTSASMs. For each $A\in{\cal B}^\lambda_n$ let $C$ be the corresponding $2n\times m$
CPM with matrix elements $c_{ij}$ and let
\begin{equation}\label{eqn-Awgt-bn}
   \wgt(A) = (-1)^{n(n-1)/2} \prod_{i=1}^n\ x_i^{n-i}\ (-1)^{L_{i+1}} (x_i\ov{y}_i)^{L_i} \ x_i^{L_{i+1}-L_i}\  \prod_{i=1}^{2n+1}\prod_{j=1}^m \wgt(c_{ij})
\end{equation}
where $\wgt(c_{ij})$ is as tabulated below
\begin{equation}\label{tab-Cwgt-bn}
\begin{array}{|l|l|l|}
\hline
\hbox{Entry}&i\leq n&i>n\cr
\hbox{at $(i,j)$}&j\geq1&j\geq1\cr
\hline
\WE&1&1\cr
\NS&x_i-y_i&\ov{y}_{2n+1-i}-\ov{x}_{2n+1-i}\cr
\NE&1&1\cr
\SE&1&1\cr
\NW&-y_i&-\ov{x}_{2n+1-i}\cr
\SW&x_i&\ov{y}_{2n+1-i}\cr
\hline
\end{array}
\end{equation}
Then 
\begin{equation}\label{eqn-Tok-bn-lambda}
      \sum_{A\in{\cal B}^\lambda_n} \ \wgt(A) = \sum_{A\in{\cal B}^\delta_n} \  \wgt(A) \ \ \sum_{\gamma\in{\cal C}} (-1)^{|\gamma|/2} s_{\mu/\gamma} (\z)\,,
\end{equation}
where
\begin{equation}\label{eqn-Tok-bn-delta}
			\sum_{A\in{\cal B}^\delta_n} \wgt(A) = 
			\prod_{i=1}^n (1-x_i)\ \prod_{1\leq i<j\leq n} (1-x_ix_j)(1-x_i\ov{y}_j)\,.
\end{equation} 
\end{Corollary}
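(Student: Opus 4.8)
The plan is to obtain Corollary~\ref{cor-Tok-bn} from Corollary~\ref{cor-Tok-bnprime} (equivalently from Theorem~\ref{the-result2} by way of Lemma~\ref{lem-AP}) by setting $z_0=I=\sqrt{-1}$ and then relabelling the surviving parameters. The starting point is the bijection sketched just above between ${\cal B}^\lambda_n$ and the subset of those $A'\in{\cal B'}^\lambda_n$ whose central row contains no entry $-1$; this bijection simply adjoins to $A\in{\cal B}^\lambda_n$ a central row and column whose only non-zero entry is the $1$ in position $(n+1,n+1)$. Any $A'\in{\cal B'}^\lambda_n$ outside the image of this bijection carries an entry $-1$, hence an entry $\NS$, in row $n+1$ of its associated CPM, and (\ref{tab-Cwgt}) weights such an entry by $z_0+\ov{z}_0$. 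Since this vanishes at $z_0=I$, specialising $z_0=I$ kills every term of $\sum_{A'\in{\cal B'}^\lambda_n}\wgt(A')$ outside the image; after the specialisation the $\lambda$-sum of (\ref{eqn-bnprime-lambda}) therefore becomes a sum over ${\cal B}^\lambda_n$ and the $\delta$-sum of (\ref{eqn-bnprime-delta}) a sum over ${\cal B}^\delta_n$.

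The second step is to substitute $x_i\mapsto Ix_i$ and $\ov{y}_i\mapsto I\ov{y}_i$ for $i=1,\ldots,n$ and then match both sides with Corollary~\ref{cor-Tok-bn}. On the weight side one records the following: the $L$-statistics of the image of $A$ agree with those of $A$ throughout the range used in the prefactor; the relabelling multiplies each compass-point weight $\NS$, $\NW$, $\SW$ of (\ref{tab-Cwgt}) by $I$ while leaving $\WE$, $\NE$, $\SE$ (all equal to $1$) untouched; the central row of the image contributes only weight-$1$ entries; and relabelling the prefactor $\prod_i x_i^{n-i}(x_i\ov{y}_i)^{L_i}(z_0x_i)^{L_{i+1}-L_i}$ at $z_0=I$ produces the factor $I^{\,n(n-1)/2}$ together with $\prod_i(-1)^{L_{i+1}}$. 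Combining these with the counting identity $\#\NS+\#\NW+\#\SW=|\lambda|-n=|\mu|+n(n-1)/2$ — which follows from (\ref{eqn-wenese}), the values $L_1=0$ and $L_{2n+1}=n$, and the column-sum pattern of a $\lambda$-HTSASM — one checks that, after the two operations, $\wgt(A')$ equals $I^{|\mu|}$ times the weight $\wgt(A)$ prescribed by (\ref{eqn-Awgt-bn})--(\ref{tab-Cwgt-bn}), uniformly in $A$. On the character side, $s_{\mu/\gamma}(\z)$ is homogeneous of degree $|\mu|-|\gamma|$ and $|\gamma|$ is even for every $\gamma\in{\cal C}$; hence the substitution sends $\z=(x_1,\ldots,x_n,z_0,\ov{y}_n,\ldots,\ov{y}_1)$ with $z_0=I$ to $I$ times the vector $(x_1,\ldots,x_n,1,\ov{y}_n,\ldots,\ov{y}_1)$ that is the $\z$ of Corollary~\ref{cor-Tok-bn}, so $\sum_{\gamma\in{\cal C}}s_{\mu/\gamma}$ becomes $I^{|\mu|}\sum_{\gamma\in{\cal C}}(-1)^{|\gamma|/2}s_{\mu/\gamma}(\z)$, and the $\delta$-product of (\ref{eqn-bnprime-delta}) becomes (\ref{eqn-Tok-bn-delta}). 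The two factors $I^{|\mu|}$ — one from the weights, one from the character — cancel, leaving exactly (\ref{eqn-Tok-bn-lambda}); putting $\mu=0$ then recovers (\ref{eqn-Tok-bn-delta}). The same statement could alternatively be reached from Theorem~\ref{the-result3} via an analogue of Lemma~\ref{lem-AP} for $2n$-rowed HTSASMs.

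I expect the main obstacle to be exactly the $\sqrt{-1}$ bookkeeping of the second step. The relabelling injects one power of $I$ for each of the $\#\NS+\#\NW+\#\SW$ relevant compass-point entries, one for each factor $x_i$ appearing in the prefactor, and a factor $I^2=-1$ for each occurrence of $z_0=I$; these have to be shown to collapse — using $\#\NS+\#\NW+\#\SW=|\mu|+n(n-1)/2$ together with $L_1=0$ and $L_{2n+1}=n$ — into precisely the signs $(-1)^{n(n-1)/2}$ and $\prod_i(-1)^{L_{i+1}}$ of (\ref{eqn-Awgt-bn}) and a leftover $I^{|\mu|}$ that is absorbed by the matching $I^{|\mu|}$ on the character side. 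Everything else (the bijection, the homogeneity argument, the $\mu=0$ specialisation) is routine.
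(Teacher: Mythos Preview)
Your proposal is correct and follows essentially the same route as the paper: specialise Corollary~\ref{cor-Tok-bnprime} by setting $z_0=I=\sqrt{-1}$ to kill the $A'\in{\cal B'}^\lambda_n$ outside the image of the bijection with ${\cal B}^\lambda_n$, then relabel $x_i\mapsto Ix_i$, $\ov{y}_i\mapsto I\ov{y}_i$. The paper states these two moves and asserts the corollary follows; you supply the $I$-bookkeeping (the counting identity $\#\NS+\#\NW+\#\SW=|\mu|+n(n-1)/2$ and the homogeneity of $s_{\mu/\gamma}$) that the paper omits, and you correctly note the alternative derivation via Theorem~\ref{the-result3}. One tiny slip: the telescoping of $\sum_i(L_{i+1}-L_i)$ over the $(2n+1)$ rows of $A'$ terminates at $L_{2n+2}=n$, not $L_{2n+1}$; if instead you telescope over the $2n$ rows of $A$ then $L_{2n+1}=n$ is right, and since row $n+1$ of $A'$ contributes no $\NS$, $\NW$, or $\SW$ the two counts agree and your identity stands either way.
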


This time while we have a minor variation of the deformation of Weyl's denominator formula (\ref{eqn-odenom}) obtained
in the ${\cal B'}_n$ case, the deformed character that arises in this ${\cal B}_n$ case is none other than the universal 
orthogonal group character $so_\mu(\z)$ defined in (\ref{eqn-sochar}).

To recover all the results stated in Section~\ref{sec:ASMweights} it is only necessary to specialise appropriately
the parameters appearing in Corollary~\ref{cor-Tok-bnprime}. The required specialisations are as follows:

\begin{equation}\label{tab-theorems}
\begin{array}{|l|l|l|l|l|}
\hline
\hbox{Parameters}&\lambda-\hbox{HTSASMs}&x_i&z_0&\ov{y}_i\cr
\hline
\hbox{Theorem \ref{the-result1}}&A\in{\cal B'}^\lambda_n&s_ix_i&z_0&t_i\ov{y}_i\cr
\hbox{Theorem \ref{the-Tabony}}&A\in{\cal B}^\lambda_n&It_ix_i&I&It_i\ov{x}_i\cr
\hbox{Theorem \ref{the-Simpson}}&A\in{\cal B'}^\delta_n&tx_i&t&t\ov{x}_i\cr
\hbox{Theorem \ref{the-Okada}}&A\in{\cal B}^\delta_n&Itx_i&It&It\ov{x}_i\cr
\hline
\end{array}
\end{equation}

Undertaking the first two specialisations immediately yields the hitherto unknown factors appearing in 
Theorem \ref{the-result1} and Theorem \ref{the-Tabony}:
\begin{equation}
\begin{array}{rcl}
\Phi_{B'_n}^\mu(\z)&=&  \sum_{\gamma\in{\cal C}} s_{\mu/\gamma} (\z) \cr\cr
\Phi_{B_n}^\mu(\z)&=& I^{|\mu|} \sum_{\gamma\in{\cal C}} (-1)^{|\gamma|/2} s_{\mu/\gamma} (\z)= I^{|\mu|}\, so_\mu(x_1,\ldots,x_n,1,\ov{y}_n,\ldots,\ov{y}_1),
\end{array}
\end{equation}

\section{Addendum}
\label{sec:BS}

While this work was in progress a further private communication received from Brubaker prompted
us to consider a final generalisation of our deformation parameters to accommodate  
the universal parameterisation of Brubaker and Schultz~\cite{BrubakerSchultz}. 
This takes the form of the assignment of 
universal weights $\wgt(c_{ij})$ to compass point matrix elements shown on the left:
\begin{equation}\label{tab-BSuniversal-wgts}
\begin{array}{|l|l|l|l|l|}
\hline
\hbox{Entry}&\hbox{Universal}&i<n+1&i=n&i>n+1\cr
\hbox{at $(i,j)$}&\hbox{all $i$}&j\geq1&j\geq1&j\geq1\cr
\hline
\WE&c_2^{(i)}&1&1&1\cr
\NS&c_1^{(i)}&a_1^{(i)}a_2^{(i)}+b_1^{(i)}b_2^{(i)}&(a_0^{(0)})^2+(b_0^{(0)})^2&a_1^{(\bar{\imath})}a_2^{(\bar{\imath})}+b_1^{(\ov{i})}b_2^{(\ov{i})}\cr
\NE&b_2^{(i)}&b_2^{(i)}&b_0^{(0)}&b_1^{(\bar{\imath})}\cr
\SE&a_2^{(i)}&a_2^{(i)}&a_0^{(0)}&a_1^{(\bar{\imath})}\cr
\NW&a_1^{(i)}&a_1^{(i)}&a_0^{(0)}&a_2^{(\bar{\imath})}\cr
\SW&b_1^{(i)}&b_1^{(i)}&b_0^{(0)}&b_2^{(\bar{\imath})}\cr
\hline
\end{array}
\end{equation}
We have set this alongside those weights adopted by Brubaker and Schultz~\cite{BrubakerSchultz} that are appropriate to the $B'_n$ case as dictated by  
the free-fermion condition
\begin{equation}
   c_1^{(i)}c_2^{(i)} = a_1^{(i)}a_2^{(i)}+b_1^{(i)}b_2^{(i)} \,,
\end{equation}
together with the symmetry conditions appropriate to $U$-turn $\lambda$-HTSASMs
\begin{equation}
   a_1^{(i)}=a_2^{(\bar{\imath})},\quad  b_1^{(i)}=b_2^{(\bar{\imath})}, \quad c_1^{(i)}=c_1^{(\bar{\imath})} ,\quad c_2^{(i)}=c_2^{(\bar{\imath})} \,,
\end{equation}
and the normalisation condition
\begin{equation}
   c_2^{(i)}=1 \,,
\end{equation}
all for $i=1,2,\ldots,2n+1$. The notation used here is such
that $\bar{\imath}=2n+2-i$ for all $i=1,2,\ldots,2n+1$, $a_0^{(0)}=a_1^{(n+1)}=a_2^{(n+1)}$ and $b_0^{(0)}=b_1^{(n+1)}=b_2^{(n+1)}$.

Greatly encouraged by correspondence with Brubaker, we were led to the following extension
of the result appropriate to ${\cal B}_\ast^\lambda$ in 
the long list of factorisation results appearing in Theorem~4.5 of~\cite{BrubakerSchultz}:
\begin{Theorem}\label{the-BS-Tok}
For all $n\in\N$ let $\lambda=\mu+\delta$ where $\delta=(n,n-1,\ldots,1)$ 
with $\mu$ a partition of length $\ell(\mu)\leq n$ and $m=\lambda_1$, and let
\begin{equation}\label{eqn-BS-z}
\z=\left(b_1^{(1)}/a_2^{(1)},\ldots,b_1^{(n)}/a_2^{(n)},b_0^{(0)}/a_0^{(0)},b_2^{(n)}/a_1^{(n)},\ldots,b_2^{(1)}/a_1^{(1)} \right)
\end{equation}
Then for each $\lambda$-HTSASM $A\in{\cal B'}^\lambda_n$ let 
\begin{equation}\label{eqn-BSwgtA}
  \wgt(A) =  w_0w_1\ \prod_{i=1}^n \left(\frac{b_1^{(i)}}{a_2^{(i)}}\right)^{\!n-i}\!\!    
			                                \left(\frac{b_0^{(0)}b_1^{(i)}}{a_0^{(0)}a_2^{(i)}}\right)^{\!L_{i+1}-L_i}\!\! 
																	    \left(\frac{b_1^{(i)}b_2^{(i)}}{a_2^{(i)}a_1^{(i)}}\right)^{\!L_i} \                                                                                       \prod_{i=1}^{2n+1}\prod_{j=1}^m \wgt(c_{ij})
\end{equation}
with 
\begin{equation}\label{eqn-w0w1}
   w_0 = \left( a_0 \prod_{i=1}^n a_1^{(i)} a_2^{(i)} \right)^{m-n} \,,~~~
	 w_1 =  a_0^n \prod_{i=1}^n (a_1^{(i)})^{i-1} (a_2^{(i)})^{2n-i} 
\end{equation}
and $\wgt(c_{ij})$ specified by
\begin{equation}\label{eqn-BSwgtC}.
\begin{array}{|l|l|l|l|l|}
\hline
\hbox{Entry}&i<n+1&i=n&i>n+1\cr
\hbox{at $(i,j)$}&j\geq1&j\geq1&j\geq1\cr
\hline
\WE&1&1&1\cr
\NS&b_1^{(i)}/a_2^{(i)}+a_1^{(i)}/b_2^{(i)}&b_0^{(0)}/a_0^{(0)}+a_0^{(0)}/b_0^{(0)}&b_2^{(\bar{\imath})}/a_1^{(\bar{\imath})}+a_2^{(\bar{\imath})}/b_1^{(\bar{\imath})}\cr
\NE&1&1&1\cr
\SE&1&1&1\cr
\NW&a_1^{(i)}/b_2^{(i)}&a_0^{(0)}/b_0^{(0)}&a_2^{(\bar{\imath})}/b_1^{(\bar{\imath})}\cr
\SW&b_1^{(i)}/a_2^{(i)}&b_0^{(0)}/a_0^{(0)}&b_2^{(\bar{\imath})}/a_1^{(\bar{\imath})}\cr
\hline
\end{array}
\end{equation}

Then 
\begin{equation}\label{eqn-BS-lambda} 
   \sum_{A\in{\cal B'}_n^\lambda} \wgt(A) = \sum_{A\in{\cal B'}_n^\delta} \wgt(A)\  w_0  \sum_{\gamma\in{\cal C}}  s_{\mu/\gamma} (\z)
\end{equation}
and, as derived by Brubaker and Schultz~\cite{BrubakerSchultz},
\begin{equation}\label{eqn-BS-delta} 
    \sum_{A\in{\cal B'}_n^\delta} \wgt(A) =
			\prod_{i=1}^n (a_0^{(0)}a_2^{(i)}+b_0^{(0)}b_1^{(i)}) 
			\!\!\prod_{1\leq i<j\leq n}\! (a_2^{(i)}a_1^{(j)}+b_1^{(i)}b_2^{(j)})  (a_2^{(i)}a_2^{(j)}+b_1^{(i)}b_1^{(j)}) \,.
\end{equation}
\end{Theorem}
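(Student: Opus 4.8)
The plan is to recognise Theorem~\ref{the-BS-Tok} as Corollary~\ref{cor-Tok-bnprime} read through a change of parameters, supplemented by a short bookkeeping argument that tracks the scalar prefactors $w_0$ and $w_1$.

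\textbf{Step 1: the dictionary.} First I would introduce the substitution
\[
  x_i\ \longmapsto\ \frac{b_1^{(i)}}{a_2^{(i)}}\,,\qquad
  \ov{y}_i\ \longmapsto\ \frac{b_2^{(i)}}{a_1^{(i)}}\,,\qquad
  z_0\ \longmapsto\ \frac{b_0^{(0)}}{a_0^{(0)}}\,,
\]
so that also $y_i\mapsto a_1^{(i)}/b_2^{(i)}$, $\ov{x}_i\mapsto a_2^{(i)}/b_1^{(i)}$ and $\ov{z}_0\mapsto a_0^{(0)}/b_0^{(0)}$. An entry-by-entry comparison (using $\bar{\imath}=2n+2-i$ in the rows $i>n+1$) shows that this substitution carries the weight table~(\ref{tab-Cwgt}) of Definition~\ref{def-Awgt} onto the table~(\ref{eqn-BSwgtC}), and carries the prefactor $\prod_{i=1}^n x_i^{n-i}(x_i\ov{y}_i)^{L_i}(z_0x_i)^{L_{i+1}-L_i}$ of~(\ref{eqn-Awgt}) onto the product occurring inside~(\ref{eqn-BSwgtA}). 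Since the identities of Corollary~\ref{cor-Tok-bnprime} are Laurent-polynomial identities in $x_i,y_i,z_0$, they survive this substitution. Consequently, writing $\wgt_0(A)$ for the weight of Definition~\ref{def-Awgt} and $\wgt(A)$ for the weight of~(\ref{eqn-BSwgtA}), one has $\wgt(A)=w_0w_1\,\wgt_0(A)$ identically in the new parameters, for every $A\in{\cal B'}^\lambda_n$; and the sequence $\z=(x_1,\dots,x_n,z_0,\ov{y}_n,\dots,\ov{y}_1)$ of Corollary~\ref{cor-Tok-bnprime} maps precisely to the $\z$ of~(\ref{eqn-BS-z}).

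\textbf{Step 2: transport the factorisation.} Next I would apply Corollary~\ref{cor-Tok-bnprime}, in its $\lambda$ form~(\ref{eqn-bnprime-lambda}), after the substitution. The point to exploit is that $w_1$ does not depend on $m$, whereas $w_0=\big(a_0^{(0)}\prod_{i=1}^n a_1^{(i)}a_2^{(i)}\big)^{m-n}$ equals $1$ when $m=n$, i.e.\ on the $\delta$ side. Starting from $\sum_{{\cal B'}^\lambda_n}\wgt_0=\big(\sum_{{\cal B'}^\delta_n}\wgt_0\big)\sum_{\gamma\in{\cal C}}s_{\mu/\gamma}(\z)$, multiplying through by $w_0w_1$, and then regrouping so that the factor $w_1$ combines with $\sum_{{\cal B'}^\delta_n}\wgt_0$ into $\sum_{{\cal B'}^\delta_n}\wgt$ (legitimate since on the $\delta$ side $w_0=1$, so $\sum_{{\cal B'}^\delta_n}\wgt=w_1\sum_{{\cal B'}^\delta_n}\wgt_0$), yields exactly~(\ref{eqn-BS-lambda}), the surviving $w_0$ being the one attached to $\lambda$, for which $m=\lambda_1$.

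\textbf{Step 3: the $\delta$ product.} Finally I would obtain~(\ref{eqn-BS-delta}) by substituting into the explicit product~(\ref{eqn-bnprime-delta}) and multiplying by $w_1$. Under the substitution each factor becomes $1+z_0x_i=(a_0^{(0)}a_2^{(i)}+b_0^{(0)}b_1^{(i)})/(a_0^{(0)}a_2^{(i)})$, $1+x_ix_j=(a_2^{(i)}a_2^{(j)}+b_1^{(i)}b_1^{(j)})/(a_2^{(i)}a_2^{(j)})$ and $1+x_i\ov{y}_j=(a_2^{(i)}a_1^{(j)}+b_1^{(i)}b_2^{(j)})/(a_2^{(i)}a_1^{(j)})$, and a count of exponents shows that the denominator assembled from all of these factors is precisely $(a_0^{(0)})^n\prod_{k=1}^n (a_2^{(k)})^{2n-k}(a_1^{(k)})^{k-1}=w_1$. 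Hence multiplication by $w_1$ clears the denominators and leaves the Brubaker--Schultz product on the right of~(\ref{eqn-BS-delta}). The only genuine labour is this last exponent count — confirming that $w_1$ is exactly the compensating monomial — which I expect to be the main, though purely mechanical, obstacle; everything else is a translation of results already established.
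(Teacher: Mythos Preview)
Your proposal is correct and follows essentially the same route as the paper's proof: both identify the substitution $x_i=b_1^{(i)}/a_2^{(i)}$, $y_i=a_1^{(i)}/b_2^{(i)}$, $z_0=b_0^{(0)}/a_0^{(0)}$ that carries the weighting of Definition~\ref{def-Awgt} onto that of~(\ref{eqn-BSwgtA})--(\ref{eqn-BSwgtC}) up to the scalars $w_0w_1$, invoke Corollary~\ref{cor-Tok-bnprime}, observe that $w_0=1$ when $\lambda=\delta$, and interpret $w_1$ as the monomial that renormalises~(\ref{eqn-bnprime-delta}) to~(\ref{eqn-BS-delta}). Your Step~3 spells out the exponent count for $w_1$ a little more explicitly than the paper does, but otherwise the arguments coincide.
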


\begin{proof}
First it should be noted that the passage from the Brubaker and Schultz weighting specified in the final three columns of (\ref{tab-BSuniversal-wgts})
to that of (\ref{eqn-BSwgtA})--(\ref{eqn-BSwgtC}) is accomplished through the use of the crucial compass point matrix identities
(\ref{eqn-wenese}) and the symmetry condition (\ref{eqn-Lsymmetry}). Then comparison of (\ref{eqn-BSwgtA}) 
and the tabulation (\ref{eqn-BSwgtC}) with (\ref{eqn-Awgt}) and the tabulation (\ref{tab-Cwgt}), respectively, 
of Definition~\ref{def-Awgt} in Section~\ref{sec:PSTandLPwgts} shows that with the identification
\begin{equation}\label{eqn-xyz-ab}
  z_0 = b_0^{(0)}/a_0^{(0)}  \quad\hbox{and}\quad
   x_i = b_1^{(i)}/a_2^{(i)},~~ y_i = a_1^{(i)}/b_2^{(i)} \quad\hbox{for $i=1,2,\ldots,n$}
\end{equation}
the hypotheses of Theorem~\ref{the-BS-Tok} coincide with those of Corollary~\ref{cor-Tok-bnprime} apart from the
inclusion of additional scaling factors $w_0$ and $w_1$. 
The factor $w_0$ emerges unscathed on the right hand side of (\ref{eqn-BS-lambda})
since $w_0=1$ in the case $m=n$ that arises when $\lambda=\delta$.
The other additional factor $w_1$ is just what is required to
rewrite the outcome (\ref{eqn-bnprime-delta}) of Corollary~\ref{cor-Tok-bnprime}, which was deliberately
normalised so as to have leading term $1$, in the form (\ref{eqn-BS-delta}) which has leading term $w_1$.
The final observation is that $\z$ as defined in Corollary~\ref{cor-Tok-bnprime} reduces to $\z$ as specified in
(\ref{eqn-BS-z}) under the identification (\ref{eqn-xyz-ab}). \end{proof}

It should be noted that this Theorem~\ref{the-BS-Tok} serves to determine explicitly the ratio of partition functions
appearing in the ${\cal B}^\lambda_\ast$ model of Brubaker and Schultz~\cite{BrubakerSchultz}. This ratio takes the form
\begin{equation}
  {\cal Z}({\cal B}^\lambda_\ast) \big/  {\cal Z}({\cal B}^\delta_\ast)  = w_0  \sum_{\gamma\in{\cal C}}  s_{\mu/\gamma} (\z)
\end{equation}
with $w_0$ and $\z$ defined in terms of the Brubaker-Schultz parameters by (\ref{eqn-w0w1}) and (\ref{eqn-BS-z}), respectively.

Finally, it may be noted that on comparing our formalism with that of Brubaker and Schultz, our labelling of the vertices in the square-ice configuration model are identical, but the configurations themselves are different: our diagrams are upside down and left-right reversed compared with those of \cite{BrubakerSchultz}. This implies that the compass point matrix elements differ by the mutual interchange of $\NE$ and $\SW$, and of $\SE$ and $\NW$. However,
with the relabelling of rows from top to bottom, the $U$-turn symmetry conditions for $\lambda$-HTSASMs are just what is required to ensure that our weighting used in Theorem~\ref{the-BS-Tok} coincides with that of Brubaker and Schultz.
\bigskip \bigskip

\noindent{\bf Acknowledgements} \medskip

We have made extensive use of Maple to generate examples and verify results.
We are grateful to the organizers of the Banff International Research Station for Mathematical Innovation and Discovery (BIRS) 2010 5-day workshop, ``Whittaker Functions, Crystal Bases, and Quantum Groups'' where the first author (AMH) first learned of this problem. She also acknowledges the hospitality of the Department of Combinatorics and Optimization, University of Waterloo during 2013-4, and the support of a
Discovery Grant from the Natural Sciences and Engineering Research Council of
Canada (NSERC). The second author (RCK) is grateful for the hospitality and financial support
extended to him while visiting both Wilfrid Laurier University, Waterloo and the Center for Combinatorics at Nankai
University, Tianjin where some of this work was carried out. 
Both authors are pleased to acknowledge support from ICERM for their participation in a workshop of the 
Semester Program ''Automorphic Forms, Combinatorial Representation Theory and Multiple Dirichlet Series''.
Finally, the authors are extremely grateful to Professor Brubaker both for his general encouragement and for
making so much of his own work and that of his colleagues so readily available.   
\bigskip



\bigskip
\section{Appendix}
\medskip


\renewcommand{\theequation}{A.\arabic{equation}}

Our aim here is to provide an independent proof of Lemma~\ref{lem-detm} which for convenience we repeat here as
\begin{appxlem}\label{lem-detm-A}
For $n\in\N$, let $\q=(q_1,q_2,\ldots,q_n)$, $\a=(a_1,a_2,\ldots,a_n)$,
$\b=(b_1,b_2,\ldots,b_n)$ and $\c=(c_1,c_2,\ldots,c_n,c_{n+1})$
be four sequences of indeterminates, with $q_i\neq0$ and $\ov{q}_i=q_i^{-1}$ for $i=1,2,\ldots,n$,
and set $m_{k,\ell}(q_\ell)=\tilde{g}_{k,\ell}(q_\ell)-\tilde{g}_{k,\ell}(-\ov{q}_\ell)$
where
\begin{equation}\label{eqn-tg-A}
\tilde{g}_{k,\ell}(q_\ell) = q_\ell^{-n}\ \prod_{i=1}^{n+1}\,(1+c_iq_\ell)\ \prod_{i=1}^{k-1}\,(1-b_iq_\ell)\,  \prod_{i=k+1}^n (1+a_iq_\ell) \,.
\end{equation}
Then
\begin{equation}\label{eqn-detm-A}
\begin{array}{l}
\det_{1\leq k,\ell\leq n}  \big(\, m_{k,\ell}(q_\ell)\,\big) \cr
\ds = \prod_{1\leq i<j\leq n}\!\!\!\! (b_i+a_j)\!\! \prod_{1\leq i<j\leq n+1}\!\!\!\!\! (1+c_ic_j)\ 
\prod_{i=1}^n\ q_i^{-n}(1+q_i^2)\!\!\prod_{1\leq i<j\leq n}\!\!\! (q_i-q_j)(1+q_iq_j)\,.
\end{array}
\end{equation}
\end{appxlem}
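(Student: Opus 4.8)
The plan is to prove Lemma~\ref{lem-detm-A} by the identification-of-factors method applied directly to $D:=\det_{1\le k,\ell\le n}\big(m_{k,\ell}(q_\ell)\big)$, extracting the right-hand side of (\ref{eqn-detm-A}) one factor at a time. First I would record two structural facts. Writing $\tilde g_{k,\ell}(q)=q^{-n}P_k(q)$ with $P_k(q)=\prod_{i=1}^{n+1}(1+c_iq)\prod_{i=1}^{k-1}(1-b_iq)\prod_{i=k+1}^n(1+a_iq)$, a polynomial of degree exactly $2n$, and computing directly
\begin{equation*}
\tilde g_{k,\ell}(-\ov q)=(-1)^n\,q^{-n}\prod_{i=1}^{n+1}(q-c_i)\prod_{i=1}^{k-1}(q+b_i)\prod_{i=k+1}^n(q-a_i)\,,
\end{equation*}
one sees that $q^{n}m_{k,\ell}(q)$ is a genuine polynomial of degree $\le 2n$ in $q$, so that $\widehat D:=\prod_{\ell=1}^{n}q_\ell^{\,n}\cdot D$ is a polynomial with $\deg_{q_\ell}\widehat D\le 2n$; and, since $-\ov{(-\ov q)}=q$, one has $m_{k,\ell}(-\ov q_\ell)=-m_{k,\ell}(q_\ell)$ identically.

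Next I would pull out all the $\q$-factors. Each entry of column $\ell$ vanishes at $q_\ell=\pm\sqrt{-1}$ (because $-\ov{(\pm\sqrt{-1})}=\pm\sqrt{-1}$), so $(1+q_\ell^{2})\mid\widehat D$; columns $i$ and $j$ coincide when $q_i=q_j$ and are proportional with ratio $-1$ when $q_iq_j=-1$, so $(q_i-q_j)(1+q_iq_j)\mid\widehat D$ for $i<j$. These factors have $q_\ell$-degree $2+(n-1)+(n-1)=2n=\deg_{q_\ell}\widehat D$, forcing
\begin{equation*}
\widehat D=\prod_{\ell=1}^{n}(1+q_\ell^{2})\ \prod_{1\le i<j\le n}(q_i-q_j)(1+q_iq_j)\ \cdot\ D'(\a,\b,\c)
\end{equation*}
with $D'$ free of $\q$; equivalently $D=\prod_i q_i^{-n}(1+q_i^{2})\prod_{i<j}(q_i-q_j)(1+q_iq_j)\cdot D'$. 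The task thus reduces to showing $D'=\prod_{1\le i<j\le n}(b_i+a_j)\prod_{1\le i<j\le n+1}(1+c_ic_j)$.

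For the factors $(b_i+a_j)$, $i<j$, I would use the splitting $\tilde g_{k,\ell}(q)=q^{-n}\prod_{r=1}^{n+1}(1+c_rq)\cdot R_k(q)$ in which the prefactor is $k$-independent and $R_k(q)=\prod_{r<k}(1-b_rq)\prod_{r>k}(1+a_rq)$, so each row of the matrix is a fixed linear function of the pair $\big(R_k(q_\ell),R_k(-\ov q_\ell)\big)_{\ell}$. When $a_j=-b_i$ the $j-i+1$ polynomials $R_i,\dots,R_j$ share, beyond their automatic common divisor $\prod_{r<i}(1-b_rq)\prod_{r>j}(1+a_rq)$, the extra factor $(1-b_iq)$; dividing this out leaves $j-i+1$ polynomials of degree $\le j-i-1$, necessarily linearly dependent, and the resulting relation among the $R_k$ produces a vanishing linear combination of rows $i,\dots,j$, so $D$, hence $D'$, vanishes there. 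For the factors $(1+c_ic_j)$, $i<j\le n+1$, I would use that $D'$ is symmetric in $c_1,\dots,c_{n+1}$ (so it suffices to treat $c_1c_2=-1$) together with the identity $(1+c_1q)(1+c_2q)=-(q-c_1)(q-c_2)$ valid on $c_1c_2=-1$: substituting it into the formulas for $\tilde g_{k,\ell}(q)$ and $\tilde g_{k,\ell}(-\ov q)$ shows that on this locus every entry $m_{k,\ell}(q_\ell)$ acquires the factor $(q_\ell-c_1)(q_\ell-c_2)$, whence $(q_\ell-c_1)\mid\widehat D|_{c_1c_2=-1}$; but the previously extracted $\q$-factors are coprime to $q_\ell-c_1$, so $(q_\ell-c_1)\mid D'|_{c_1c_2=-1}$, and since $D'$ is free of $\q$ this forces $D'|_{c_1c_2=-1}=0$. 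A degree count then closes the argument: $\deg_{c_r}D\le n$, $\deg_{a_j}D\le j-1$ and $\deg_{b_i}D\le n-i$ (dictated by which rows the variable occurs in), matching the corresponding degrees of $F:=\prod_{i<j\le n}(b_i+a_j)\prod_{i<j\le n+1}(1+c_ic_j)$; since $F\mid D'$ and the quotient $D'/F$ is free of all of $\q,\a,\b,\c$, it is a numerical constant, equal to $1$ by inspection of the coefficient of $\prod_\ell q_\ell^{-n}$ (or of the case $n=1$, where $m_{1,1}(q_1)=(1+c_1c_2)(q_1+q_1^{-1})$).

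The hard part will be the identification of the $(1+c_ic_j)$ factors: a coincidence among the $c$'s, unlike one between an $a$ and a $b$, does not degenerate the matrix directly, so one must instead observe that it injects a spurious $q_\ell$-dependent factor into every entry and then play that against the $\q$-independence of $D'$ established earlier; getting that bookkeeping and the accompanying degree counts exactly right is where the care lies. A possible alternative is to expand each $m_{k,\ell}(q_\ell)$ in the basis $\phi_r(q)=q^r-(-1)^rq^{-r}$, $1\le r\le n$, of the space of $q\mapsto-\ov q$ antisymmetric Laurent polynomials supported on the exponents $-n,\dots,n$, so that $D=\det_{k,r}(M_{kr})\cdot\det_{r,\ell}\big(\phi_r(q_\ell)\big)$; after the substitution $q=\sqrt{-1}\,p$ the second determinant is the classical $C_n$ Weyl denominator and supplies the $\q$-part at once, but evaluating $\det_{k,r}(M_{kr})$ still calls for essentially the same factor identifications, so this only relocates the difficulty rather than removing it.
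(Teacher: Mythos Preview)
Your identification-of-factors argument is essentially correct and genuinely different from the paper's route. The paper proceeds constructively: repeated subtractions of row $k+1$ from row $k$ extract $\prod_{i<j}(b_i+a_j)$ and leave the simplified entries $\tilde m_{k,\ell}(q_\ell)=L(\c,q_\ell)\,\ov q_\ell^{\,k}-L(\c,-\ov q_\ell)(-q_\ell)^k$; these are then expanded as $(q_\ell+\ov q_\ell)\sum e_{\cdot}(\c)\,h_{\cdot}(q_\ell,-\ov q_\ell)$, a telescoping identity for the $h$'s (their Lemma~A.2) allows column subtractions that pull out the $\q$-factors, and a final Jacobi--Trudi type evaluation (their Lemma~A.3) identifies the residual $\c$-determinant as $\prod_{i<j}(1+c_ic_j)$. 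Your approach is more conceptual and sidesteps both auxiliary lemmas; the paper's makes the mechanism of each factorisation completely explicit. Your treatment of the $(1+c_ic_j)$ factors---forcing a spurious $q_\ell$-dependent factor into $D'$ and then using that $D'$ is $\q$-free---is especially neat.

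There is, however, a real gap in your last step: neither of your two suggested checks pins down the constant $D'/F$ for general $n$. The $n=1$ verification only settles that single case; nothing in your argument rules out an $n$-dependent constant. And the coefficient of $\prod_\ell q_\ell^{-n}$ in $D$ is
\[
\det_{1\le k,\ell\le n}\Big(1+(-1)^{n-k}\textstyle\prod_i c_i\prod_{i<k}b_i\prod_{i>k}a_i\Big),
\]
a matrix whose entries do not depend on $\ell$, so this determinant vanishes for $n\ge 2$; consistently, the same coefficient on the product side is also zero since $\prod_{i<j}(q_i-q_j)$ has no constant term. So the comparison is $0=0$ and yields no information. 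You need a monomial or specialisation for which both sides are visibly nonzero---for instance, your own Cauchy--Binet decomposition $D=\det(M_{kr})\det_{r,\ell}(\phi_r(q_\ell))$ does the job once you evaluate the second factor (a standard Weyl-denominator computation) and then read off the sign of $\det(M)=\pm F$ from, say, its top monomial in the $b_i$'s; alternatively an induction on $n$ via the leading coefficient in $c_{n+1}$ works. Either way, this step needs more than one sentence.
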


\begin{proof}
Our starting point is the required determinant expressed in the form 
\begin{equation}
\begin{array}{l}
\ds \det_{1\leq k,\ell\leq n}  \big(\, m_{k,\ell}(q_\ell)\,\big) 
= \det_{1\leq k,\ell\leq n} \left( L(\c,q_\ell)\ \ov{q}_\ell^{n} \prod_{i=1}^{k-1} (1-b_iq_\ell) \prod_{i=k+1}^n (1+a_iq_\ell) \right. \cr
\ds ~~~~~~~~~~~~~~~~~~~~~~~~~~~~~~~~~~~\left. - L(\c,-\ov{q}_\ell)\ (-q_\ell)^{n} \prod_{i=1}^{k-1} (1+b_i\ov{q}_\ell) \prod_{i=k+1}^n (1-a_i\ov{q}_\ell) \right)\cr
\end{array}
\end{equation}
where
\begin{equation}
L(\c,q_\ell)= \prod_{i=1}^{n+1} (1+c_iq_\ell) \,.
\end{equation}

We first separate out the dependence on the various $a_i$ and $b_i$ as follows.
Subtracting row $k+1$ from row $k$ yields a factor $(b_k+a_{k+1})q_\ell$ from the
left hand term and a factor $(b_k+a_{k+1})/(-q_\ell)$ from the right hand term for $k=1,2,\ldots,n-1$.
We can take out the common factor $b_k+a_{k+1}$ and reduce the powers of $\ov{q}_\ell$ and $(-q)_\ell$ from
$n$ to $n-1$. Then subtracting row $k+1$ from row $k$ yields
a common factor $b_k+a_{k+2}$ for $k=1,2,\ldots,n-2$ while the powers $\ov{q}_\ell$ and $(-q)_\ell$ are reduced
from $n-1$ to $n-2$. Continuing in this way one finds that
\begin{equation}
\begin{array}{l}
\ds \det_{1\leq k,\ell\leq n}\left(m_{k\ell}(q_\ell)\right) = \prod_{1\leq i<j\leq n}\!\! (b_i+a_j)\ \cr
\ds \times \det_{1\leq k,\ell\leq n} 
\left( L(\c,q_\ell)\ \ov{q}_\ell^{k}\ \prod_{i=1}^{k-1} (1-b_iq_\ell)\  -  L(\c,-\ov{q}_\ell)\ (-q_\ell)^{k}\ \prod_{i=1}^{k-1} (1+b_i\ov{q}_\ell) \right)\,.\cr
\end{array}
\end{equation}
Now adding $b_{k}$ times row $k$ to row $k+1$ for $k=1,2,\ldots,n-1$, followed by adding  $b_{k}$ times row $k$ to row $k+2$ for $k=1,2,\ldots,n-2$, and again continuing in this way removes all further dependence on $b_k$ for any $k$, so that
\begin{equation}\label{eqn-mtm}
  \det_{1\leq k,\ell\leq n}  \big(\, m_{k,\ell}(q_\ell)\,\big) 
	= \prod_{1\leq i<j\leq n} (b_i+a_j) \det_{1\leq k,\ell\leq n}  \big(\, \tilde{m}_{k,\ell}(q_\ell)\,\big) \,,
\end{equation}
where
\begin{equation}\label{eqn-tmLq}
	\tilde{m}_{k,\ell}(q_\ell) = L(\c,q_\ell)\ \ov{q}_\ell^{k}\ -  L(\c,-\ov{q}_\ell)\ (-q)_\ell^{k}\,.
\end{equation}
This provides the $\a$ and $\b$ dependence required in (\ref{eqn-detm-A}). 

Next we want to separate out the dependence on $\c$ and $\q$. First it should be noted
that $L(\c,q)$ is nothing other than the generating function for the elementary
symmetric functions $e_m(\c)$ in the sense that
\begin{equation}
    L(\c,q) = \prod_{i=1}^{n+1} (1+q\,c_i) = \sum_{m=0}^{n+1} q^m\, e_m(\c)\,.
\end{equation}
Hence
\begin{equation}\label{Eq-meq}
    \tilde{m}_{k,\ell}(q_\ell) = \sum_{m=0}^{n+1} e_m(\c)\ \left( q_\ell^{m-k}-(-1)^{m-k}q_\ell^{k-m}\right)\,.
\end{equation}
The term $m=k$ is identically zero and reorganising the remaining terms gives
\begin{equation}\label{Eq-tmeh}
    \tilde{m}_{k,\ell}(q_\ell) = (q_\ell+\ov{q}_\ell)\left( \sum_{r=0}^{k-1} (\!-\!1)^r e_{k-1-r}(\c) h_r(q_\ell,\!-\!\ov{q_\ell})
                        + \sum_{r=0}^{n-k} e_{k+1+r}(\c) h_r(q_\ell,\!-\!\ov{q}_\ell)\right).
\end{equation}
To obtain this, use has been made of the fact that
\begin{equation}
 \frac{q^{r+1}+(-1)^r q^{-r-1}}{q+q^{-1}}= \sum_{s=0}^r (-1)^s q^{r-2s}=\sum_{s=0}^r q^{r-s}(-\ov{q})^s
  = h_r(q,-\ov{q})
\end{equation}
for all $r\geq0$ and all non zero $q$, where $h_r(q,-\ov{q})$ is the complete
homogeneous symmetric function of the two parameters $q$ and $-1/q$. Hence
\begin{equation}\label{eqn-tmdeteh}
\begin{array}{l}
\ds \det_{1\leq k,\ell\leq n} \left(\tilde{m}_{k\ell}(q_\ell)\right) =\prod_{i=1}^n\ (q_i+\ov{q}_i)\cr
\ds \ \times\ \det_{1\leq k,\ell\leq n}\left( \sum_{r=0}^{k-1} (-1)^r e_{k-1-r}(\c) h_r(q_\ell,-\ov{q_\ell})
                        + \sum_{r=0}^{n-k} e_{k+1+r}(\c) h_r(q_\ell,-\ov{q}_\ell) \right)
\end{array}												
\end{equation}

To proceed we need a further Lemma:
\begin{appxlem}\label{lem-hr} 
For any $n\in\N$ let $\y=(y_1,y_2,\ldots,y_n)$ be a sequence of indeterminates, and let
$p,q$ be two non-zero indeterminates. Then for all $\y$ and all $r\geq 0$ 
\begin{equation}\label{eqn-hr}
h_r(p,-\ov{p},\y)-h_r(q,-\ov{q},\y) = (p-q)(1+\ov{p}\ov{q})\ h_{r-1}(p,-\ov{p},q,-\ov{q},\y)\,, 
\end{equation}
where, as usual, $h_{-1}(p,-\ov{p},q,-\ov{q},\y)=0$ for all $\y$ in the $r=0$ case.
\end{appxlem}

\begin{proof}
For any alphabet $\x=(x_1,x_2,\ldots,x_m)$ the generating function for $h_r(\x)$ takes the form
\begin{equation}
   \prod_{i=1}^m (1-t\, x_i)^{-1} = \sum_{r=0}^\infty\ t^r\, h_r(\x)\,.
\end{equation}
It follows that
\begin{equation}
\begin{array}{l}
\ds   \sum_{r=0}^\infty\ t^r\, \left( h_r(p,-\ov{p},\y) -  h_r(q,-\ov{q},\y) \right) \cr
\ds   = \left( ((1-t\,p)(1+t/p))^{-1} - ((1-t\,q)(1+t/q))^{-1} \right) \prod_{i=1}^n (1-t\, y_i)^{-1} \cr
\ds   = t\,(p-q)(1+1/pq)\ ((1-t\,p)(1+t/p)(1-t\,q)(1+t/q))^{-1} \prod_{i=1}^n (1-t\, y_i)^{-1} \cr
\ds   = t\,(p-q) (1+1/pq) \ \sum_{s=0}^\infty\ t^s\, h_s(p,-\ov{p},q,-\ov{q},\y)\,.
\end{array}
\end{equation}  
Comparing the coefficients of $t^r$ on both sides gives the required result (\ref{eqn-hr}) for all $r\geq 0$.
\end{proof}

Returning to our problem of evaluating the determinant on the right of (\ref{eqn-tmdeteh}),
subtracting column $\ell-1$ from column $\ell$ for $\ell=2,\ldots,n$ and using the above Lemma 
one can extract the common factors $(q_{\ell-1}-q_\ell)(1+\ov{q}_{\ell-1}\ov{q}_\ell)$ that are independent of 
the row number $k$ to leave terms in which each $h_r(q_\ell,-\ov{q}_\ell)$ is replaced by 
$-h_{r-1}(q_{\ell-1},-\ov{q}_{\ell-1},q_{\ell},-\ov{q}_\ell)$ for $\ell=2,\ldots,n$. It might be noted in particular
that that the number of terms is reduced since those in $h_0(q_\ell,-\ov{q}_\ell)$ go to $0$. 
Repeating this process of subtracting column $\ell-1$ from column $\ell$ but now for $\ell=3,\ldots,n$ gives 
rise to common factors $(q_{\ell-2}-q_\ell)(1+\ov{q}_{\ell-2}\ov{q}_\ell)$ and terms in 
$h_{r-2}(q_{\ell-2},-\ov{q}_{\ell-2},q_{\ell-1},-\ov{q}_{\ell-1},q_{\ell},-\ov{q}_\ell)$. Continuing one finds
\begin{equation}
\begin{array}{l}
\ds \det_{1\leq k,\ell\leq n} \left(\tilde{m}_{k\ell}(q_\ell)\right)
= \prod_{i=1}^n (q_i+\ov{q}_i) \prod_{1\leq i<j\leq n} (q_i-q_j)(1+\ov{q}_i\ov{q}_j) \cr
\ds \ \times\ \det_{1\leq k,\ell\leq n}\left( \sum_{r=0}^{k-\ell} (-1)^{r} e_{k-\ell-r}(\c) h_r^{(\ell)}(\q)
                        + \sum_{r=0}^{n+1-k-\ell} (-1)^{\ell-1} e_{k+\ell+r}(\c) h_r^{(\ell)}(\q) \right) \,.
\end{array}                        
\end{equation}
where we have set $h_r^{(\ell)}(\q):=h_r(q_1,\ov{q}_1,q_2,\ov{q}_2,\ldots,q_\ell,\ov{q}_\ell)$
for all $\ell=1,2,\ldots,n$. 

Remarkably the displayed determinant is independent of $\q$. To see this one should notice
that $n$th column contains only two non-zero terms namely $(-1)^{n-1}e_{n+1}(\u)$ in row $k=1$,
and $e_0(\u)$ in row $k=n$, where we have used the fact that $h_0^{(n)}(\q)=1$. 
Subtracting $(-1)^{n-\ell}h_{n-\ell}^{(\ell)}(\q)$ times this $n$th column 
from the $\ell$th column for all $\ell=1,2,\ldots,n-1$ eliminates all terms in $h_{n-\ell}^{(\ell)}(\q)$
from the $\ell$th column. This leaves the $(n-1)$th column bereft of any dependence on $\q$ since
$h_0^{(n-1)}(\q)=1$. One then
repeats the process subtracting $(-1)^{n-1-\ell}h_{n-1-\ell}^{(\ell)}(\q)$ times this $(n-1)$th column 
from the $\ell$th column for all $\ell=1,2,\ldots,n-2$ eliminating all terms in $h_{n-1-\ell}^{(\ell)}(\q)$.
Continuing, the only surviving terms are those in $h_r^{(\ell)}(\q)$ with $r=0$.
Thus
\begin{equation}
\begin{array}{l}
\ds \det_{1\leq k,\ell\leq n} \left(\tilde{m}_{k\ell}(q_\ell)\right)
= \prod_{i=1}^n q_i \prod_{1\leq i<j\leq n}(q_i-q_j) \prod_{1\leq i\leq j\leq n} (1+\ov{q}_i\ov{q}_j) \cr\cr
\ds \ \times\ \det_{1\leq k,\ell\leq n}\left( e_{k-\ell}(\c)\chi(k-\ell\geq0) + (-1)^{\ell-1} e_{k+\ell}(\c) \chi(k+\ell\leq n+1)\right)\,.
\end{array}
\end{equation}
where $\chi(P)$ is the truth function whereby $\chi(P)$ is $1$ if the proposition $P$ is true and $0$ otherwise.
This has separated out all the dependence on $\q$, leaving the dependence on $\c$ expressed as 
a determinant of elementary symmetric functions. Moreover the truth functions may be dropped since
$e_r(\c)=0$ for all integer $r<0$ and $e_r(\c)=0$ for all $r>n+1$ as $\c=(c_1,c_2,\ldots,c_{n+1})$
has only $n+1$ components. Hence 
\begin{equation}
\begin{array}{l}
\ds \det_{1\leq k,\ell\leq n} \left(\tilde{m}_{k\ell}(q_\ell)\right) 
= \prod_{i=1}^n q_i^{-n} \prod_{1\leq i<j\leq n}(q_i-q_j) \prod_{1\leq i\leq j\leq n} (1+ q_iq_j) \cr\cr
\ds ~~~~~~~~~~~~~~~~
\ \times \ \det_{1\leq k,\ell\leq n}\left( e_{k-\ell}(\c) + (-1)^{\ell-1} e_{k+\ell}(\c) \right)
\end{array}
\end{equation}
where the $\q$ dependent factors have been rearranged so as to show that they conform 
precisely with what is required in (\ref{eqn-detm-A}).
 
To complete our proof we need just one more lemma.
\begin{appxlem}\label{lem-A}
For $n\in\N$ and any sequence of indeterminates $\c=(c_1,c_2,\ldots,c_{n+1})$ we have
\begin{equation}\label{eqn-edet}
\det_{1\leq k,\ell\leq n}\left( e_{k-\ell}(\c) + (-1)^{\ell-1} e_{k+\ell}(\c)\right)
=\prod_{1\leq i<j\leq n+1}(1+c_ic_j) \,.
\end{equation}
\end{appxlem}

\begin{proof}
We follow a procedure introduced in~\cite{HamKin}, applied this time to a
determinant whose entries involve a signed sum of a pair of elementary symmetric functions 
rather than as previously a weighted sum of a pair of complete homogeneous symmetric functions.

The expansion of the determinant on the left of (\ref{eqn-edet}) yields
\begin{equation}\label{eqn-edet-kappa}
\sum_{r=0}^n \ \sum_{\kappa} (-1)^{\ell_1+\ell_2+\cdots+\ell_r-r} \det_{1\leq k,\ell\leq n}\left( e_{\kappa_\ell-\ell+k}(\c) \right)
\end{equation}
where the sum is over those $\kappa$ such that $\kappa_\ell=2\ell$ for $\ell\in\{\ell_1,\ell_2,\ldots,\ell_r\}$
with $n\geq \ell_1>\ell_2>\cdots>\ell_r\geq1$ and $\kappa_\ell=0$ otherwise. 
Clearly $\kappa$ is not a partition, but we may permute the columns of 
the determinant on the right, keeping track of the number of transpositions of
columns. This gives
\begin{equation}\label{eqn-edet-kappa2}
(-1)^{\ell_1+\ell_2+\cdots+\ell_r-r} \det_{1\leq k,\ell\leq n}\left( e_{\kappa_\ell-\ell+k}(\c) \right)
= \det_{1\leq k,\ell\leq n}\left( e_{\gamma_\ell-\ell+k}(\c) \right) = s_{\gamma'}(\c)\,
\end{equation}
where the second equality is just the dual Jacobi-Trudi identity,
and $\gamma$ is the partition given in Frobenius notation by
\begin{equation}
  \gamma=\left( \begin{array}{cccc} \ell_1&\ell_2&\cdots&\ell_r\cr  \ell_1-1&\ell_2-1&\cdots&\ell_r-1\cr \end{array} \right)\,.
\end{equation}
Thus we have $\gamma\in{\cal C}={\cal P}_{1}$, the set of all the partitions which in Frobenius notation 
are such that each arm length exceeds the corresponding leg length by $1$. In fact the sum over $\kappa$
yields all such partitions $\gamma$ of arm length at most $n$, since the only restriction is that
$n\geq \ell_1>\ell_2>\cdots>\ell_r\geq1$. It follows that
\begin{equation}\label{eqn-edet-gamma}
\begin{array}{l}
\ds \det_{1\leq k,\ell\leq n}\left( e_{k-\ell}(\c) + (-1)^{\ell-1} e_{k+\ell}(\c)\right) \cr\cr
\ds~~~~~~~~~~~~~ = \sum_{\gamma\in{\cal C};\gamma_1\leq n+1}\!\!\! s_{\gamma'}(\c)\
= \sum_{\alpha\in{\cal A};\ell(\alpha)\leq n+1}\!\!\! s_{\alpha}(\c)\
= \sum_{\alpha\in{\cal A}} s_{\alpha}(\c)
\end{array}
\end{equation}
where ${\cal A}={\cal P}_{-1}$ is the set of partitions conjugate to those in ${\cal C}={\cal P}_{1}$
and use has been made of the fact that  $s_{\alpha}(\c)=0$
for all $\alpha\in{\cal A}$ of length $\ell(\alpha)>n+1$ since $\c=(c_1,c_2,\ldots,c_{n+1})$ has
no more than $n+1$ components. 

Finally, from (\ref{eqn-ACalphagamma}) it follows that
\begin{equation}\label{eqn-edet-alpha}
\det_{1\leq k,\ell\leq n}\left( e_{k-\ell}(\c) + (-1)^{\ell-1} e_{k+\ell}(\c)\right) =  \prod_{1\leq i<j\leq n+1}  (1+c_ic_j) \,,
\end{equation}
as required, to complete the proof of Lemma~\ref{lem-A}.
\end{proof}

This in turn provides the required dependence on $\c$ of the right hand side of 
(\ref{eqn-detm-A}), thereby completing the proof of Lemma~\ref{lem-detm-A}.
\end{proof}

\end{document}